\RequirePackage{fix-cm}
\documentclass[smallextended,envcountsect,envcountsame]{svjour3}

\smartqed

\usepackage[utf8]{inputenc} 
\usepackage[T1]{fontenc}    
\usepackage{url}            
\usepackage{booktabs}       
\usepackage{algorithm}
\usepackage{algpseudocode}
\usepackage{algorithmicx}
\usepackage{amsfonts}       
\usepackage{nicefrac}       
\usepackage{microtype}      
\usepackage{xcolor}         
\usepackage{graphicx}
\usepackage{tikz}
\usepackage{tikz-3dplot}
\usetikzlibrary{calc,intersections,patterns,decorations.pathreplacing,shapes.geometric,arrows.meta}

\usepackage[numbers,sort&compress]{natbib}
\usepackage{amsmath,amssymb}
\usepackage[colorlinks=true,allcolors=blue]{hyperref}
\usepackage[capitalize,noabbrev]{cleveref}

\newcommand{\T}{\mathsf{T}}

\newcommand{\set}[1]{\left\{#1\right\}} 
 
\newcommand{\R}{\mathbb{R}}

\newcommand{\Z}{\mathbb{Z}}
\newcommand{\x}{\mathbf{x}}
\newcommand{\y}{\mathbf{y}}
\newcommand{\w}{\mathbf{w}}
\newcommand{\z}{\mathbf{z}}

\newcommand{\q}{\mathbf{q}}

\renewcommand{\u}{\mathbf{u}}
\renewcommand{\v}{\mathbf{v}}
\renewcommand{\a}{\mathbf{a}}

\renewcommand{\c}{\mathbf{c}}

\renewcommand{\H}{\mathcal{H}}

\newcommand{\0}{\mathbf{0}}

\DeclareMathOperator{\argmax}{arg\,max} 
\DeclareMathOperator{\intset}{int} 

\DeclareMathOperator{\proj}{proj} 

\newcommand{\dd}{\,\mathrm{d}}

\usepackage{bm}
\newcommand{\1}{\mathbf{1}} 

\definecolor{figblue}{RGB}{0,45,114}
\definecolor{figred}{RGB}{191,47,56}
\definecolor{figgold}{RGB}{218,165,32}
\definecolor{figgreen}{RGB}{34,139,34}

\newcommand{\vol}{\operatorname{vol}}

\journalname{}
\date{}
\makeatletter\def\makeheadbox{}\makeatother

\begin{document}

\title{Linear Threshold for Oertel's Conjecture on the Mixed-Integer Volume}
\titlerunning{Linear Threshold for Oertel's Conjecture}

\author{Hongyu Cheng \and Amitabh Basu}
\institute{Department of Applied Mathematics and Statistics, Johns Hopkins University, Baltimore, MD 21218, USA\\
\email{hongyucheng@jhu.edu, basu.amitabh@jhu.edu}}

\maketitle

\begin{abstract}
Gr{\"u}nbaum's inequality guarantees that the centroid of a convex body has halfspace depth at least $1/e$: every halfspace containing the centroid captures at least a $1/e$ fraction of the body's volume. For mixed-integer convex sets $S=C\cap\left(\Z^n\times\R^d\right)$ where $C$ is a convex body, \citet{oertel2014integer} conjectured that there exists $\y\in S$ such that every closed halfspace $H$ containing $\y$ satisfies $\H_d(S\cap H)\ge \frac{1}{2^n e}\H_d(S)$, where $\H_d(X)$ denotes the $d$-dimensional Hausdorff measure of $X\subseteq \Z^n\times\R^d$. This conjecture is closely connected to complexity bounds for cutting plane methods and information complexity in mixed-integer convex optimization. \citet{basu2017centerpoints} established this conjecture for sets of sufficiently large lattice width, with the required lower bound on lattice width depending exponentially on the dimension. More recently, \citet{cristi2025reducing} reduced this threshold to a polynomial one by assuming that $\proj_{\R^n}(C)$ contains a Euclidean ball of radius at least $1178\, d^2 n^{3/2}$. We prove that when $\proj_{\R^n}(C)$ contains an $\ell_\infty$ ball of radius $k\ge \frac{3e}{2}(n+d)$, which is linear in the dimensions $n$ and $d$, there exists $\y^\star\in S$ such that every closed halfspace $H$ containing $\y^\star$ satisfies
	\[
		\H_d(S\cap H)\ge \left(\frac{1}{e}-\frac{3(n+d)}{2k}\right)\H_d(S).
	\]
In particular, when $k\ge 3e(n+d)$ we obtain $\H_d(S\cap H)\ge \frac{1}{2e}\H_d(S)\ge \frac{1}{2^n e}\H_d(S)$, thus verifying Oertel's conjecture for a significantly larger family of sets than previous results. We also show that this linear scaling is necessary: when the radius is sublinear in the total dimension, the maximum achievable halfspace depth can be arbitrarily small relative to the total mixed-integer volume, so no dimension-independent constant fraction lower bound is possible under such an assumption alone. However, the conjecture remains open in its full generality.

\keywords{Mixed-integer convex optimization \and halfspace depth \and centerpoints \and cutting plane methods \and Gr{\"u}nbaum inequality}
\subclass{90C10 \and 90C25 \and 52A40}
\end{abstract}

\section{Introduction}

Cutting plane methods for convex minimization maintain a localization region and repeatedly refine it using separating halfspaces~\citep{kelley1960cutting,nemirovski1983problem}. Many progress bounds for such methods are stated in terms of volume decrease or related volume based potentials~\citep{vaidya1996new}. This motivates geometric depth guarantees: a halfspace containing a specified point should capture a constant fraction of the volume. Grünbaum's inequality provides such a guarantee for the centroid: any hyperplane through the centroid of a convex body splits it so that each closed side contains at least a $1/e$ fraction of the volume~\citep{grunbaum1960partitions}.

We study a mixed-integer generalization of this depth guarantee. Let $C\subset \R^{n+d}$ be a convex body and let $S=C\cap(\Z^n\times\R^d)$. Following~\citet{oertel2014integer}, we measure the size of $S$ by its mixed-integer volume $\H_d(S)$, the $d$-dimensional Hausdorff measure of $S$ as a subset of the mixed-integer lattice $\Z^n\times\R^d$; or equivalently, the sum $\sum_{\z\in\Z^n}\vol_d\bigl(S\cap(\{\z\}\times\R^d)\bigr)$, where $\vol_d(\cdot)$ denotes the usual $d$-dimensional volume. Oertel conjectured that there exists a point $\y\in S$ such that every closed halfspace $H$ containing $\y$ satisfies $\H_d(S\cap H)\ge \frac{1}{2^n e}\H_d(S)$. The factor $2^n$ is unavoidable in general. In the pure integer case $d=0$, taking $C=[0,1]^n$ gives $S=\{0,1\}^n$, and for every $\y\in S$ there exists a closed halfspace containing $\y$ that captures only one point of $S$, so $\H_0(S\cap H)\le 2^{-n}\H_0(S)$. This scale is also consistent with the combinatorial complexity of the integer lattice: Doignon's theorem shows that the Helly number of $\Z^n$ equals $2^n$~\citep{doignon1973convexity}, and related Helly type bounds appear throughout integer optimization~\citep{averkov2013sfree}. Oertel's conjecture is closely connected to complexity bounds in cutting plane methods and to the information complexity of mixed-integer convex optimization~\citep{basu2017centerpoints,basu2024infocomp}.

Existing results verify the conjecture (stated precisely below as Conjecture~\ref{conj:oertel}) for sets whose projection $K=\proj_{\R^n}(C)$ is large in a quantitative sense. \citet{basu2017centerpoints} established it when $K$ has lattice width at least $\Omega((n+d)^{5/2}n^{n+2})$, a threshold that grows exponentially in $n$. \citet{cristi2025reducing} reduced this to a polynomial threshold by assuming that $K$ contains a Euclidean ball of radius at least $1178\, d^2 n^{3/2}$.

Our main result, Theorem~\ref{thm:main}, establishes a linear threshold. If $K$ contains a translate of $k\mathbb{B}_\infty^n$ with $k\ge \frac{3e}{2}(n+d)$, then there exists $\y^\star\in S$ such that every closed halfspace $H$ containing $\y^\star$ satisfies
\[
	\H_d(S\cap H)\ge \Bigl(\frac{1}{e}-\frac{3(n+d)}{2k}\Bigr)\H_d(S).
\]
When $k\ge 3e(n+d)$, the bound becomes $\H_d(S\cap H)\ge \frac{1}{2e}\H_d(S)$, which implies the conjectured $\frac{1}{2^n e}$ bound for all $n\ge 1$. Corollary~\ref{cor:main} states the corresponding centerpoint guarantee. We also show that a linear threshold is necessary in a precise quantitative sense: Theorem~\ref{thm:linear-necessity-total-dim} and Corollary~\ref{cor:linear-necessity-sublinear} prove that any dimension independent constant fraction guarantee based solely on the existence of a translate of $k\mathbb{B}_\infty^n$ in $K$ requires $k$ to scale linearly in $n+d$.

We compare our $\ell_\infty$ ball condition with the Euclidean ball condition in~\citet{cristi2025reducing}. A Euclidean ball of radius $R$ contains the $\ell_\infty$ ball $(R/\sqrt{n})\mathbb{B}_\infty^n$. Hence, $R\ge \frac{3e}{2}\sqrt{n}\,(n+d)$ suffices to apply Theorem~\ref{thm:main}, and $R\ge 3e\sqrt{n}\,(n+d)$ suffices for the $\frac{1}{2e}$-depth guarantee in Corollary~\ref{cor:main}. Compared to the threshold $R\ge 1178\, d^2 n^{3/2}$ in~\citet{cristi2025reducing}, our Corollary threshold replaces the scaling $d^2 n^{3/2}$ by $\sqrt{n}(n+d)$, which corresponds to dropping the sum of exponents from $3.5$ to $1.5$. It also lowers the leading constant from $1178$ to $3e\approx 8.15$ and replaces quadratic dependence on $d$ by linear dependence. At a high level, our proof compares mixed-integer volume and Lebesgue volume via Minkowski thickening and erosion (Lemma~\ref{lem:integral-thick}). This comparison exploits slice concavity from the Brunn--Minkowski theory~\citep{gardner2002brunn,schneider2014convex} together with a symmetrization argument~\citep{kesavan2006symmetrization}. We then perform a centroid rounding step and conclude using Grünbaum's inequality~\citep{grunbaum1960partitions}.

The remainder of the paper is organized as follows. Section~\ref{sec:notation} fixes notation and states our main results. Section~\ref{sec:comparison} develops the volume comparison tools. Section~\ref{sec:proof} proves the main theorem and corollary. Section~\ref{sec:necessity} establishes the necessity of a linear threshold.

\section{Background and main results}\label{sec:notation}

\subsection{Basic notation}

Throughout, $n,d\in \Z_{>0}$. We write points in $\R^{n+d}=\R^n\times \R^d$ as
\[
	\y=(\z,\x),\qquad \z\in \R^n,\ \x\in \R^d,
\]
and let $\proj_{\R^n}:\R^{n+d}\to \R^n$ be the projection onto the first factor, i.e., $\proj_{\R^n}(\z,\x)=\z$.
For a set $A\subseteq \R^{n+d}$, we write
\[
	\proj_{\R^n}(A):=\set{\proj_{\R^n}(\y):\ \y\in A}\subseteq \R^n.
\]
For $\z\in \R^n$, we define the $\z$-fiber of $A$ by
\[
	A_{\z}:=\set{\x\in \R^d:\ (\z,\x)\in A}\subseteq \R^d.
\]

For the remainder of this paper, fix a convex body $C\subset \R^{n+d}$, i.e., a compact convex set with $\intset(C)\neq \emptyset$, and define the mixed-integer lattice and the feasible mixed-integer set
\[
	\Lambda:=\Z^n\times \R^d,\qquad S:=C\cap \Lambda,\qquad K:=\proj_{\R^n}(C)\subset \R^n.
\]
For any convex body $D\subset \R^{n+d}$, we write
\[
	S_D:=D\cap \Lambda,\qquad K_D:=\proj_{\R^n}(D)\subset \R^n.
\]

We will use the cube
\[
	Q:=[-1/2,1/2]^n,\qquad \mathbb{B}_\infty^n:=\set{\u\in \R^n:\ \|\u\|_\infty\le 1},
\]
so that $\mathbb{B}_\infty^n=2Q$.
For $A\subseteq \R^n$, define the \emph{Minkowski sum} and \emph{erosion} by
\[
	A+Q:=\set{a+\q:\ a\in A,\ \q\in Q},\qquad A\ominus Q:=\set{\z\in \R^n:\ \z+Q\subseteq A}.
\]

\subsection{Mixed-integer volume}

For $m\in \Z_{>0}$, we write $\vol_m$ for the $m$ dimensional Lebesgue measure (with the ambient space clear from context).

For a measurable set $A\subseteq \Lambda$, we define its \emph{mixed-integer volume} as
\begin{equation}\label{eq:Hd-fiber-sum}
	\H_d(A):=\sum_{\z\in \Z^n}\vol_d\bigl(A_{\z}\bigr).
\end{equation}

If $A$ is bounded, then only finitely many $\z\in \Z^n$ yield a nonempty fiber, and hence $\H_d(A)<\infty$.
In particular, $\H_d(S)<\infty$ since $C$ is bounded.

\subsection{Halfspace depth and centerpoints}

A (closed) halfspace in $\R^{n+d}$ is a set of the form
\[
	H=\set{\y\in \R^{n+d}:\ \a^\T \y \ge c},\qquad \a\in \R^{n+d}\setminus\{\0\},\ c\in \R.
\]

\begin{definition}[Halfspace depth and centerpoints]\label{def:depth}
	For $\y\in S$, define the halfspace depth of $\y$ with respect to $S$ by
	\[
		h_S(\y):=\inf\set{\H_d(S\cap H):\ H\subset \R^{n+d} \text{ is a closed halfspace and } \y\in H}.
	\]
	The set of centerpoints of $S$ is
	\[
		\mathcal{C}(S):=\argmax_{\y\in S} h_S(\y).
	\]
\end{definition}

For $m\in \Z_{>0}$ and a convex body $D\subset \R^m$, we denote its centroid by
\[
	\mathfrak{c}(D):=\frac{1}{\vol_m(D)}\int_D \u\dd \u.
\]

\noindent
We will use the classical Grünbaum inequality~\citep{grunbaum1960partitions} in the following form.

\begin{theorem}[Grünbaum]\label{thm:grunbaum}
	Let $m\in \Z_{>0}$ and let $D\subset \R^m$ be a convex body with centroid $\mathfrak{c}(D)$.
	For every closed halfspace $H\subset \R^m$ with $\mathfrak{c}(D)\in H$,
	\[
		\vol_m(D\cap H)\ \ge\ \Bigl(\frac{m}{m+1}\Bigr)^m \vol_m(D)\ \ge\ \frac{1}{e}\vol_m(D).
	\]
\end{theorem}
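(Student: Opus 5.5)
This statement is the classical Grünbaum inequality, which the paper cites rather than proves. If I had to prove it, I would use the standard reduction to a one-dimensional problem followed by a comparison with a cone.

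\textbf{Step 1: reduce to a hyperplane through the centroid.} By translating and rotating, I may assume $\mathfrak{c}(D)=\0$. It suffices to treat halfspaces whose boundary passes through the centroid. Indeed, if $H=\{\u:\a^\T\u\ge c\}$ with $c\le 0$, then $H\supseteq\{\u:\a^\T\u\ge 0\}$, so the volume of $D\cap H$ only increases. So take $H=\{u_1\ge 0\}$.

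\textbf{Step 2: pass to one dimension.} Define $f(t)=\vol_{m-1}(D\cap\{u_1=t\})$ on the support $[a,b]$, where $a<0<b$. By Brunn--Minkowski, $f^{1/(m-1)}$ is concave on $[a,b]$. The centroid condition becomes $\int_a^b t f(t)\dd t=0$. The goal is then $\int_0^b f\ge (m/(m+1))^m\int_a^b f$. The case $m=1$ is trivial, since then $0$ is the midpoint of the segment.

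\textbf{Step 3: replace $D$ by a cone.} I would build a rotationally symmetric cone $\tilde D$ with section function $g(t)=\gamma(h-t)^{m-1}$ on $[-s,h]$, fixed by three conditions:
\begin{itemize}
\item $g(0)=f(0)$;
\item $\int_0^h g=\int_0^b f$;
\item $\int_{-s}^0 g=\int_a^0 f$.
\end{itemize}
So $\tilde D$ has the same total volume as $D$, and the same volume in $H$.

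Concavity of $f^{1/(m-1)}$ gives the comparison. On $[0,\infty)$, $g^{1/(m-1)}$ is linear and agrees with $f^{1/(m-1)}$ at $0$, and the two sides have equal mass. Hence $g-f$ changes sign once, from negative to positive, so positive-side mass of $\tilde D$ moves away from $0$ toward larger $t$. Symmetrically, on the negative side the linear extension lies above $f^{1/(m-1)}$ near $0$ (by concavity) with equal mass. So negative-side mass of $\tilde D$ moves toward $0$, again increasing $t$.

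Both moves only increase the first moment, so $\int t\,g\ge \int t\,f=0$. In other words, the centroid of $\tilde D$ lies at some $t_0\ge 0$, that is, within $H$.

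\textbf{Step 4: compute for the cone.} For a cone of height $\ell$ with apex at the positive end, the volume on the apex side of the hyperplane through its centroid is $(m/(m+1))^m$ times the total. This follows because the centroid sits at distance $\ell/(m+1)$ from the base, so the apex-side piece is a homothetic cone with ratio $m/(m+1)$. Since $0\le t_0$, the slab $\{t\ge 0\}$ contains $\{t\ge t_0\}$. Therefore
\[
\vol_m(D\cap H)=\int_0^h g\ge\int_{t_0}^h g=\Bigl(\frac{m}{m+1}\Bigr)^m\vol_m(D).
\]
Finally, $(1+1/m)^m\le e$ gives the bound $1/e$.

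\textbf{Main obstacle.} The delicate step is Step 3. It requires checking that the two equal-mass replacements shift the centroid in the correct direction, with the single sign change argued carefully from concavity, including at the endpoints $a$ and $b$. It also requires handling degenerate cases: the cone may be truncated, and $s$ could a priori exceed $h$'s natural range, which needs ruling out via the mass comparison.
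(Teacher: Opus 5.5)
The paper does not prove this statement: it quotes Grünbaum's inequality with a citation to \citet{grunbaum1960partitions} and uses it as a black box in the proof of Theorem~\ref{thm:main}. Your argument is the classical proof (Brunn concavity of sections, then comparison with a cone), and it is correct. Citing keeps the paper short. Your route would make the paper self-contained at little cost, because its only nontrivial input, the concavity of $f^{1/(m-1)}$ in Step~2, is exactly Lemma~\ref{lem:brunn-concave} applied with $n=1$ and $d=m-1$.

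The step you flag as delicate goes through without extra case analysis.
\begin{itemize}
\item Since the centroid lies in $\intset(D)$, we have $f(0)>0$. So $h=m\int_0^b f/f(0)$ and $\gamma=f(0)/h^{m-1}$ are well defined, and then $s>0$ is uniquely determined.
\item The body with section function $g$ on $[-s,h]$ is a full cone with apex at $h$ and base at $-s$, so no truncation arises.
\item Let $\phi(t):=f(t)^{1/(m-1)}-\gamma^{1/(m-1)}(h-t)$. It is concave on $[a,b]$ with $\phi(0)=0$, so $\phi(t)/t$ is nonincreasing on $[a,b]\setminus\{0\}$.
\item Equality of the positive-side masses forces $\phi(t')\ge 0$ for some $t'\in(0,b]$. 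Otherwise $f<g$ on $(0,b]$, which forces $b<h$ and $\int_0^b f<\int_0^h g$.
\item Let $t^*:=\sup\{t\in(0,b]:\phi(t)\ge 0\}$. Then $g-f\le 0$ on $(0,t^*)$ and $g-f\ge 0$ on $(t^*,\infty)$.
\item Monotonicity of $\phi(t)/t$ also gives $\phi\le 0$ on $[a,0]$, i.e.\ $f(t)\le\gamma(h-t)^{m-1}$ there. Together with the negative-side mass equality this yields $-s\ge a$. Hence $g-f\ge 0$ on $[-s,0]$ and $g-f\le 0$ on $(-\infty,-s)$.
\end{itemize}
Using the two mass equalities,
\[
\int_0^\infty t(g-f)\dd t=\int_0^\infty (t-t^*)(g-f)\dd t\ge 0,\qquad \int_{-\infty}^0 t(g-f)\dd t=\int_{-\infty}^0 (t+s)(g-f)\dd t\ge 0.
\]
So $t_0\ge 0$, and Step~4 finishes the proof.
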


\citet{oertel2014integer} conjectured the following mixed-integer analogue, with the additional factor $1/2^n$ reflecting the Helly number of $\Z^n$.

\begin{conj}[Oertel's conjecture]\label{conj:oertel}
	For any convex body $C\subset \R^{n+d}$, there exists a point $\y\in S=C\cap(\Z^n\times\R^d)$ such that for every closed halfspace $H$ containing $\y$,
	\[
		\H_d(S\cap H)\ge \frac{1}{2^n e}\H_d(S).
	\]
\end{conj}

\subsection{Main results}

Our main result verifies Conjecture~\ref{conj:oertel} under a linear threshold condition on the $\ell_\infty$ radius of the projection.

\begin{theorem}\label{thm:main}
	Let $C\subset \R^{n+d}$ be a convex body, and let $S=C\cap\left(\Z^n\times \R^d\right)$. Write $K=\proj_{\R^n}(C)\subset \R^n$.
	Assume that there exist $\z_0\in \R^n$ and $k>0$ such that
	\begin{equation}\label{eq:big-box}
		\z_0+k\mathbb{B}_\infty^n \subseteq K,\qquad k\ge \frac{3e}{2}(n+d).
	\end{equation}
	Then there exists $\y^\star\in S$ such that for every closed halfspace $H\subseteq \R^{n+d}$ with $\y^\star\in H$,
	\begin{equation}\label{eq:main-bound}
		\H_d(S\cap H)\ \ge\ \Bigl(\frac{1}{e}-\frac{3(n+d)}{2k}\Bigr)\H_d(S).
	\end{equation}
\end{theorem}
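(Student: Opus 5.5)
Our plan is to compare the mixed-integer volume of $S\cap H$ with the Lebesgue volume of suitable thickened and eroded convex bodies, and then apply Theorem~\ref{thm:grunbaum} to a continuous body whose centroid we round to a point of $S$.

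\textbf{Step 1: two-sided comparison with Lebesgue volume.} For a convex body $D\subseteq\R^{n+d}$, write $D\ominus(Q\times\{\0\})$ for the body whose $\z$-fibers are $\bigcap_{\q\in Q}D_{\z+\q}$, and $D+(Q\times\{\0\})$ for the thickening. We aim to show
\[
\vol_{n+d}\bigl(D\ominus(Q\times\{\0\})\bigr)\ \le\ \H_d(S_D)\ \le\ \vol_{n+d}\bigl(D+(Q\times\{\0\})\bigr).
\]
The upper bound comes from assigning to each lattice slice $\{\z\}\times D_\z$ the prism $(\z+Q)\times D_\z$, which lies in the thickening; these prisms are disjoint up to boundaries. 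For the lower bound we use that $\z\mapsto\vol_d(D_\z)^{1/d}$ is concave (Brunn--Minkowski). On each unit cube $\z+Q$ with $\z\in\Z^n$, the eroded fiber at any point of the cube is contained in $D_\z$. Hence the eroded body's volume is at most $\sum_\z\vol_d(D_\z)$.

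\textbf{Step 2: relating thickening and erosion to dilation.} Translate so that $\z_0=\0$ and the centroid-relevant point sits inside the big box. Because $k\mathbb{B}^n_\infty\subseteq K$, we have $Q\times\{\0\}\subseteq \frac{1}{2k}(C-\y_1)$ for some $\y_1\in C$ projecting to $\0$. Convexity then gives
\[
C+(Q\times\{\0\})\subseteq \y_1+\Bigl(1+\tfrac1{2k}\Bigr)(C-\y_1),\qquad C\ominus(Q\times\{\0\})\supseteq \y_1+\Bigl(1-\tfrac1{2k}\Bigr)(C-\y_1).
\]
The same inclusions hold for $C\cap H$, which keeps enough of the box. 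This is where we expect the main obstacle: a halfspace $H$ may cut $C$ so that the projection of $C\cap H$ contains no large box. We plan to handle this with a symmetrization/slice-concavity argument. We will work with the erosion of $C$ intersected with $H$, and use $(C\ominus(Q\times\{\0\}))\cap H\subseteq (C\cap H)\ominus(Q\times\{\0\})$ up to a shift of $H$ by at most $\tfrac12\|\a_{\z}\|_1$. We then absorb that shift by choosing $\y^\star$ well. Volume ratios then give factors $(1\pm\frac1{2k})^{n+d}$, and $(1+\frac1{2k})^{n+d}-(1-\frac1{2k})^{n+d}\lesssim \frac{3(n+d)}{2k}\cdot(\text{const})$ under $k\ge\frac{3e}{2}(n+d)$.

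\textbf{Step 3: centroid rounding and Grünbaum.} Let $E=C\ominus(Q\times\{\0\})$ (or a dilate of $C$) with centroid $\c=(\z_c,\x_c)$. Round $\z_c$ to a nearest integer point $\z^\star$ and set $\y^\star=(\z^\star,\x_c)$. This point lies in $C$, since $\c+(Q\times\{\0\})\subseteq C$, so $\y^\star\in S$. Any closed $H\ni\y^\star$ has a translate $H'\subseteq H+\text{(shift by }Q)$ containing $\c$. Grünbaum gives $\vol(E\cap H')\ge\frac1e\vol(E)$. We then chain: $\H_d(S\cap H)\ge\vol((C\cap H)\ominus Q)\ge\vol(E\cap H')\ge\frac1e\vol(E)$, and $\vol(E)\ge(1-\frac{3(n+d)}{2k}e)\,\vol(C+Q)\ge\ldots\H_d(S)$. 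Finally we tune the constants so that the loss is exactly $\frac{3(n+d)}{2k}$, using $(1-t)^{m}\ge 1-mt$ and $(1+t)^{-m}\ge1-mt$.
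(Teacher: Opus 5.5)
There is a genuine gap. In fact there are two, and they sit exactly where the paper needs its main ideas.

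\textbf{Step 2 assumes a flat box that need not exist.} You claim $Q\times\{\0\}\subseteq\frac{1}{2k}(C-\y_1)$ for some $\y_1\in C$, i.e., that $C$ contains a \emph{flat} box $\y_1+(2kQ\times\{\0\})$. This does not follow from $\z_0+k\mathbb{B}_\infty^n\subseteq K$. For $n=d=1$, take the slanted slab $C=\{(z,x):0\le z\le 2k,\ |x-z|\le\delta\}$ with $\delta<\tfrac12$. Its projection is $[0,2k]$, yet $C\ominus(Q\times\{\0\})=\emptyset$. Also $\vol_2(C+(Q\times\{\0\}))\ge(2k-1)(1+2\delta)$ while $\vol_2(C)=4k\delta$. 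So neither dilation inclusion holds, and the body $E$ of Step~3 may have no centroid at all.

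Your Step~1 prism sandwich is itself correct, and its lower half needs no Brunn--Minkowski. It only pays off after fiberwise Schwarz symmetrization. Replacing each fiber $C_{\z}$ by a centered ball of the same $d$-volume preserves $\vol_{n+d}$ and $\H_d$, stays convex by Brunn concavity (Lemma~\ref{lem:brunn-concave}), and does contain $(\z_0+2kQ)\times\{\0\}$. Applied to that body, your sandwich gives Lemma~\ref{lem:disc-cont} more directly than the paper's layer-cake route. However, symmetrization does not commute with intersecting by $H$, so it cannot rescue an argument that erodes $C\cap H$ itself.

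\textbf{The Gr\"unbaum step fails even when $E\neq\emptyset$.} Erosion distributes over intersections, so $(C\cap H)\ominus(Q\times\{\0\})=E\cap H'$ exactly, with $H'=\{\y:\a^\T\y\ge c+\tfrac12\|\a_{\z}\|_1\}$. Taking $\y^\star\in\partial H$, this is $H$ pushed \emph{away} from $\y^\star$. Meanwhile $\a^\T\c=\a^\T\y^\star+\a_{\z}^\T(\z_c-\z^\star)$ can be as small as $c-\tfrac12\|\a_{\z}\|_1$. So $\c\notin H'$ in general, with a deficit of up to $\|\a_{\z}\|_1$ in $\a$-value, and Theorem~\ref{thm:grunbaum} cannot be applied to $E\cap H'$. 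The translate of $H$ that does contain $\c$ is larger than $H\ominus(Q\times\{\0\})$, so your second inequality then points the wrong way.

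No choice of $\y^\star$ can ``absorb'' the shift. You would need $\a_{\z}^\T(\z_c-\z^\star)\ge\tfrac12\|\a_{\z}\|_1$ for both $\a$ and $-\a$. A quantitative Gr\"unbaum with a shifted hyperplane would need the width of $E$ in direction $\a$ to be $\gtrsim k\|\a_{\z}\|_1$, which again fails for slanted bodies.

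\textbf{How the paper avoids both problems.} It makes a centroid \emph{exactly} mixed-integer by shrinking: $C'=(1-\epsilon)C+\epsilon\w^\star$ with $\epsilon=1/(2k)$, where $\w^\star$ is chosen so that $(1-\epsilon)\z_C+\epsilon\proj_{\R^n}(\w^\star)\in\Z^n$. This is possible because $\epsilon K$ contains a translate of $Q$ (Lemma~\ref{lem:centroid-rounding}), and Gr\"unbaum then applies to $C'$ with no shift. It also never erodes a cut body. Lemma~\ref{lem:half-bisect} shows that one of $C'\cap H$ or $C'\cap\bar H$ contains a translate of $\tfrac12 C'$, so Lemma~\ref{lem:disc-cont} applies to that piece. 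When the thick piece is $C'\cap H$ this is used directly. Otherwise the paper uses $\H_d(C'\cap H\cap\Lambda)\ge\H_d(C'\cap\Lambda)-\H_d(C'\cap\bar H\cap\Lambda)$.
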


If in addition $k\ge 3e(n+d)$, then the coefficient in \eqref{eq:main-bound} satisfies $\frac{1}{e}-\frac{3(n+d)}{2k}\ge \frac{1}{2e}$, and hence \eqref{eq:main-bound} yields $\H_d(S\cap H)\ge \frac{1}{2e}\,\H_d(S)$ for every closed halfspace $H\subseteq \R^{n+d}$ with $\y^\star\in H$.
As $\frac{1}{2e}\ge \frac{1}{2^n e}$ for all $n\ge 1$, this verifies Conjecture~\ref{conj:oertel} for all sets satisfying \eqref{eq:big-box} with $k\ge 3e(n+d)$.
We summarize the corresponding centerpoint implication in Corollary~\ref{cor:main}.

\begin{corollary}\label{cor:main}
	Assume the hypotheses of Theorem~\ref{thm:main}, and suppose in addition that $k\ge 3e(n+d)$.
	Then $\mathcal{C}(S)\neq \emptyset$, and every $\hat \y\in \mathcal{C}(S)$ satisfies
	\begin{equation}\label{eq:centerpoint-bound}
		h_S(\hat \y)\ \ge\ \frac{1}{2e}\,\H_d(S)\ \ge\ \frac{1}{2^n e}\,\H_d(S).
	\end{equation}
\end{corollary}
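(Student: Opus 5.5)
The plan is to combine Theorem~\ref{thm:main} with a compactness argument for the existence of a maximizer of $h_S$. The only nontrivial part is showing that $h_S$ attains its supremum on $S$; the depth bound then follows immediately.

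\emph{Step 1: $S$ is nonempty and compact.} Since $C$ is bounded, only finitely many $\z\in\Z^n$ have $C_{\z}\neq\emptyset$. Hence $S=\bigcup_{\z}\{\z\}\times C_{\z}$ is a finite union of compact sets, and so it is compact. It is nonempty because Theorem~\ref{thm:main} supplies a point $\y^\star\in S$. Note that $0\le h_S\le \H_d(S)<\infty$.

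\emph{Step 2: $h_S$ is upper semicontinuous on $S$.} Fix $\y\in S$ and $\varepsilon>0$. Choose a closed halfspace $H=\{\u:\a^\T\u\ge c\}$ with $\y\in H$ and $\H_d(S\cap H)<h_S(\y)+\varepsilon$. For $\delta>0$ set $H_\delta=\{\u:\a^\T\u\ge c-\delta\}$.
\begin{itemize}
\item As $\delta\downarrow 0$, the sets $S\cap H_\delta$ decrease to $S\cap H$.
\item Each fiber measure $\vol_d((S\cap H_\delta)_{\z})$ therefore decreases to $\vol_d((S\cap H)_{\z})$, by continuity from above of a finite measure.
\item Only finitely many $\z$ are involved, so $\H_d(S\cap H_\delta)\to\H_d(S\cap H)$.
\end{itemize}
Fix $\delta$ with $\H_d(S\cap H_\delta)<h_S(\y)+\varepsilon$. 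Every $\y'\in S$ with $\|\y'-\y\|<\delta/\|\a\|$ satisfies $\a^\T\y'>c-\delta$, so $\y'\in H_\delta$. Consequently $h_S(\y')\le \H_d(S\cap H_\delta)<h_S(\y)+\varepsilon$. This gives $\limsup_{\y'\to\y}h_S(\y')\le h_S(\y)$.

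\emph{Step 3: existence of centerpoints and the bound.} An upper semicontinuous function on a nonempty compact set attains its maximum, so $\mathcal{C}(S)\neq\emptyset$. Now let $\hat\y\in\mathcal{C}(S)$. By maximality and Theorem~\ref{thm:main},
\[
h_S(\hat\y)\ge h_S(\y^\star)\ge \Bigl(\frac1e-\frac{3(n+d)}{2k}\Bigr)\H_d(S).
\]
The assumption $k\ge 3e(n+d)$ gives $\frac{3(n+d)}{2k}\le\frac{1}{2e}$, so the coefficient is at least $\frac1{2e}$. Finally, $\frac{1}{2e}\ge\frac{1}{2^ne}$ for $n\ge1$, which yields \eqref{eq:centerpoint-bound}.

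The main (mild) obstacle is Step~2, specifically justifying the limit of measures under the slightly enlarged halfspaces. This is handled by the fiberwise continuity argument above, which relies on the finiteness of the sum in \eqref{eq:Hd-fiber-sum}.
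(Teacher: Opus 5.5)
Your proof is correct and follows essentially the same route as the paper: the depth bound comes from Theorem~\ref{thm:main} at $\y^\star$, and $\mathcal{C}(S)\neq\emptyset$ follows from upper semicontinuity of $h_S$ on the compact set $S$, which rests on continuity from above of $\H_d(S\cap\cdot)$ along slightly enlarged halfspaces. The only difference is cosmetic. You prove upper semicontinuity directly, using a near-optimal halfspace $H$ and its enlargement $H_\delta$. The paper instead writes $h_S=\inf_{\u}\phi_{\u}$ via Lemma~\ref{lem:boundary-reduction} and uses that an infimum of upper semicontinuous functions is upper semicontinuous, so your version does not need the boundary reduction.
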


We also show that a linear $\ell_\infty$ radius in the total dimension $n+d$ is necessary if one seeks a dimension-free constant fraction guarantee based only on the existence of a translate of $k\mathbb{B}_\infty^n$ in the projection.
See Theorem~\ref{thm:linear-necessity-total-dim} and Corollary~\ref{cor:linear-necessity-sublinear}.

Theorem~\ref{thm:main} remains valid if the axis aligned cube $k\mathbb{B}_\infty^n$ is replaced by a unimodular image $kU\mathbb{B}_\infty^n$. This relaxes \eqref{eq:big-box} by allowing the cube to be aligned with an arbitrary integer basis of $\Z^n$.
\begin{corollary}\label{cor:unimodular}
	Let $C\subset \R^{n+d}$ be a convex body with $S:=C\cap\left(\Z^n\times \R^d\right)$ and $K:=\proj_{\R^n}(C)$.
	Assume that there exist $\z_0\in \R^n$, $k\ge \frac{3e}{2}(n+d)$, and an integer matrix $U\in\Z^{n\times n}$ with $|\det U|=1$ such that
	\begin{equation}\label{eq:big-box-unimod}
		\z_0+k\,U\mathbb{B}_\infty^n\subseteq K.
	\end{equation}
	Then there exists $\y^\star\in S$ such that for every closed halfspace $H\subseteq \R^{n+d}$ with $\y^\star\in H$,
	\[
		\H_d(S\cap H)\ \ge\ \Bigl(\frac{1}{e}-\frac{3(n+d)}{2k}\Bigr)\H_d(S).
	\]
\end{corollary}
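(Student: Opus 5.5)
The plan is to reduce Corollary~\ref{cor:unimodular} to Theorem~\ref{thm:main} by a unimodular change of coordinates on the integer part that preserves the mixed-integer lattice, the mixed-integer volume, and the family of closed halfspaces. Define the linear map $T:\R^{n+d}\to\R^{n+d}$ by $T(\z,\x):=(U^{-1}\z,\x)$. Since $U\in\Z^{n\times n}$ and $|\det U|=1$, the inverse $U^{-1}$ is also an integer matrix, so $U^{-1}\Z^n=\Z^n$. Hence $T(\Lambda)=\Lambda$, and $T$ is an invertible linear map that acts as the identity on the continuous coordinates.

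Set $C':=T(C)$. This is a convex body, and $S':=C'\cap\Lambda=T(S)$. Its projection is $K':=\proj_{\R^n}(C')=U^{-1}K$. Applying $U^{-1}$ to \eqref{eq:big-box-unimod} gives
\[
U^{-1}\z_0+k\mathbb{B}_\infty^n\subseteq K',
\]
so $C'$ satisfies \eqref{eq:big-box} with the same $k$ and the center $U^{-1}\z_0$. Theorem~\ref{thm:main} then provides a point $\y'\in S'$ such that $\H_d(S'\cap H')\ge(\frac1e-\frac{3(n+d)}{2k})\H_d(S')$ for every closed halfspace $H'\ni\y'$. Take $\y^\star:=T^{-1}\y'\in S$.

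It remains to transfer the bound back to $S$, which needs two checks.

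First, $T$ preserves mixed-integer volume on subsets $A\subseteq\Lambda$. For each $\z\in\Z^n$, the fiber satisfies $(T(A))_{U^{-1}\z}=A_{\z}$, because $T$ leaves $\x$ unchanged. Since $\z\mapsto U^{-1}\z$ is a bijection of $\Z^n$, reindexing the sum in \eqref{eq:Hd-fiber-sum} gives $\H_d(T(A))=\H_d(A)$.

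Second, $T$ maps closed halfspaces bijectively onto closed halfspaces. Explicitly, $T(\{\y:\a^\T\y\ge c\})=\{\y:(T^{-\T}\a)^\T\y\ge c\}$, and $T^{-\T}\a\neq\0$ whenever $\a\neq\0$.

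Now let $H$ be any closed halfspace with $\y^\star\in H$. Then $H':=T(H)$ is a closed halfspace containing $\y'$, and $T(S\cap H)=S'\cap H'$. Therefore
\[
\H_d(S\cap H)=\H_d(S'\cap H')\ge\Bigl(\tfrac1e-\tfrac{3(n+d)}{2k}\Bigr)\H_d(S')=\Bigl(\tfrac1e-\tfrac{3(n+d)}{2k}\Bigr)\H_d(S),
\]
which is the claimed bound.

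The proof has no real obstacle. The only points needing care are that $U^{-1}$ is integral, which follows from Cramer's rule and $|\det U|=1$, and that the fiber reindexing is exact. Both hold because $T$ is block-diagonal, acting only on the integer coordinates.
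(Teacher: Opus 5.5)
Your proposal is correct and follows essentially the same route as the paper: the same map $(\z,\x)\mapsto(U^{-1}\z,\x)$, Theorem~\ref{thm:main} applied to the transformed body, and a transfer back using fiber reindexing over $\Z^n$ and the correspondence of closed halfspaces. Your explicit formula $T(\{\y:\a^\T\y\ge c\})=\{\y:(T^{-\T}\a)^\T\y\ge c\}$ just spells out a step the paper asserts without computation.
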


\section{A comparison lemma for mixed-integer volume}\label{sec:comparison}

The goal of this section is to prove the following comparison inequality, which relates the mixed-integer volume $\H_d(S_D)$ to the Lebesgue volume $\vol_{n+d}(D)$ when the projection of $D$ contains a large $\ell_\infty$ ball.

\begin{lemma}[Comparison inequality]\label{lem:disc-cont}
	Let $D\subset \R^{n+d}$ be a convex body and recall $K_D:=\proj_{\R^n}(D)\subset \R^n$ and $S_D:=D\cap\left(\Z^n\times \R^d\right)$.
	Assume that there exist $\z_D\in \R^n$ and $k>1/2$ such that
	\begin{equation}\label{eq:k-box}
		\z_D+k\mathbb{B}_\infty^n\subseteq K_D.
	\end{equation}
	Then we have
	\begin{equation}\label{eq:disc-cont-bound}
		\Bigl(1-\frac{1}{2k}\Bigr)^{n+d}\vol_{n+d}(D)\ \le\ \H_d(S_D)\ \le\ \Bigl(1+\frac{1}{2k}\Bigr)^{n+d}\vol_{n+d}(D).
	\end{equation}
\end{lemma}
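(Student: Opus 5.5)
The plan is to compare the lattice sum $\H_d(S_D)=\sum_{\z\in\Z^n}\vol_d(D_\z)$ with the integral $\vol_{n+d}(D)=\int_{K_D}\vol_d(D_\z)\dd\z$ by attaching to each integer point $\z$ the unit cube $\z+Q$. Write $f(\z):=\vol_d(D_\z)$ for $\z\in K_D$. By Brunn--Minkowski, $f^{1/d}$ is concave on $K_D$. For the upper bound I would use the thickening
\[
D^+:=\z_D'+\lambda(D-\z_D'),\qquad \lambda=1+\tfrac{1}{2k},
\]
where $\z_D'=(\z_D,\x_D)$ is a suitable point of $D$ lying over $\z_D$. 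Since $\z_D+k\mathbb{B}^n_\infty\subseteq K_D$, scaling about $\z_D$ by $\lambda$ gives $\z+\tfrac12\mathbb{B}_\infty^n\subseteq \z_D+\lambda(K_D-\z_D)$ for every $\z\in K_D$. This is the standard fact that a convex set containing a $k$-box, once dilated by $1+\frac1{2k}$, contains $K_D+Q$: indeed $\z+\q=\frac1\lambda\cdot\lambda\z+\dots$, that is, a convex combination of a point of $\lambda$-dilate and of the box. Similarly, the eroded set $\z_D+\mu(K_D-\z_D)$ with $\mu=1-\frac1{2k}$ satisfies $\bigl(\z_D+\mu(K_D-\z_D)\bigr)+Q\subseteq K_D$. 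The hypothesis $k>1/2$ is exactly what makes $\mu>0$.

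The key step, and the one I expect to be the main obstacle, is a fiberwise mean-value inequality. For the upper bound I want
\[
f(\z)\le \int_{\z+Q} f^+(\u)\dd\u
\]
for every integer $\z\in K_D$, where $f^+$ is the fiber-volume function of $D^+$. Summing over $\z$, the cubes $\z+Q$ are disjoint up to null sets, so this gives $\H_d(S_D)\le\vol_{n+d}(D^+)=\lambda^{n+d}\vol_{n+d}(D)$. To prove it I would show that for $\u\in\z+Q$ the fiber $D^+_\u$ contains a translate of a large piece of $D_\z$. Pairing $\u$ with its reflection $2\z-\u$, both lie in $\z+Q$. Concavity of $(f^+)^{1/d}$ along the segment through $\z$, together with $D_\z\subseteq D^+_\z$ after centering appropriately, gives $f^+(\u)^{1/d}+f^+(2\z-\u)^{1/d}\ge 2 f_{D^+}(\z)^{1/d}$. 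Then AM--GM or power-mean arguments yield $\frac12\bigl(f^+(\u)+f^+(2\z-\u)\bigr)\ge f^+(\z)\ge f(\z)$, provided $D_\z\subseteq D^+_\z$ holds up to translation.

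That containment is exactly the delicate point: scaling about $\z_D'$ moves fibers. I would therefore instead choose the thickening $D^+:=\{(\z,\x):\ \z\in\lambda\text{-dilate of } K_D,\ \dots\}$ defined fiberwise by symmetrization. Concretely, I would replace $D$ by its Schwarz-type symmetral in the $\x$-variables, where each fiber $D_\z$ is replaced by a centered ball of the same volume. This preserves $f$, preserves convexity by Brunn--Minkowski, and makes fibers nested under translation-free comparisons. This is where I expect the symmetrization to be needed, since it lets $D^+_\z\supseteq D_\z$ hold literally.

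The lower bound is symmetric. Apply the same reflection-averaging inequality to the eroded body $D^-$, obtained from the symmetral of $D$ by dilating with factor $\mu$ about a point over $\z_D$, whose projection satisfies $K_{D^-}+Q\subseteq K_D$. For each integer $\z$ this gives $\int_{\z+Q}f^-\le$, after using concavity in the opposite direction, $f(\z)$. More precisely, I would bound $\int_{\z+Q}f^-(\u)\dd\u\le \sup_{\u\in\z+Q}f^-(\u)$ and use $D^-_\u\subseteq D_{\z}$ along the appropriate segment. Equivalently, I would apply the upper-bound argument with roles reversed, using $D\supseteq (D^-)^+$ since $\mu\lambda'$ is the correct inverse scaling. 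Summing over integer $\z$ whose cube meets $K_{D^-}$ gives $\mu^{n+d}\vol_{n+d}(D)=\vol_{n+d}(D^-)\le\H_d(S_D)$, which completes the two-sided bound.
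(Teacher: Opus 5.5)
Your upper bound is correct. For integer $\z\in K_D$ you have $\z+Q\subseteq K_{D^+}$. Midpoint concavity of $(f^+)^{1/d}$ together with convexity of $t\mapsto t^d$ gives $f^+(\z)\le\int_{\z+Q}f^+$. Summing over the a.e.\ disjoint cubes then yields $\H_d(S_D)\le\lambda^{n+d}\vol_{n+d}(D)$. The fiber issue you worry about does not arise: for convex $D\ni\z_D'$ and $\lambda\ge 1$ one has $D\subseteq \z_D'+\lambda(D-\z_D')$, so $D_\z\subseteq D^+_\z$ holds literally without symmetrization.

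The gap is in the lower bound. Reflection averaging only ever bounds $\int_{\z+Q}f^-$ from \emph{below}, by $f^-(\z)$. There is no ``opposite direction'' of concavity that yields $\int_{\z+Q}f^-\le f(\z)$. Taken literally, running the upper-bound argument on $D^-$ (whose $\lambda'$-dilate is $D$ again, since $\lambda'\mu=1$) gives $\H_d(S_{D^-})\le\vol_{n+d}(D)$, which is not a lower bound on $\H_d(S_D)$.

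Your ``more precisely'' route, $\int_{\z+Q}f^-\le\sup_{\u\in\z+Q}f^-(\u)$, is the right one. But it rests entirely on $f^-(\u)\le f(\z)$ for all $\u\in\z+Q$, which you assert ``along the appropriate segment'' without proof. The projection inclusion $K_{D^-}+Q\subseteq K_D$ that you recorded says nothing about fiber volumes and does not imply it.

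Here is the missing step. Let $D^-$ be the $\mu$-dilate about a point over $\z_D$, so that $f^-(\u)=\mu^d f(\u')$ with $\u':=\z_D+(\u-\z_D)/\mu$. Since $\z-\u\in Q$,
\[
\z=\mu\,\u'+\epsilon\bigl(\z_D+2k(\z-\u)\bigr),\qquad \epsilon=\frac{1}{2k},\qquad \z_D+2k(\z-\u)\in \z_D+2kQ\subseteq K_D .
\]
Concavity of $f^{1/d}$ on $K_D$ and $f\ge 0$ then give $f(\z)^{1/d}\ge \mu f(\u')^{1/d}$, i.e.\ $f^-(\u)\le f(\z)$; this is trivial if $\u'\notin K_D$. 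This is exactly the content of the paper's inclusion \eqref{eq:contain-erode}.

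With this step supplied, your proof is a valid alternative to the paper's. The paper never compares an integer fiber with a unit cube of fibers. Instead it writes $\H_d(S_D)$ and $\vol_{n+d}(D)$ via layer cake over the superlevel sets $L_s$ of $g$. It then applies the lattice sandwich $\vol_n(L_s\ominus Q)\le\#(L_s\cap\Z^n)\le\vol_n(L_s+Q)$ level by level. Finally it converts the integrated thickened and eroded level sets into $\vol_{n+d}(E^{\pm})$ through the envelopes $g_\pm$, before invoking \eqref{eq:contain-thick} and \eqref{eq:contain-erode}.

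Your cube-by-cube route skips the layer cake, the lattice sandwich and the envelopes, while resting on the same convex-combination inclusions. The same segment trick also makes your reflection step superfluous. Writing $\z_D+(\z+\q-\z_D)/\lambda$ as the convex combination $\frac{1}{\lambda}\z+\frac{\epsilon}{\lambda}(\z_D+2k\q)$ gives $f(\z)\le f^+(\z+\q)$ for every $\q\in Q$. Both bounds then become pointwise fiber comparisons on each cube. Since only fiber volumes enter, you could even drop symmetrization and Brunn--Minkowski altogether: lift each $\z_D+2k\q$ to a point of $D$ and compare translates of fibers using convexity of $D$ alone.
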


\noindent
The key idea is to express both volumes via layer cake decompositions (see, for example, \citep[Theorem 8.16]{rudin1987real}) over the superlevel sets $L_s:=\set{\z\in\R^n:g(\z)\ge s}$, where $g(\z):=\vol_d(D_\z)^{1/d}$ is the ``slice radius'':
\[
	\H_d(S_D)=d\int_0^\infty s^{d-1}\#(L_s\cap\Z^n)\dd s,\qquad
	\vol_{n+d}(D)=d\int_0^\infty s^{d-1}\vol_n(L_s)\dd s.
\]
The second equality is the usual layer cake formula \eqref{eq:layercake-power}, and the first identity follows from \eqref{eq:Hd-fiber-sum} with the same layer cake argument for counting measure (derived in \eqref{eq:nu-layer}).
A lattice sandwich lemma (Lemma~\ref{lem:lattice-sandwich}) then bounds the lattice point count by the Lebesgue measure of thickened and eroded level sets, where we recall that $Q=[-1/2,1/2]^n$:
\[
	\vol_n(L_s\ominus Q)\ \le\ \#(L_s\cap \Z^n)\ \le\ \vol_n(L_s+Q).
\]
The main technical ingredient (Lemma~\ref{lem:integral-thick}) controls the effect of thickening and erosion on these volumes via a Brunn--Minkowski based concavity argument for $g$ and a scaling argument:
\[
	d\int_0^\infty s^{d-1}\vol_n(L_s+Q)\dd s \le \Bigl(1+\frac{1}{2k}\Bigr)^{n+d}
	d\int_0^\infty s^{d-1}\vol_n(L_s)\dd s,
\]
with an analogous lower bound for erosion. Combining these ingredients yields the comparison inequality in Lemma~\ref{lem:disc-cont}.

The remainder of this section develops these tools. Section~\ref{subsec:layer-cake} establishes the layer cake representations and the lattice sandwich lemma. Section~\ref{subsec:concavity} proves the concavity of slice radii via Brunn--Minkowski. Section~\ref{subsec:thickening} proves the integrated thickening/erosion bounds. Section~\ref{subsec:comparison-proof} assembles the proof of Lemma~\ref{lem:disc-cont}.

\subsection{Layer cake decomposition}\label{subsec:layer-cake}

\begin{lemma}[Lattice sandwich]\label{lem:lattice-sandwich}
	Let $A\subseteq \R^n$ be closed. Then
	\begin{equation}\label{eq:lattice-sandwich}
		\vol_n(A\ominus Q)\ \le\ \#(A\cap \Z^n)\ \le\ \vol_n(A+Q).
	\end{equation}
\end{lemma}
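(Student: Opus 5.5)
The plan is to compare the counting measure on $\Z^n$ with Lebesgue measure using the translates $\z+Q$, $\z\in\Z^n$. These cubes tile $\R^n$ up to boundaries: they have pairwise disjoint interiors, their union is $\R^n$, and each has volume one. Since the overlaps are measure-zero boundary sets, for any measurable $B\subseteq\R^n$ we get $\vol_n(B)=\sum_{\z\in\Z^n}\vol_n\bigl(B\cap(\z+Q)\bigr)$. Both inequalities will follow from this identity together with a pointwise comparison of each cube with the relevant set.

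\textbf{Upper bound.} For each $\z\in A\cap\Z^n$, we have $\z+Q\subseteq A+Q$ directly from the definition of the Minkowski sum. The cubes $\z+Q$ for distinct $\z\in A\cap\Z^n$ overlap only in null sets. Hence
\[
\#(A\cap\Z^n)=\sum_{\z\in A\cap\Z^n}\vol_n(\z+Q)=\vol_n\Bigl(\bigcup_{\z\in A\cap\Z^n}(\z+Q)\Bigr)\le\vol_n(A+Q).
\]
If $A\cap\Z^n$ is infinite, the same argument applied to finite subfamilies shows that $\vol_n(A+Q)=\infty$, so the inequality still holds. Measurability of $A+Q$ is not an issue: it is a union of translates of the closed set $A$, and for closed $A$ the set $A+Q$ is closed, because $Q$ is compact.

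\textbf{Lower bound.} We first check that $A\ominus Q=\bigcap_{\q\in Q}(A-\q)$ is closed, being an intersection of closed sets, and hence measurable. The key pointwise claim is the following: if $(A\ominus Q)\cap(\z+Q)$ has positive measure for some $\z\in\Z^n$, then $\z\in A$. Indeed, pick any $\w\in(A\ominus Q)\cap(\z+Q)$. Then $\z-\w\in -Q=Q$, so $\z=\w+(\z-\w)\in\w+Q\subseteq A$. Consequently every cube $\z+Q$ meeting $A\ominus Q$ at all has its center in $A$. Using the tiling identity,
\[
\vol_n(A\ominus Q)=\sum_{\z\in\Z^n}\vol_n\bigl((A\ominus Q)\cap(\z+Q)\bigr)\le\sum_{\z\in A\cap\Z^n}1=\#(A\cap\Z^n).
\]

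There is no real obstacle in this argument. The only points needing care are measure-theoretic: the tiling identity, which uses that the boundaries of the cubes are null and that the family of cubes is countable; the measurability and closedness of $A+Q$ and $A\ominus Q$; and the case of infinite counts. The symmetry $Q=-Q$ is the small observation that makes the erosion step work.
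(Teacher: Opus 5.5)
Your proof is correct and follows essentially the same route as the paper: for the upper bound the paper uses the pairwise disjoint open cubes $\z+\intset(Q)$ where you use closed cubes with null overlaps, and for the lower bound your observation that any cube $\z+Q$ meeting $A\ominus Q$ has $\z\in A$ is the paper's covering $A\ominus Q\subseteq\bigcup_{\z\in A\cap\Z^n}(\z+Q)$. After that covering, subadditivity (the ``$\le$'' half of your tiling identity) finishes the argument, so the differences are purely cosmetic.
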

\begin{proof}
	Since $A$ is closed and $Q$ is compact, the sets $A\ominus Q$ and $A+Q$ are closed and hence Borel measurable.

	For the left inequality, consider the union
	\[
		U:=\bigcup_{\z\in A\cap \Z^n}(\z+Q).
	\]
	We claim that $A\ominus Q\subseteq U$. Fix $\zeta\in A\ominus Q$. Then $\zeta+Q\subseteq A$.
	Since $Q=[-1/2,1/2]^n$, there exists $\z\in \Z^n$ with $\z\in \zeta+Q$ (choose $\z_i$ by rounding $\zeta_i$ to the nearest integer point).
	As $\zeta+Q\subseteq A$, we obtain $\z\in A\cap \Z^n$, and also $\zeta\in \z+Q$. This shows $\zeta\in U$.

	By subadditivity of Lebesgue measure and the fact that $\vol_n(Q)=1$,
	\[
		\vol_n(A\ominus Q)\le \vol_n(U)\le \sum_{\z\in A\cap \Z^n}\vol_n(\z+Q)= \#(A\cap \Z^n).
	\]

	For the right inequality, note that for every $\z\in A\cap \Z^n$,
	\[
		\z+\intset(Q) \subseteq A+Q,
	\]
	since $\z\in A$ and $\intset(Q)\subseteq Q$. The sets $\{\z+\intset(Q):\ \z\in \Z^n\}$ are pairwise disjoint.
	Therefore,
	\[
		\#(A\cap \Z^n) = \sum_{\z\in A\cap \Z^n}\vol_n(\intset(Q)) = \vol_n\Bigl(\bigcup_{\z\in A\cap \Z^n}(\z+\intset(Q))\Bigr) \le \vol_n(A+Q),
	\]
	which completes the proof.
\end{proof}

Lemma~\ref{lem:lattice-sandwich} is the bridge between lattice point counting and continuous volume. The Minkowski sum $A+Q$ and erosion $A\ominus Q$ provide upper and lower bounds on the lattice point count in terms of Lebesgue measure. When $A$ is sufficiently large compared to $Q$, the difference between $\vol_n(A+Q)$ and $\vol_n(A\ominus Q)$ is small relative to $\vol_n(A)$, making the bounds tight.
See, for example, \citet{beck2015computing}.

\begin{lemma}[Layer cake]\label{lem:layer-cake}
	Let $g:\R^n\to [0,\infty]$ be Lebesgue measurable and let $d\in \Z_{>0}$. Then
	\begin{equation}\label{eq:layercake-power}
		\int_{\R^n} g(\z)^d\dd \z=d\int_0^\infty s^{d-1}\vol_n\bigl(\set{\z:\ g(\z)\ge s}\bigr)\dd s.
	\end{equation}
\end{lemma}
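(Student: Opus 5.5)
The plan is to prove \eqref{eq:layercake-power} by writing the integrand pointwise as an integral and then exchanging the order of integration with Tonelli's theorem. For every $t\in[0,\infty]$ and $d\in\Z_{>0}$ we have the elementary identity
\[
	t^d=\int_0^t d\,s^{d-1}\dd s=\int_0^\infty d\,s^{d-1}\,\1\{s\le t\}\dd s .
\]
For finite $t$ this is the fundamental theorem of calculus, and for $t=\infty$ both sides equal $+\infty$. Substituting $t=g(\z)$ gives the pointwise representation
\[
	g(\z)^d=\int_0^\infty d\,s^{d-1}\,\1\{g(\z)\ge s\}\dd s ,\qquad \z\in\R^n .
\]

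Next I integrate over $\z\in\R^n$ and swap the two integrals. Tonelli's theorem applies because the integrand is nonnegative. It is also jointly measurable on $\R^n\times(0,\infty)$: the set $E:=\set{(\z,s):\ g(\z)\ge s}$ is the preimage of the closed set $\set{(a,b)\in[0,\infty]\times\R:\ a\ge b}$ under the measurable map $(\z,s)\mapsto(g(\z),s)$, so $E$ is product-measurable. After the swap, the inner integral over $\z$ is $\vol_n(\set{\z:\ g(\z)\ge s})$. What remains is exactly $d\int_0^\infty s^{d-1}\vol_n(L_s)\dd s$, with both sides allowed to be $+\infty$.

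The only point needing care is this measurability and Tonelli step, especially the handling of the value $+\infty$ and of possibly infinite-measure superlevel sets. Both are absorbed by working in $[0,\infty]$ throughout, so I expect no genuine obstacle.

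Finally, the same computation with counting measure on $\Z^n$ in place of $\vol_n$ gives the discrete identity $\sum_{\z\in\Z^n} g(\z)^d=d\int_0^\infty s^{d-1}\#(L_s\cap\Z^n)\dd s$, since Tonelli holds for counting measure as well. I would record this here for later use with $g(\z)=\vol_d(D_\z)^{1/d}$.
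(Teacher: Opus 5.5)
Your argument is correct and is essentially the paper's proof: a pointwise integral representation of $g(\z)^d$ followed by a Tonelli swap. The differences are cosmetic. You build the weight $d\,s^{d-1}$ into the pointwise identity, exactly as the paper later does for counting measure in \eqref{eq:nu-layer}, whereas the paper writes $g(\z)^d=\int_0^\infty \1_{\{g(\z)^d\ge t\}}\dd t$ and substitutes $t=s^d$ after the swap; your explicit check that $\set{(\z,s):\ g(\z)\ge s}$ is jointly measurable supplies a detail the paper leaves implicit.
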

\begin{proof}
	Fix Lebesgue measurable $g\ge 0$ and an integer $d\ge 1$. For any $a\in [0,\infty]$ we have
	$a=\int_0^\infty \1_{\{a\ge t\}}\dd t$. Therefore,
	\begin{align*}
		\int_{\R^n} g(\z)^d\dd \z
		 & = \int_{\R^n}\int_0^\infty \1_{\{g(\z)^d\ge t\}}\dd t\,\dd \z       \\
		 & = \int_0^\infty \int_{\R^n}\1_{\{g(\z)\ge t^{1/d}\}}\dd \z\,\dd t   \\
		 & = \int_0^\infty \vol_n\bigl(\set{\z:\ g(\z)\ge t^{1/d}}\bigr)\dd t    \\
		 & = d\int_0^\infty s^{d-1}\vol_n\bigl(\set{\z:\ g(\z)\ge s}\bigr)\dd s,
	\end{align*}
	where the second line uses Tonelli's theorem and the last uses the substitution $t=s^d$.
\end{proof}

The power of the layer cake decomposition lies in the following observation. For a nonnegative function $g$ on $K$, both the continuous integral $\int_K g(\z)^d\dd \z$ and the discrete sum $\sum_{\z\in K\cap \Z^n} g(\z)^d$ admit parallel representations via the superlevel sets $L_s:=\{g\ge s\}$. Indeed, Lemma~\ref{lem:layer-cake} gives
\[
	\int_K g(\z)^d\dd \z = d\int_0^\infty s^{d-1} \vol_n(L_s)\dd s,
\]
and the same calculation applied to the counting measure yields (as we will derive in \eqref{eq:nu-layer})
\[
	\sum_{\z\in K\cap \Z^n} g(\z)^d = d\int_0^\infty s^{d-1} \#(L_s\cap \Z^n)\dd s.
\]
This structural similarity reduces the comparison of continuous and discrete volumes to a purely geometric problem: relating the lattice point count $\#(L_s\cap \Z^n)$ to the Lebesgue measure $\vol_n(L_s)$ for each level set $L_s$, via Lemma~\ref{lem:lattice-sandwich}.

\subsection{Concavity of slice radii}\label{subsec:concavity}

For $\z\in \R^n$, define
\begin{equation}\label{eq:f}
	f(\z):=\vol_d(C_{\z}).
\end{equation}
Since $C_{\z}=\emptyset$ for $\z\notin K$, we have $f(\z)=0$ on $\R^n\setminus K$.

\begin{lemma}[Upper semicontinuity of slice volumes]\label{lem:slice-usc}
	The function $f:\R^n\to [0,\infty)$ is upper semicontinuous. In particular, for every $s>0$ the set
	\[
		\set{\z\in \R^n:\ f(\z)\ge s}
	\]
	is closed in $\R^n$ and contained in $K$.
\end{lemma}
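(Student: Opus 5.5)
The approach is to show directly that $\limsup_{\z_j\to\z} f(\z_j)\le f(\z)$ for every $\z\in\R^n$ and every sequence $\z_j\to\z$. The closedness of superlevel sets then follows immediately. Also, $\set{f\ge s}\subseteq K$ for $s>0$, because $f=0$ off $K$.

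Fix $\z$ and a sequence $\z_j\to \z$. Along a subsequence realizing the $\limsup$, we may assume $f(\z_j)\to \limsup$.

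The key observation is a pointwise inclusion, which follows from closedness of $C$:
\[
\limsup_{j}\,C_{\z_j}\subseteq C_{\z}.
\]
Here the left side is the set-theoretic limsup, i.e., points $\x$ with $\x\in C_{\z_j}$ for infinitely many $j$. Indeed, if $(\z_j,\x)\in C$ infinitely often and $\z_j\to\z$, then $(\z,\x)\in C$.

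Next we use boundedness. All fibers $C_{\z_j}$ lie in a fixed bounded set $B\subset\R^d$, for instance the projection of $C$ onto $\R^d$, which has finite measure. Apply the reverse Fatou lemma to the indicators $\1_{C_{\z_j}}\le \1_B$, which are dominated by an integrable function:
\[
\limsup_j \vol_d(C_{\z_j})=\limsup_j\int \1_{C_{\z_j}}\le \int \limsup_j \1_{C_{\z_j}}=\vol_d\bigl(\limsup_j C_{\z_j}\bigr)\le \vol_d(C_\z)=f(\z).
\]
This proves upper semicontinuity. For $\z\notin K$ the inequality is consistent: $C_\z=\emptyset$, so the limsup set is empty. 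This also follows from $K$ being compact, hence closed, so that $\z_j\notin K$ eventually.

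Finally, for $s>0$, upper semicontinuity gives that $\set{f\ge s}$ is closed. If $\z_j\to\z$ with $f(\z_j)\ge s$, then $f(\z)\ge\limsup f(\z_j)\ge s$. Containment in $K$ holds since $f$ vanishes outside $K$.

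There is no real obstacle here. The only delicate point is justifying the interchange of limsup and integral, which is exactly where boundedness of $C$ (the dominating indicator $\1_B$) is needed. An alternative route would use continuity of $\vol_d(C_\z)^{1/d}$ in the interior of $K$ via Brunn–Minkowski concavity, but that would not handle boundary points of $K$. So I would stick with the Fatou argument, which works uniformly.
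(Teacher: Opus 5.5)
Your proof is correct, and it takes a different route from the paper's. The paper splits into the cases $\z\notin K$ (using closedness of $K$, so that $\z_k\notin K$ eventually) and $\z\in K$. In the second case it proves a uniform containment $C_{\z_k}\subseteq C_{\z}+\epsilon\mathbb{B}_2^d$ for all large $k$, via a sequential-compactness contradiction argument. It then lets $\epsilon\downarrow 0$ and invokes continuity from above of Lebesgue measure on these decreasing closed neighborhoods, whose intersection is $C_{\z}$.

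You need only the pointwise inclusion $\limsup_j C_{\z_j}\subseteq C_{\z}$, which uses nothing beyond closedness of $C$. You then put all the analysis into the reverse Fatou lemma, with boundedness of $C$ supplying the dominating function $\1_B$.

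Your route buys economy. There is no case distinction, because $\z\notin K$ is absorbed automatically since $C_\z=\emptyset$. There are no $\epsilon$-neighborhoods and no separate continuity-from-above step. The roles are also cleanly separated: closedness gives the inclusion and boundedness gives the domination. The paper's route gives a stronger, more geometric intermediate fact: fibers near $\z$ lie in small neighborhoods of $C_\z$, i.e.\ Hausdorff-type upper semicontinuity of the fiber map. That is more than the volume inequality requires.

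Your preliminary passage to a subsequence realizing the $\limsup$ is harmless but unnecessary, since reverse Fatou applies directly to the whole sequence.
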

\begin{proof}
	Fix $\z\in \R^n$ and let $\z_k\to \z$ in $\R^n$. We show $\limsup_{k\to\infty} f(\z_k)\le f(\z)$.

	If $\z\notin K$, then $\delta:=\operatorname{dist}(\z,K)>0$ since $K$ is closed.
	For all large $k$, $\|\z_k-\z\|_2<\delta/2$, hence $\operatorname{dist}(\z_k,K)\ge \delta/2>0$ and $\z_k\notin K$.
	Therefore $f(\z_k)=0=f(\z)$ for all large $k$, so the claim follows.

	Now assume $\z\in K$. Let $\epsilon>0$ and set $U:=C_{\z}+\epsilon \set{\x \in\R^d:\ \|\x\|_2< 1}$. The set $U$ is open in $\R^d$ and contains $C_{\z}$.
	We claim that $C_{\z_k}\subseteq U$ for all large $k$ such that $\z_k\in K$.
	If not, then there exist a subsequence $\z_{k_j}$ and points $\x_{k_j}\in C_{\z_{k_j}}$ with $\x_{k_j}\notin U$.
	Since $C$ is compact, the sequence $(\z_{k_j},\x_{k_j})\in C$ has a convergent subsequence to some $(\z,\x)\in C$.
	Then $\x\in C_{\z}\subseteq U$, and since $U$ is open, $\x_{k_j}\in U$ for all large $j$, a contradiction.

	Therefore, for all large $k$, we have $C_{\z_k}\subseteq C_{\z}+\epsilon \mathbb{B}_2^d$ (if $\z_k \not\in K$, then $C_{\z_k} = \emptyset$), and thus
	\[
		f(\z_k)=\vol_d(C_{\z_k})\le \vol_d(C_{\z}+\epsilon \mathbb{B}_2^d).
	\]
	Taking $\limsup_{k\to\infty}$ gives $\limsup_{k\to\infty} f(\z_k)\le \vol_d(C_{\z}+\epsilon \mathbb{B}_2^d)$.
	Now take $\epsilon_m:=1/m$ and set $E_m:=C_{\z}+\epsilon_m \mathbb{B}_2^d$.
	Then $(E_m)$ is a decreasing sequence of closed sets with $\bigcap_{m\ge 1} E_m=C_{\z}$.
	Since $C$ is compact, $E_1$ is bounded and hence $\vol_d(E_1)<\infty$.
	By continuity from above for Lebesgue measure, $\vol_d(E_m)\downarrow \vol_d(C_{\z})$.
	Therefore, $\limsup_{k\to\infty} f(\z_k)\le \inf_{m\ge 1}\vol_d(E_m)=\vol_d(C_{\z})=f(\z)$.
	Since $f(\z)=0$ for $\z\notin K$, we also have $\set{\z\in \R^n:\ f(\z)\ge s}\subseteq K$ for every $s>0$.
\end{proof}

We will use the Brunn--Minkowski inequality in the following standard form (see, e.g., \cite{schneider2014convex}).

\begin{theorem}[Brunn--Minkowski]\label{thm:BM}
	Let $A,B\subset \R^d$ be Lebesgue measurable sets with finite volume. For every $\lambda\in [0,1]$,
	\[
		\vol_d\bigl((1-\lambda)A+\lambda B\bigr)^{1/d}\ge (1-\lambda)\vol_d(A)^{1/d}+\lambda \vol_d(B)^{1/d}.
	\]
\end{theorem}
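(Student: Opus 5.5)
Since this is the classical Brunn--Minkowski inequality, the plan is the standard Hadwiger--Ohmann argument, with an approximation step at the end. First I would dispose of the trivial cases. If $\lambda\in\{0,1\}$ there is nothing to prove. If $A$ or $B$ is empty, or has volume zero, the inequality follows from translation invariance. For instance, if $B\neq\emptyset$ and $\vol_d(B)=0$, then $(1-\lambda)A+\lambda B$ contains a translate of $(1-\lambda)A$, whose volume is $(1-\lambda)^d\vol_d(A)$.

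For general measurable $A,B$ the set $(1-\lambda)A+\lambda B$ need not be measurable, so I would read its volume as inner Lebesgue measure. In the applications of this paper it is the Minkowski combination of fibers of a convex body, which is convex and hence measurable, so nothing is lost. Writing $A'=(1-\lambda)A$ and $B'=\lambda B$ and using $\vol_d(tX)=t^d\vol_d(X)$, it suffices to prove the unweighted form
\[
\vol_d(A'+B')^{1/d}\ge \vol_d(A')^{1/d}+\vol_d(B')^{1/d}.
\]

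\textbf{Boxes.} Let $A,B$ be axis-parallel boxes with side lengths $a_i,b_i$. Then $A+B$ is a box with sides $a_i+b_i$. The claim is equivalent to
\[
\prod_i\Bigl(\frac{a_i}{a_i+b_i}\Bigr)^{1/d}+\prod_i\Bigl(\frac{b_i}{a_i+b_i}\Bigr)^{1/d}\le 1.
\]
This follows by applying AM--GM to each product and adding the two bounds.

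\textbf{Finite unions of boxes.} Next I would treat $A,B$ that are finite unions of boxes with pairwise disjoint interiors, by induction on the total number of boxes $p+q$. If $A$ has at least two boxes, choose a coordinate hyperplane $\{x_j=t\}$ that separates two of its boxes. Let $A_\pm$ be the parts of $A$ on either side, so each has strictly fewer boxes. Then translate the hyperplane to a position $\{x_j=t'\}$ splitting $B$ into $B_\pm$ in the same volume proportions, $\vol_d(B_\pm)/\vol_d(B)=\vol_d(A_\pm)/\vol_d(A)=:\theta_\pm$. This is possible by continuity of $t'\mapsto\vol_d(B\cap\{x_j\le t'\})$.

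Each pair $(A_\pm,B_\pm)$ has fewer boxes in total, so the induction hypothesis applies to it. The sets $A_++B_+$ and $A_-+B_-$ lie on opposite sides of $\{x_j=t+t'\}$, hence have disjoint interiors. Summing the two inductive bounds gives
\[
\vol_d(A+B)\ge \theta_+\bigl(\vol_d(A)^{1/d}+\vol_d(B)^{1/d}\bigr)^d+\theta_-\bigl(\vol_d(A)^{1/d}+\vol_d(B)^{1/d}\bigr)^d=\bigl(\vol_d(A)^{1/d}+\vol_d(B)^{1/d}\bigr)^d.
\]
Keeping this bookkeeping straight is where I expect most of the care to be needed.

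\textbf{Compact sets, then measurable sets.} For compact $A,B$, approximate each from outside by decreasing unions of dyadic boxes $A_m\downarrow A$ and $B_m\downarrow B$. Then $A_m+B_m$ decreases to $A+B$, which holds by compactness: a limit of points $a_m+b_m$ is attained after passing to subsequences. Continuity from above passes the inequality to the limit.

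Finally, for measurable $A,B$ of finite volume, use inner regularity. Choose compact $A_0\subseteq A$ and $B_0\subseteq B$ with volumes arbitrarily close to $\vol_d(A)$ and $\vol_d(B)$. Since $A_0+B_0\subseteq A+B$ and $A_0+B_0$ is compact, the inner measure of $A+B$ is at least $\vol_d(A_0+B_0)$. Letting the approximation tighten yields the theorem.

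The main obstacle is the inductive splitting step. The measure-theoretic limit arguments are routine once the measurability convention is fixed.
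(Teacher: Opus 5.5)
The paper does not prove this statement. It quotes Brunn--Minkowski as a classical fact from Schneider's monograph and applies it only once, in Lemma~\ref{lem:brunn-concave}, to two fibers $C_{\z_1},C_{\z_2}$ of a convex body. Those are nonempty compact convex sets, and their Minkowski combination is convex and hence measurable. Your argument is the standard Hadwiger--Ohmann proof: boxes via AM--GM, then finite unions of boxes via the proportional splitting induction, then compact sets via decreasing dyadic outer approximations, then measurable sets via inner regularity. Reading the left side as inner measure is the right repair for the measurability issue that the printed statement glosses over. The main line of your proof is correct, and it proves more than the paper needs, since your compact case already covers the application.

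There is one false step, in the degenerate cases. You assert that if $A$ or $B$ is empty, the inequality follows from translation invariance. It does not. If $A=\emptyset$ and $\vol_d(B)>0$, then $(1-\lambda)A+\lambda B=\emptyset$ for every $\lambda\in(0,1]$. So the left side is $0$ while the right side is $\lambda\vol_d(B)^{1/d}>0$. Likewise, dismissing $\lambda\in\{0,1\}$ as trivial tacitly uses that the set carrying coefficient $0$ is nonempty, so that it scales to $\{\0\}$ rather than to $\emptyset$.

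So the theorem as printed is false without a nonemptiness hypothesis. You should add $A,B\neq\emptyset$, as in the standard formulations, rather than claim this case. With that hypothesis, your volume-zero argument and everything after it go through. The omission is harmless for the paper, because $\z_1,\z_2\in K=\proj_{\R^n}(C)$ guarantees that the fibers are nonempty.
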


\begin{lemma}[Brunn concavity]\label{lem:brunn-concave}
	Define $g:\R^n\to [0,\infty)$ by
	\begin{equation}\label{eq:g}
		g(\z):=f(\z)^{1/d}.
	\end{equation}
	Then $g$ is concave on $K$ and upper semicontinuous on $\R^n$.
\end{lemma}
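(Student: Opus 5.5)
The plan is to derive concavity from Brunn--Minkowski (Theorem~\ref{thm:BM}) applied to fibers. Upper semicontinuity then follows from Lemma~\ref{lem:slice-usc} and the monotonicity of $t\mapsto t^{1/d}$.

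For concavity, fix $\z_1,\z_2\in K$ and $\lambda\in[0,1]$, and set $\z_\lambda:=(1-\lambda)\z_1+\lambda\z_2$, which lies in $K$ since $K$ is convex as a projection of a convex set. The key containment is
\[
(1-\lambda)C_{\z_1}+\lambda C_{\z_2}\subseteq C_{\z_\lambda}.
\]
To check it, take $\x_1\in C_{\z_1}$ and $\x_2\in C_{\z_2}$. Then $(\z_i,\x_i)\in C$, and by convexity of $C$ the point $(\z_\lambda,(1-\lambda)\x_1+\lambda\x_2)$ lies in $C$.

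The fibers $C_{\z_i}$ are compact convex subsets of $\R^d$, hence measurable with finite volume. They are nonempty because $\z_i\in K$, though they may have empty interior; in that case their volume is $0$, and Theorem~\ref{thm:BM} as stated still applies to arbitrary measurable sets of finite volume. Note that $(1-\lambda)C_{\z_1}+\lambda C_{\z_2}$ is compact, hence measurable. Monotonicity of $\vol_d$ together with Theorem~\ref{thm:BM} gives
\[
g(\z_\lambda)=\vol_d(C_{\z_\lambda})^{1/d}\ge \vol_d\bigl((1-\lambda)C_{\z_1}+\lambda C_{\z_2}\bigr)^{1/d}\ge (1-\lambda)g(\z_1)+\lambda g(\z_2).
\]
This is concavity on $K$.

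For upper semicontinuity, let $\z_k\to\z$. By Lemma~\ref{lem:slice-usc}, $\limsup f(\z_k)\le f(\z)$. Since $t\mapsto t^{1/d}$ is continuous and nondecreasing on $[0,\infty)$, it follows that $\limsup g(\z_k)=(\limsup f(\z_k))^{1/d}\le f(\z)^{1/d}=g(\z)$. Equivalently, $\{g\ge s\}=\{f\ge s^d\}$ is closed for every $s>0$; for $s\le 0$ the set is all of $\R^n$.

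The only point that needs some care is the degenerate case in which some fiber is lower-dimensional. This causes no problem, because the Brunn--Minkowski form we invoke allows sets of zero volume, and the right-hand side of the inequality is then simply a weighted sum in which that term vanishes. I do not anticipate any real obstacle.
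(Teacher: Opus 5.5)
Your proposal is correct and matches the paper's proof: concavity comes from the fiber containment $(1-\lambda)C_{\z_1}+\lambda C_{\z_2}\subseteq C_{\z_\lambda}$ combined with Brunn--Minkowski, and upper semicontinuity comes from Lemma~\ref{lem:slice-usc} composed with the continuous nondecreasing map $t\mapsto t^{1/d}$. Your remarks on the nonemptiness of the fibers and the measurability of the Minkowski combination fill in details the paper leaves implicit.
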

\begin{proof}
	Let $\z_1,\z_2\in K$ and $\lambda\in [0,1]$. Put $\z_\lambda:=(1-\lambda)\z_1+\lambda \z_2$.
	By convexity of $C$,
	\[
		(1-\lambda)C_{\z_1}+\lambda C_{\z_2}\subseteq C_{\z_\lambda}.
	\]
		Applying Theorem~\ref{thm:BM} and taking $d$th roots yields
		\[
			\begin{aligned}
				g(\z_\lambda)
				 & =\vol_d(C_{\z_\lambda})^{1/d} \\
				 & \ge (1-\lambda)\vol_d(C_{\z_1})^{1/d}+\lambda \vol_d(C_{\z_2})^{1/d} \\
				 & =(1-\lambda)g(\z_1)+\lambda g(\z_2),
			\end{aligned}
		\]
		which is concavity.

	Upper semicontinuity follows from Lemma~\ref{lem:slice-usc} and continuity/monotonicity of $t\mapsto t^{1/d}$ on $[0,\infty)$.
\end{proof}

\subsection{Thickening and erosion bounds}\label{subsec:thickening}

Let $f,g$ be defined as in~\eqref{eq:f} and~\eqref{eq:g}. For $s\ge 0$, define the superlevel set
\begin{equation}\label{eq:Ks}
	L_s:=\set{\z\in \R^n:\ g(\z)\ge s}.
\end{equation}
If $s>0$, then $L_s\subseteq K$ (Lemma~\ref{lem:slice-usc}).
In particular, $L_s$ is bounded for every $s>0$. Note that $L_0=\R^n$, but its value plays no role in the $s$ integrals below.

\begin{lemma}[$Q$-envelopes]\label{lem:q-envelopes}
	Let $g$ be as defined in~\eqref{eq:g} and $L_s$ be as in \eqref{eq:Ks}, and recall $Q=[-1/2,1/2]^n$.
	Define
	\begin{equation}\label{eq:gpm}
		g_+(\z):=\sup_{\q\in Q}g(\z-\q),\qquad g_-(\z):=\inf_{\q\in Q}g(\z+\q).
	\end{equation}
	Then for every $s\ge 0$,
	\begin{equation}\label{eq:level-identities}
		\set{\z:\ g_+(\z)\ge s}=L_s+Q,\qquad \set{\z:\ g_-(\z)\ge s}=L_s\ominus Q.
	\end{equation}
	In particular, $g_+$ and $g_-$ are upper semicontinuous and hence Lebesgue measurable.
\end{lemma}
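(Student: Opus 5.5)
The plan is to verify the two level-set identities in \eqref{eq:level-identities} directly from the definitions. The key tools are the closedness of $L_s$ for $s>0$ (Lemma~\ref{lem:slice-usc} together with Lemma~\ref{lem:brunn-concave}) and compactness of $Q$, which makes the supremum defining $g_+$ attained. The case $s=0$ is trivial: $g_+\ge 0$ and $g_-\ge 0$ everywhere, while $L_0=\R^n$ gives $L_0+Q=\R^n$ and $L_0\ominus Q=\R^n$. So fix $s>0$.

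For the thickening identity, first take $\z\in L_s+Q$. Then $\z=\a+\q$ with $g(\a)\ge s$ and $\q\in Q$, so $g_+(\z)\ge g(\z-\q)=g(\a)\ge s$. Conversely, suppose $g_+(\z)\ge s$. The map $\q\mapsto g(\z-\q)$ is upper semicontinuous, since $g$ is upper semicontinuous by Lemma~\ref{lem:brunn-concave}. An upper semicontinuous function attains its supremum on the compact set $Q$, so there is $\q^\ast\in Q$ with $g(\z-\q^\ast)=g_+(\z)\ge s$. Hence $\z-\q^\ast\in L_s$ and $\z\in L_s+Q$. This attainment step is the only place requiring care; without upper semicontinuity we would only get $g(\z-\q_j)\ge s-1/j$. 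As a fallback, we could take a convergent subsequence $\q_j\to\q^\ast$ and use closedness of $L_{s'}$ for $s'<s$ to conclude.

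For the erosion identity, $g_-(\z)\ge s$ means $g(\z+\q)\ge s$ for every $\q\in Q$. Here the infimum needs no attainment, because each individual value is bounded below by $s$. This condition is exactly $\z+Q\subseteq L_s$, i.e.\ $\z\in L_s\ominus Q$, and the reverse implication is the same chain read backwards.

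For measurability, it suffices to show that each superlevel set $\{g_\pm\ge s\}$ is closed for every $s$; this is equivalent to upper semicontinuity and implies Borel measurability. For $s\le 0$ these sets are all of $\R^n$. For $s>0$, $L_s$ is closed and bounded, hence compact, so $L_s+Q$ is compact as a sum of compact sets. Also $L_s\ominus Q=\bigcap_{\q\in Q}(L_s-\q)$ is an intersection of closed sets, hence closed. This is the same observation already used at the start of the proof of Lemma~\ref{lem:lattice-sandwich}.
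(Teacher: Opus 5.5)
Your proposal is correct and follows the paper's proof: unwind the definitions of $g_\pm$ to identify their superlevel sets with $L_s+Q$ and $L_s\ominus Q$, then deduce upper semicontinuity from the closedness of these sets. Your use of the upper semicontinuity of $g$ and the compactness of $Q$ to show that the supremum defining $g_+$ is attained fills in a step the paper asserts without comment.
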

\begin{proof}
	Fix $s\ge 0$.
	For $g_+$, we have $\z\in \set{g_+\ge s}$ if and only if there exists $\q\in Q$ such that $g(\z-\q)\ge s$, which is equivalent to $\z-\q\in L_s$ and hence to $\z\in L_s+Q$.
	For $g_-$, we have $\z\in \set{g_-\ge s}$ if and only if $g(\z+\q)\ge s$ for all $\q\in Q$, i.e., $\z+Q\subseteq L_s$, which is precisely $\z\in L_s\ominus Q$.
	This proves \eqref{eq:level-identities}.
	Since $L_s$ is closed and $Q$ is compact, both $L_s+Q$ and $L_s\ominus Q=\bigcap_{\q\in Q}(L_s-\q)$ are closed.
	Hence $g_+$ and $g_-$ are upper semicontinuous and therefore Lebesgue measurable.
\end{proof}
Figure~\ref{fig:lattice-sandwich-3d} provides a complementary geometric view (shown for $n=2$) of the level sets and envelopes $L_s$, $L_s+Q$, and $L_s\ominus Q$ that are integrated in Lemma~\ref{lem:integral-thick}.

\begin{figure}[t]
	\centering
	\tdplotsetmaincoords{65}{115}
	\begin{tikzpicture}[tdplot_main_coords, scale=1.36]
		\def\Rs{2.5}
		\def\Hs{2.5}

		\foreach \xx in {-2,-1,0,1,2} {
			\foreach \yy in {-2,-1,0,1,2} {
				\pgfmathsetmacro{\radsq}{\xx*\xx+\yy*\yy}
				\ifdim \radsq pt < 6.5pt
					\filldraw[fill=figred!15, draw=figred!45, thin]
					(\xx-0.5,\yy-0.5,0) -- (\xx+0.5,\yy-0.5,0) --
					(\xx+0.5,\yy+0.5,0) -- (\xx-0.5,\yy+0.5,0) -- cycle;
					\fill[figred] (\xx,\yy,0) circle (1pt);
					\draw[figred!40, densely dotted, ultra thin] (\xx,\yy,0) -- (\xx,\yy,\Hs);
				\fi
			}
		}

		\draw[figblue, thick] plot[domain=0:360, samples=70, variable=\ang]
		({\Rs*cos(\ang)}, {\Rs*sin(\ang)}, 0);
		\node[figblue, anchor=north west] at (1.9, -1.9, 0) {$L_s$};

		\draw[figgold!80!black, thick, dashed]
		plot[domain=0:90, samples=20, variable=\ang] ({0.5+\Rs*cos(\ang)}, {0.5+\Rs*sin(\ang)}, 0) --
		plot[domain=90:180, samples=20, variable=\ang] ({-0.5+\Rs*cos(\ang)}, {0.5+\Rs*sin(\ang)}, 0) --
		plot[domain=180:270, samples=20, variable=\ang] ({-0.5+\Rs*cos(\ang)}, {-0.5+\Rs*sin(\ang)}, 0) --
		plot[domain=270:360, samples=20, variable=\ang] ({0.5+\Rs*cos(\ang)}, {-0.5+\Rs*sin(\ang)}, 0) -- cycle;
		\node[figgold!80!black] at (3.55, 1.65, 0.18) {$L_s+Q$};

		\def\ths{11.537}
		\def\phis{78.463}
		\draw[figgreen, thick, densely dashed]
		plot[domain={\ths}:{\phis}, samples=20, variable=\ang] ({-0.5+\Rs*cos(\ang)}, {-0.5+\Rs*sin(\ang)}, 0) --
		plot[domain={90+\ths}:{90+\phis}, samples=20, variable=\ang] ({0.5+\Rs*cos(\ang)}, {-0.5+\Rs*sin(\ang)}, 0) --
		plot[domain={180+\ths}:{180+\phis}, samples=20, variable=\ang] ({0.5+\Rs*cos(\ang)}, {0.5+\Rs*sin(\ang)}, 0) --
		plot[domain={270+\ths}:{270+\phis}, samples=20, variable=\ang] ({-0.5+\Rs*cos(\ang)}, {0.5+\Rs*sin(\ang)}, 0) -- cycle;
		\draw[<-, figgreen, thick] (-1.3, 1.5, 0) -- (-2.8, 2.8, 0) node[anchor=south east] {$L_s\ominus Q$};

		\fill[figblue!5, opacity=0.7] (-3.5,-3.5,\Hs) -- (3.5,-3.5,\Hs) -- (3.5,3.5,\Hs) -- (-3.5,3.5,\Hs) -- cycle;
		\draw[figblue!20, thin] (-3.5,-3.5,\Hs) -- (3.5,-3.5,\Hs) -- (3.5,3.5,\Hs) -- (-3.5,3.5,\Hs) -- cycle;
		\node[figblue, anchor=north east] at (-3.4,-3.4,\Hs) {slice level $s$};
		\draw[figblue, thick, fill=figblue!15, opacity=0.8]
		plot[domain=0:360, samples=70, variable=\ang] ({\Rs*cos(\ang)}, {\Rs*sin(\ang)}, \Hs);
		\foreach \xx in {-2,-1,0,1,2} {
			\foreach \yy in {-2,-1,0,1,2} {
				\pgfmathsetmacro{\radsq}{\xx*\xx+\yy*\yy}
				\ifdim \radsq pt < 6.5pt
					\fill[figred] (\xx,\yy,\Hs) circle (1.2pt);
				\fi
			}
		}

		\foreach \zz in {3.0,3.5,4.0,4.5} {
			\pgfmathsetmacro{\rr}{sqrt(max(0,(5.0-\zz)/0.4))}
			\draw[figblue!70, thin, smooth, domain=0:360, samples=40, variable=\ang]
			plot ({\rr*cos(\ang)}, {\rr*sin(\ang)}, \zz);
		}
		\foreach \ang in {0,45,90,135} {
			\draw[figblue!70, thin, smooth, domain=-\Rs:\Rs, samples=30, variable=\xx]
			plot ({\xx*cos(\ang)}, {\xx*sin(\ang)}, {\Hs+2.5-0.4*\xx*\xx});
		}
		\draw[<-, figblue, thick] (0,1.5,4.1) -- (0,2.5,5.0) node[above] {$g(\z)$};
	\end{tikzpicture}
\caption{Geometric view (shown over $\Z^2\times\R$) of the superlevel set $L_s=\set{\z\in\R^2:\ g(\z)\ge s}$ and the sets $L_s+Q$ and $L_s\ominus Q$. At level $s$, the union $\bigcup_{\z\in L_s\cap\Z^2}(\z+Q)$ has $\H_2$ measure $\#(L_s\cap\Z^2)$ and satisfies $L_s\ominus Q\subseteq \bigcup_{\z\in L_s\cap\Z^2}(\z+Q)\subseteq L_s+Q$, which is the geometric content of Lemma~\ref{lem:lattice-sandwich}.}
	\label{fig:lattice-sandwich-3d}
\end{figure}
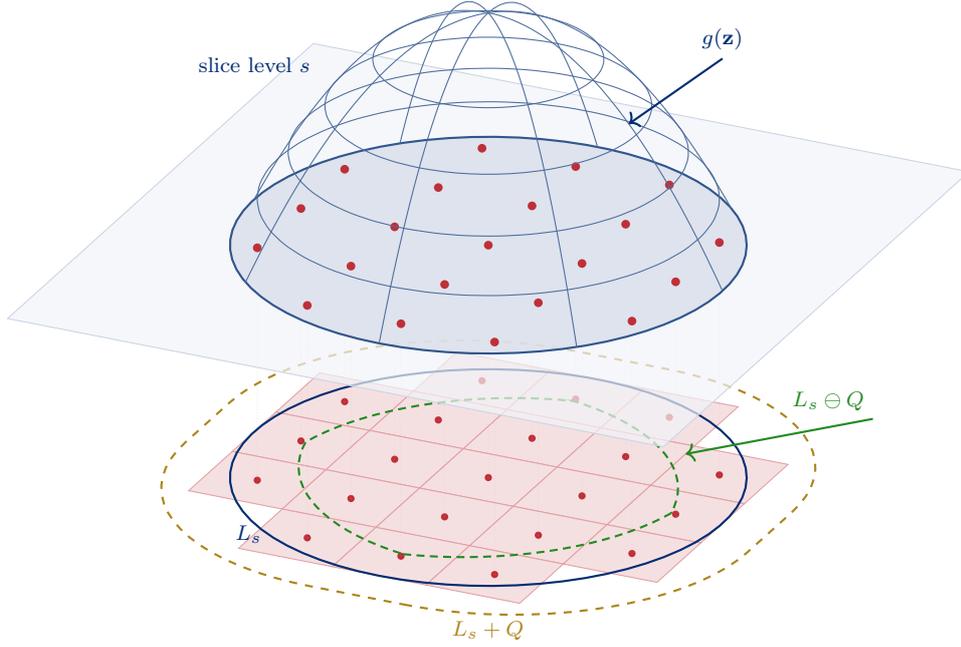

Figure~\ref{fig:morphological-envelope} shows a one dimensional slice of the envelope construction in Lemma~\ref{lem:q-envelopes}, clarifying how $g_+$ and $g_-$ control the discrete layer cake sum via \eqref{eq:level-identities}.

\begin{figure}[t]
	\centering
	\begin{tikzpicture}[x=1.2cm, y=1.35cm]
		\fill[figgold!10]
		plot[domain=-3.2:3.2, samples=120, variable=\xx] (\xx, {max(0,3.0-0.4*max(0,abs(\xx)-0.5)^2)})
		-- (3.2,0) -- (-3.2,0) -- cycle;

		\foreach \xx in {-2.5,-1.5,-0.5,0.5,1.5,2.5} {
			\draw[black!15, densely dotted, thick] (\xx,0) -- (\xx,{max(0,3.0-0.4*max(0,abs(\xx)-0.5)^2)});
		}

		\foreach \zz in {-2,-1,0,1,2} {
			\pgfmathsetmacro{\hh}{max(0,3.0-0.4*\zz*\zz)}
			\filldraw[fill=figred!15, draw=figred!50, thick] (\zz-0.5,0) rectangle (\zz+0.5,\hh);
			\fill[figred] (\zz,\hh) circle (2pt);
		}

		\draw[figblue, semithick, dashed, domain=-2.8:2.8, samples=120, variable=\xx]
		plot (\xx, {max(0,3.0-0.4*\xx*\xx)});
		\draw[figgold!80!black, semithick, domain=-3.2:3.2, samples=120, variable=\xx]
		plot (\xx, {max(0,3.0-0.4*max(0,abs(\xx)-0.5)^2)});
		\draw[figgreen, semithick, domain=-2.3:2.3, samples=120, variable=\xx]
		plot (\xx, {max(0,3.0-0.4*(abs(\xx)+0.5)^2)});

		\draw[->, thick, black!70] (-3.95,0) -- (5.25,0);
		\draw[->, thick, black!70] (0,0) -- (0,4.0) node[above, text=black!90, font=\normalsize] {slice radius};
		\node[anchor=west, text=black!90, font=\small] at (3.2,-0.30) {affine line in $\R^n$};

		\foreach \xx/\lab in {0/\z,1/\z+\u,-1/\z-\u,2/\z+2\u,-2/\z-2\u} {
			\draw[black!70, thick] (\xx,0.05) -- (\xx,-0.05) node[below, font=\normalsize, text=black!80] {$\lab$};
		}

		\draw[figgold!80!black, thin, densely dotted] (-0.5,3.0) -- (-0.5,3.4);
		\draw[figgold!80!black, thin, densely dotted] (0.5,3.0) -- (0.5,3.4);
		\draw[<->, figgold!80!black, thick] (-0.5,3.25) -- (0.5,3.25)
		node[midway, fill=figgold!10, inner sep=2pt, font=\normalsize] {$Q$};

		\node[figblue, align=right, anchor=east] at (-3.2,2.6) {$g(\z)$};
		\draw[->, figblue, thick, shorten >=2pt] (-3.2,2.6) -- (-1.8, {max(0,3.0-0.4*(-1.8)*(-1.8))});
		\node[figgreen, align=right, anchor=east] at (-3.2,1.0) {$g_-(\z)$};
		\draw[->, figgreen, thin, shorten >=2pt] (-3.2,1.0) -- (-1.5, 1.4);
		\node[figgold!80!black, align=left, anchor=west] at (3.2,2.8) {$g_+(\z)$};
		\draw[->, figgold!80!black, thin, shorten >=2pt] (3.2,2.8) -- (1.5, 2.6);
	\end{tikzpicture}
	\caption{One dimensional restriction along an affine line in $\R^n$ of the envelope identities in Lemma~\ref{lem:q-envelopes}. The blue dashed curve is $g$, and the gold and green curves are $g_+$ and $g_-$. The red columns correspond to unit cells centered at integer points on the slice. Equation \eqref{eq:level-identities} identifies the superlevel sets of $g_+$ and $g_-$ with $L_s+Q$ and $L_s\ominus Q$, which are integrated in \eqref{eq:int-thick} and \eqref{eq:int-erode}.}
	\label{fig:morphological-envelope}
\end{figure}
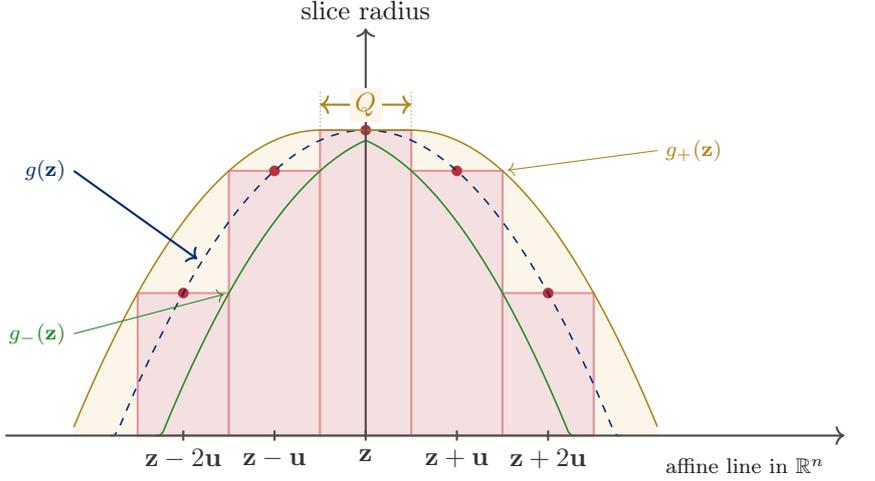

\begin{lemma}[Integral thickening and erosion]\label{lem:integral-thick}
	Let $K\subset \R^n$ be a convex body and let $g:\R^n\to [0,\infty)$ be as defined in~\eqref{eq:g} and let $L_s$ be as in \eqref{eq:Ks}.
	Assume that there exist $\c\in \R^n$ and $k>0$ such that
	\begin{equation}\label{eq:2kQ}
		\c+2kQ\subseteq K.
	\end{equation}
	Then for every integer $d\ge 1$,
	\begin{equation}\label{eq:int-thick}
		d\int_0^\infty s^{d-1}\vol_n(L_s+Q)\dd s \le \Bigl(1+\frac{1}{2k}\Bigr)^{n+d}
		d\int_0^\infty s^{d-1}\vol_n(L_s)\dd s.
	\end{equation}
	If in addition $k>1/2$, then
	\begin{equation}\label{eq:int-erode}
		d\int_0^\infty s^{d-1}\vol_n(L_s\ominus Q)\dd s \ge \Bigl(1-\frac{1}{2k}\Bigr)^{n+d}
		d\int_0^\infty s^{d-1}\vol_n(L_s)\dd s.
	\end{equation}
\end{lemma}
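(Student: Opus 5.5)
The plan is a homothety argument centered at $\c$. Both inequalities will follow from a set inclusion between level sets at \emph{different} heights $s$, combined with a change of variables in $s$. The concavity of $g$ on $K$ from Lemma~\ref{lem:brunn-concave} supplies the inclusion, and the box condition \eqref{eq:2kQ} guarantees that the auxiliary points we use lie in $K$, where concavity is available. The exponent $n$ in $(1\pm\frac1{2k})^{n+d}$ comes from the dilation in $\R^n$, and the exponent $d$ comes from rescaling $s$ under the weight $s^{d-1}$.

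\emph{Thickening.} Set $\lambda:=1+\frac1{2k}$, so that $\frac1{\lambda-1}=2k$. I will show that for every $s>0$,
\[
L_s+Q\subseteq \c+\lambda\bigl(L_{s/\lambda}-\c\bigr).
\]
Take $\z\in L_s$ and $\q\in Q$, and put $\w:=\c+\frac1\lambda(\z+\q-\c)$. A direct rearrangement gives
\[
\w=\tfrac1\lambda\,\z+\bigl(1-\tfrac1\lambda\bigr)\bigl(\c+2k\q\bigr).
\]
Here $\c+2k\q\in\c+2kQ\subseteq K$, and $\z\in L_s\subseteq K$. Concavity of $g$ on $K$ and $g\ge0$ then give $g(\w)\ge \frac1\lambda g(\z)\ge s/\lambda$, so $\w\in L_{s/\lambda}$. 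Consequently $\vol_n(L_s+Q)\le\lambda^n\vol_n(L_{s/\lambda})$. Substituting $s=\lambda t$ yields
\[
d\int_0^\infty s^{d-1}\vol_n(L_{s/\lambda})\dd s=\lambda^d\, d\int_0^\infty t^{d-1}\vol_n(L_t)\dd t,
\]
and combining the two gives \eqref{eq:int-thick}. Measurability is covered by Lemma~\ref{lem:q-envelopes}, and the level $s=0$ is a null set, so it can be ignored.

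\emph{Erosion.} Set $\mu:=1-\frac1{2k}$, which is positive because $k>1/2$, and note $\frac1{1-\mu}=2k$. I will show that
\[
\c+\mu\bigl(L_{s/\mu}-\c\bigr)\subseteq L_s\ominus Q.
\]
Take $\z\in L_{s/\mu}$ and put $\w:=\c+\mu(\z-\c)$. For any $\q\in Q$,
\[
\w+\q=\mu\,\z+(1-\mu)\bigl(\c+2k\q\bigr),
\]
with both $\z$ and $\c+2k\q$ in $K$. Concavity gives $g(\w+\q)\ge\mu g(\z)\ge s$. Hence $\w+Q\subseteq L_s$, i.e.\ $\w\in L_s\ominus Q$. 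Therefore $\vol_n(L_s\ominus Q)\ge\mu^n\vol_n(L_{s/\mu})$, and the substitution $s=\mu t$ contributes the factor $\mu^d$, giving \eqref{eq:int-erode}.

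The only delicate step is finding the right convex-combination identity. The translate $\q$ has to be absorbed into the auxiliary point $\c+2k\q$, and the weights must be exactly $1/\lambda$ and $\mu$ so that this point lands in $\c+2kQ\subseteq K$. Concavity is only guaranteed on $K$, so this containment is exactly where hypothesis \eqref{eq:2kQ} is used. Once the identity is in place, everything else is bookkeeping: volume scaling under homothety, and the change of variables in the layer cake integral.
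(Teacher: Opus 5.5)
Your proof is correct, and it takes a more direct route than the paper.

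The paper's route goes through an auxiliary convex body in $\R^{n+d}$:
\begin{itemize}
\item It first builds the fiberwise Schwarz-symmetrized body $E=\{(\z,\x):\ \z\in K,\ \|\x\|_2\le g(\z)\}\subset\R^n\times\R^d$ and checks that it is convex; this is where concavity of $g$ enters.
\item Via Tonelli and the envelopes $g_\pm$, it identifies $\vol_{n+d}(E)$, $\vol_{n+d}(E+(Q\times\{\0\}))$ and $\vol_{n+d}(E\ominus(Q\times\{\0\}))$ with $v_d\,d\int_0^\infty s^{d-1}\vol_n(\cdot)\dd s$ evaluated on $L_s$, $L_s+Q$ and $L_s\ominus Q$ respectively.
\item It then proves the homothety inclusions $E+(Q\times\{\0\})\subseteq(1+\epsilon)E$ and $(1-\epsilon)E\subseteq E\ominus(Q\times\{\0\})$ about $(\c,\0)$. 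The whole exponent $n+d$ thus comes from a single dilation of a convex body in $\R^{n+d}$.
\end{itemize}

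Your inclusions $L_s+Q\subseteq\c+\lambda(L_{s/\lambda}-\c)$ and $\c+\mu(L_{s/\mu}-\c)\subseteq L_s\ominus Q$ are exactly the level-set form of those two body inclusions. They are equivalent to $g_+(\z)\le\lambda\,g(\c+(\z-\c)/\lambda)$ and its erosion analogue. You also derive them from the same convex-combination identity with the auxiliary point $\c+2k\q$.

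The difference is that you never need $E$. The factor $\lambda^n$ comes from the homothety in $\R^n$, and the factor $\lambda^d$ (resp.\ $\mu^d$) comes from the substitution $s=\lambda t$ (resp.\ $s=\mu t$) in the weighted layer-cake integral. This buys you several things:
\begin{itemize}
\item a shorter argument with no symmetrization;
\item no fiber computations for $E^\pm$ (where the paper needs upper semicontinuity to see that the union of balls defining $E^+_\z$ is a single ball);
\item no constant $v_d$ to cancel;
\item a comparison stated level by level in $s$, which the paper only uses in integrated form.
\end{itemize}
Your argument also uses only that $g\ge 0$ is concave on $K$ with $L_s\subseteq K$ for $s>0$. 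So it works for the weight $s^{p-1}$ with any real $p>0$, whereas the paper's $E$ needs $d$ to be an integer ambient dimension.

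In exchange, the paper's route gives a transparent geometric explanation of why the exponent is the total dimension $n+d$.
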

\begin{proof}
	Translate coordinates so that $\c=\0$. This does not change the integrals in \eqref{eq:int-thick} and \eqref{eq:int-erode}.
	Thus we assume $2kQ\subseteq K$.

	Define the set
	\[
		E:=\set{(\z,\x)\in \R^n\times \R^d:\ \z\in K,\ \|\x\|_2\le g(\z)}.
	\]
	Each $\z$ fiber is replaced by a centered Euclidean ball of radius $g(\z)$, a fiberwise Schwarz symmetrization \citep{kesavan2006symmetrization}.
	The set $E$ is convex: if $(\z_i,\x_i)\in E$ and $\lambda\in [0,1]$, then
	$\z_\lambda=(1-\lambda)\z_1+\lambda \z_2\in K$ by convexity of $K$, and
	\[
		\|\,(1-\lambda)\x_1+\lambda \x_2\,\|_2\le (1-\lambda)\|\x_1\|_2+\lambda \|\x_2\|_2\le (1-\lambda)g(\z_1)+\lambda g(\z_2)\le g(\z_\lambda),
	\]
	where the last inequality uses concavity of $g$ on $K$ (Lemma~\ref{lem:brunn-concave}). Hence $(\z_\lambda,(1-\lambda)\x_1+\lambda \x_2)\in E$.

	Let $v_d:=\vol_d(\set{\x:\ \|\x\|_2\le 1})$, the $d$-dimensional Lebesgue measure of the Euclidean unit ball in $\R^d$.
	For each $\z\in K$, the $\x$ fiber $\set{\x:\ \|\x\|_2\le g(\z)}$ equals the dilation $g(\z)\set{\x:\ \|\x\|_2\le 1}$, so
	\[\vol_d(\set{\x:\ \|\x\|_2\le g(\z)})=v_d\,g(\z)^d.\]
	Since $g(\z)=0$ on $\R^n\setminus K$, we have $\int_K g(\z)^d\dd \z=\int_{\R^n}g(\z)^d\dd \z$.
	For each $\z\in \R^n$, let $E_\z:=\set{\x\in \R^d:\ (\z,\x)\in E}$.
	Then $E_\z=\set{\x:\ \|\x\|_2\le g(\z)}$ if $\z\in K$, and $E_\z=\emptyset$ otherwise.
	In particular, $\int_{\R^d}\1_E(\z,\x)\dd\x=\vol_d(E_\z)$ for every $\z\in \R^n$.
	By Tonelli's theorem, we have
	\[
		\begin{aligned}
			\vol_{n+d}(E)
			 & =\int_{\R^{n+d}}\1_E(\z,\x)\dd(\z,\x)
			=\int_{\R^n}\int_{\R^d}\1_E(\z,\x)\dd\x\,\dd\z \\
			 & =\int_{\R^n}\vol_d(E_\z)\dd\z
			=\int_{\z\in K}\vol_d(\set{\x:\ \|\x\|_2\le g(\z)})\dd\z.
		\end{aligned}
	\]
	\noindent
	Combining with the identities above yields
	\begin{equation}\label{eq:volD}
		\vol_{n+d}(E)=v_d\int_{\R^n}g(\z)^d\dd \z.
	\end{equation}
	Since $g$ is upper semicontinuous by Lemma~\ref{lem:brunn-concave}, it is Lebesgue measurable. Applying Lemma~\ref{lem:layer-cake} to $g^d$ and using $\set{g\ge s}=L_s$ gives
	\begin{equation}\label{eq:intKs}
		\int_{\R^n}g(\z)^d\dd \z = d\int_0^\infty s^{d-1}\vol_n(L_s)\dd s.
	\end{equation}

		Next consider $E^+:=E+\left(Q\times \{\0\}\right)$.
		For each $\z\in \R^n$, write $E^+_{\z}$ for the $\z$-fiber of $E^+$ (i.e., the $\x$-section at $\z$).
		Then $\x\in E^+_{\z}$ if and only if $(\z,\x)\in E^+$, i.e., there exists $\q\in Q$ such that $(\z-\q,\x)\in E$. This is equivalent to saying there exists $\q\in Q$ with $\z-\q\in K$ and $\|\x\|_2\le g(\z-\q)$.
		Thus,
		\[
			E^+_{\z}
			=\bigcup_{\substack{\q\in Q\\ \z-\q\in K}} g(\z-\q)\,\mathbb{B}_2^d.
		\]
	If the index set is empty, then $E^+_{\z}$ is empty and has Lebesgue measure $0$.
	Otherwise, the index set is compact and the map $\q\mapsto g(\z-\q)$ is upper semicontinuous, so it attains its maximum at some $\q_\z\in Q$ with $\z-\q_\z\in K$.
	The ball $\set{\x:\ \|\x\|_2\le g(\z-\q_\z)}$ contains all other balls in the union, and hence the union equals this ball.
	Its radius satisfies $g(\z-\q_\z)=\max_{\q\in Q}g(\z-\q)=g_+(\z)$, so the fiber $E^+_{\z}$ has $d$-dimensional Lebesgue measure $v_d\,g_+(\z)^d$.
	Applying Tonelli's theorem yields
	\[
		\vol_{n+d}(E^+)
		=\int_{\R^n}\vol_d(E^+_{\z})\dd\z
		=v_d\int_{\R^n} g_+(\z)^d\dd \z.
	\]
	By Lemma~\ref{lem:q-envelopes}, $\set{\z:\ g_+(\z)\ge s}=L_s+Q$ for every $s\ge 0$, and $g_+$ is Lebesgue measurable.
	Applying Lemma~\ref{lem:layer-cake} to $g_+^d$ yields
	\begin{equation}\label{eq:volDthick}
		\vol_{n+d}(E^+) = v_d\,
		d\int_0^\infty s^{d-1}\vol_n(L_s+Q)\dd s.
	\end{equation}

		Now consider the erosion $E^-:=E\ominus\left(Q\times\{\0\}\right)$.
		For each $\z\in \R^n$, write $E^-_{\z}$ for the $\z$-fiber of $E^-$ (i.e., the $\x$-section at $\z$).
		By definition of Minkowski erosion, $\x\in E^-_{\z}$ if and only if $(\z,\x)+(\q,\0)\in E$ for every $\q\in Q$, i.e., $\z+Q\subseteq K$ and $\|\x\|_2\le g(\z+\q)$ for all $\q\in Q$.
		Hence $E^-_{\z}=\emptyset$ unless $\z+Q\subseteq K$, in which case
		\[
			\begin{aligned}
				E^-_{\z}
				 & =\bigcap_{\q\in Q}\set{\x:\ \|\x\|_2\le g(\z+\q)}\\
				& =\set{\x:\ \|\x\|_2\le \inf_{\q\in Q} g(\z+\q)} \\
				 & =\set{\x:\ \|\x\|_2\le g_-(\z)}.
			\end{aligned}
		\]
	Since all balls are centered at the origin, the intersection is again a ball, whose radius is the infimum of the radii.
	If $\z+Q\not\subseteq K$, then $E^-_{\z}=\emptyset$ and $g_-(\z)=0$ because there exists $\q\in Q$ with $\z+\q\notin K$ and hence $g(\z+\q)=0$.
	If $\z+Q\subseteq K$, then $\inf_{\q\in Q} g(\z+\q)=\inf_{\q\in Q}g(\z+\q)=g_-(\z)$.
	In either case, $\vol_d(E^-_{\z})=v_d\,g_-(\z)^d$.
	Applying Tonelli's theorem yields
	\[
		\vol_{n+d}\Bigl(E\ominus\left(Q\times\{\0\}\right)\Bigr)
		=\vol_{n+d}(E^-)
		=\int_{\R^n}\vol_d(E^-_{\z})\dd\z
		=v_d\int_{\R^n} g_-(\z)^d\dd \z.
	\]
	By Lemma~\ref{lem:q-envelopes}, $\set{\z:\ g_-(\z)\ge s}=L_s\ominus Q$ for every $s\ge 0$, and $g_-$ is Lebesgue measurable.
	Applying Lemma~\ref{lem:layer-cake} to $g_-^d$ yields
	\begin{equation}\label{eq:volDerode}
		\vol_{n+d}(E\ominus\left(Q\times\{\0\}\right)) = v_d\,
		d\int_0^\infty s^{d-1}\vol_n(L_s\ominus Q)\dd s.
	\end{equation}

	We now compare these volumes to $\vol_{n+d}(E)$ by a scaling argument. Let $\epsilon:=1/(2k)$. The key observation is that the convexity of $E$ allows us to control its Minkowski sum by a simple dilation: since $E$ contains a translate of $2kQ\times\{\0\}$, adding $Q\times\{\0\}$ to $E$ enlarges it by at most a factor of $(1+\epsilon)$.
	Since $2kQ\subseteq K$, for every $\q\in Q$ the point $(2k\q,\0)$ lies in $E$.
	Fix $(\z,\x)\in E$ and $\q\in Q$. Then
	\[
		(\z,\x)+(\q,\0) = (\z,\x)+\epsilon(2k\q,\0) = (1+\epsilon)\Bigl(\frac{1}{1+\epsilon}(\z,\x)+\frac{\epsilon}{1+\epsilon}(2k\q,\0)\Bigr).
	\]
	The convex combination in parentheses belongs to $E$ by convexity of $E$. Hence
	\begin{equation}\label{eq:contain-thick}
		E^+\subseteq (1+\epsilon)E.
	\end{equation}
	Taking volumes gives
	\[
		\vol_{n+d}(E^+)\le (1+\epsilon)^{n+d}\vol_{n+d}(E).
	\]
	Substituting \eqref{eq:volD}, \eqref{eq:intKs}, and \eqref{eq:volDthick} into this inequality and recalling that $\epsilon=1/(2k)$ gives
	\[
		v_d\,d\int_0^\infty s^{d-1}\vol_n(L_s+Q)\dd s \le \Bigl(1+\frac{1}{2k}\Bigr)^{n+d} v_d\,d\int_0^\infty s^{d-1}\vol_n(L_s)\dd s.
	\]
	Canceling $v_d$ yields \eqref{eq:int-thick}.

	For erosion, assume $k>1/2$, so $\epsilon\in (0,1)$. Fix $(\z,\x)\in E$ and $\q\in Q$. Convexity of $E$ gives
	\[
		(1-\epsilon)(\z,\x)+\epsilon(2k\q,\0)\in E.
	\]
	Since $\epsilon(2k\q,\0)=(\q,\0)$, this means
	\[
		(1-\epsilon)(\z,\x)+(\q,\0)\in E\qquad \forall \q\in Q,
	\]
	which is equivalent to $(1-\epsilon)(\z,\x)\in E\ominus\left(Q\times\{\0\}\right)$. Therefore,
	\begin{equation}\label{eq:contain-erode}
		(1-\epsilon)E\subseteq E\ominus\left(Q\times\{\0\}\right).
	\end{equation}
	Taking volumes gives
	\[
		(1-\epsilon)^{n+d}\vol_{n+d}(E)\le \vol_{n+d}\Bigl(E\ominus\left(Q\times\{\0\}\right)\Bigr).
	\]
	Substituting \eqref{eq:volD}, \eqref{eq:intKs}, and \eqref{eq:volDerode} into this inequality and recalling that $\epsilon=1/(2k)$ gives
	\[
		\Bigl(1-\frac{1}{2k}\Bigr)^{n+d} v_d\,d\int_0^\infty s^{d-1}\vol_n(L_s)\dd s \le v_d\,d\int_0^\infty s^{d-1}\vol_n(L_s\ominus Q)\dd s.
	\]
	Canceling $v_d$ yields \eqref{eq:int-erode}.
\end{proof}

\begin{figure}[t]
	\centering
	\begin{tikzpicture}[x=1.38cm, y=1.18cm]

		\fill[figgold!27]
			plot[domain=-3:3, samples=120, variable=\tt]
			(\tt, {max(0, 3.0-0.48*max(0,abs(\tt)-0.5)*max(0,abs(\tt)-0.5))})
			-- plot[domain=3:-3, samples=120, variable=\tt]
			(\tt, {-max(0, 3.0-0.48*max(0,abs(\tt)-0.5)*max(0,abs(\tt)-0.5))}) -- cycle;
		\fill[figblue!18]
			plot[domain=-2.5:2.5, samples=120, variable=\tt]
			(\tt, {max(0, 3.0-0.48*\tt*\tt)})
			-- plot[domain=2.5:-2.5, samples=120, variable=\tt]
			(\tt, {-max(0, 3.0-0.48*\tt*\tt)}) -- cycle;
		\fill[figgreen!22]
			plot[domain=-2:2, samples=120, variable=\tt]
			(\tt, {max(0, 3.0-0.48*(abs(\tt)+0.5)*(abs(\tt)+0.5))})
			-- plot[domain=2:-2, samples=120, variable=\tt]
			(\tt, {-max(0, 3.0-0.48*(abs(\tt)+0.5)*(abs(\tt)+0.5))}) -- cycle;

		\draw[figblue!75!black, thick, dashed, domain=-3:3, samples=120, variable=\tt]
			plot (\tt, {max(0, 3.6-0.4*\tt*\tt)});
		\draw[figblue!75!black, thick, dashed, domain=-3:3, samples=120, variable=\tt]
			plot (\tt, {-max(0, 3.6-0.4*\tt*\tt)});
		\draw[figgold!80!black, semithick, domain=-3:3, samples=120, variable=\tt]
			plot (\tt, {max(0, 3.0-0.48*max(0,abs(\tt)-0.5)*max(0,abs(\tt)-0.5))});
		\draw[figgold!80!black, semithick, domain=-3:3, samples=120, variable=\tt]
			plot (\tt, {-max(0, 3.0-0.48*max(0,abs(\tt)-0.5)*max(0,abs(\tt)-0.5))});
		\draw[figblue, thick, domain=-2.5:2.5, samples=120, variable=\tt]
			plot (\tt, {max(0, 3.0-0.48*\tt*\tt)});
		\draw[figblue, thick, domain=-2.5:2.5, samples=120, variable=\tt]
			plot (\tt, {-max(0, 3.0-0.48*\tt*\tt)});
		\draw[figgreen, semithick, domain=-2:2, samples=120, variable=\tt]
			plot (\tt, {max(0, 3.0-0.48*(abs(\tt)+0.5)*(abs(\tt)+0.5))});
		\draw[figgreen, semithick, domain=-2:2, samples=120, variable=\tt]
			plot (\tt, {-max(0, 3.0-0.48*(abs(\tt)+0.5)*(abs(\tt)+0.5))});
		\draw[figblue!45!black, thick, dashed, domain=-2:2, samples=120, variable=\tt]
			plot (\tt, {max(0, 2.4-0.6*\tt*\tt)});
		\draw[figblue!45!black, thick, dashed, domain=-2:2, samples=120, variable=\tt]
			plot (\tt, {-max(0, 2.4-0.6*\tt*\tt)});

		\draw[->, thick, black!70] (-3.6,0) -- (4.2,0) node[right, font=\small, black!90] {$\z$};
		\draw[->, thick, black!70] (0,-3.9) -- (0,4.7) node[above, font=\small, black!90] {$\x$};
		\draw[black!45, thin] (2.5, 0.09) -- (2.5,-0.09) node[below, font=\scriptsize, black!60] {$k$};
		\draw[black!45, thin] (-2.5, 0.09) -- (-2.5,-0.09) node[below, font=\scriptsize, black!60] {$-k$};

		\node[figblue, anchor=south, font=\small] at (0, 3.08) {$g(\z)$};
		\node[figblue, font=\small, anchor=west] at (3.2, 0.5) {$E$};
		\draw[->, figblue, thin, shorten >=3pt]
			(3.18, 0.5) -- (2.1, 0.38);
		\node[figgold!80!black, font=\small, anchor=west, align=left] at (3.2, 1.9)
			{$E^+$\\$g_+(\z)$};
		\draw[->, figgold!80!black, thin, shorten >=3pt]
			(3.18, 1.9) -- (2.0, {3.0-0.48*(2.0-0.5)*(2.0-0.5)});
		\node[figgreen, font=\small, anchor=east, align=right] at (-3.2, 1.9)
			{$E^-$\\$g_-(\z)$};
		\draw[->, figgreen, thin, shorten >=3pt]
			(-3.18, 1.9) -- (-1.3, {3.0-0.48*(1.3+0.5)*(1.3+0.5)+0.05});
		\node[figblue!75!black, anchor=west, font=\scriptsize] at (3.2, 3.8) {$(1+\epsilon)E$};
		\draw[->, figblue!75!black, thin, densely dashed, shorten >=3pt]
			(3.18, 3.8) -- (1.8, {3.6-0.4*1.8*1.8+0.1});
		\node[figblue!45!black, anchor=east, font=\scriptsize] at (-3.2, 3.8) {$(1-\epsilon)E$};
		\draw[->, figblue!45!black, thin, densely dashed, shorten >=3pt]
			(-3.18, 3.8) -- (-0.8, {2.4-0.6*0.8*0.8-0.05});

		\node[black!65, font=\scriptsize, anchor=north] at (0, -4.3)
			{$(1-\epsilon)E \;\subseteq\; E^- \;\subseteq\; E \;\subseteq\; E^+ \;\subseteq\; (1+\epsilon)E,
			\qquad \epsilon = \tfrac{1}{2k}$};

		\draw[black!45, thin] (3.0, 0.07) -- (3.0,-0.07);
		\draw[figgold!80!black, thin, densely dotted] (2.5,-0.22) -- (2.5,-0.52);
		\draw[figgold!80!black, thin, densely dotted] (3.0,-0.07) -- (3.0,-0.52);
		\draw[<->, figgold!80!black, thin] (2.5,-0.45) -- (3.0,-0.45)
			node[midway, below, font=\scriptsize] {$Q$};
	\end{tikzpicture}
	\caption{Geometric view (shown for $n=d=1$) of the symmetrized body
		$E=\set{(\z,\x)\in \R^n\times\R^d:\ \z\in K,\ \|\x\|_2\le g(\z)}$ (blue) together with its Minkowski
		thickening $E^+=E+(Q\times\{\0\})$ (gold) and erosion
		$E^-=E\ominus(Q\times\{\0\})$ (green).
		For $\z\in \R^n$, the $\x$ fibers of $E$, $E^+$, and $E^-$ are centered Euclidean balls whenever they are nonempty, with radii given by $g(\z)$, $g_+(\z)$, and $g_-(\z)$ from \eqref{eq:gpm}.
		The dashed sets depict the dilations $(1\pm\epsilon)E$ with $\epsilon=1/(2k)$, and \eqref{eq:contain-thick} and \eqref{eq:contain-erode} compare these dilations with $E^+$ and $E^-$.}
	\label{fig:E-bodies}
\end{figure}

\subsection{Proof of the comparison inequality}\label{subsec:comparison-proof}

\begin{proof}[of Lemma~\ref{lem:disc-cont}]
	For $\z\in \R^n$, define
	\[
		f(\z):=\vol_d(D_{\z}),\qquad g(\z):=f(\z)^{1/d}.
	\]
	Applying Lemma~\ref{lem:slice-usc} to $D$ in place of $C$, the function $f$ is upper semicontinuous on $\R^n$.
	Applying Lemma~\ref{lem:brunn-concave} to $D$ in place of $C$, the function $g$ is concave on $K_D$ and upper semicontinuous on $\R^n$.
	For $s\ge 0$, define $L_s=\set{\z:\ g(\z)\ge s}$.
	In particular, $g$ is Lebesgue measurable.

	By Fubini's theorem,
	\[
		\vol_{n+d}(D)=\int_{\R^n}\vol_d(D_{\z})\dd \z=\int_{K_D} f(\z)\dd \z.
	\]
	Since $g^d=f$ on $\R^n$, we also have $\int_{K_D} f=\int_{\R^n}g^d$.
	Applying Lemma~\ref{lem:layer-cake} to $g^d$ gives
	\begin{equation}\label{eq:volD-layer}
		\vol_{n+d}(D) = \int_{K_D} f(\z)\dd \z = \int_{\R^n} g(\z)^d\dd \z = d\int_0^\infty s^{d-1}\vol_n(L_s)\dd s.
	\end{equation}

	On the other hand, since $D$ is bounded, only finitely many $\z\in \Z^n$ yield a nonempty fiber $D\cap\left(\{\z\}\times \R^d\right)$, and thus
	\begin{equation}\label{eq:nu-sum}
		\H_d(S_D) = \sum_{\z\in \Z^n}\vol_d\bigl(D\cap\left(\{\z\}\times \R^d\right)\bigr) = \sum_{\z\in \Z^n} f(\z) = \sum_{\z\in \Z^n} g(\z)^d.
	\end{equation}
	For each $\z\in \Z^n$ we use the elementary identity
	\[
		g(\z)^d = d\int_0^{g(\z)} s^{d-1}\dd s
		= d\int_0^\infty s^{d-1}\1_{\{g(\z)\ge s\}}\dd s,
	\]
	where the second equality rewrites the integral over $[0,g(\z)]$ using an indicator function.
	Since the integrand is nonnegative, Tonelli's theorem allows us to exchange the sum and the integral in \eqref{eq:nu-sum}.
	Moreover, for each $s>0$ we have
		$\sum_{\z\in \Z^n}\1_{\{g(\z)\ge s\}}=\#\set{\z\in \Z^n:\ g(\z)\ge s}=\#(L_s\cap \Z^n)$.
	This quantity is finite since $L_s\subseteq K_D$ for $s>0$ and $K_D$ is compact.
	Moreover, the value at $s=0$ does not affect the integral.
	This yields
		\begin{equation}\label{eq:nu-layer}
			\H_d(S_D) = d \int_0^\infty s^{d-1} \sum_{\z\in \Z^n} \1_{\{g(\z)\ge s\}} \dd s = d\int_0^\infty s^{d-1}\#(L_s\cap \Z^n)\dd s.
		\end{equation}

		For each $s>0$, the set $L_s$ is closed (Lemma~\ref{lem:brunn-concave}), so Lemma~\ref{lem:lattice-sandwich} applies and yields
		\[
			\vol_n(L_s\ominus Q)\le \#(L_s\cap \Z^n)\le \vol_n(L_s+Q).
		\]
	Multiplying by $d s^{d-1}$ and integrating in $s$ gives
		\begin{equation}\label{eq:nu-sandwich-int}
			d\int_0^\infty s^{d-1}\vol_n(L_s\ominus Q)\dd s \ \le\ \H_d(S_D) \ \le\ d\int_0^\infty s^{d-1}\vol_n(L_s+Q)\dd s.
		\end{equation}

	Assumption \eqref{eq:k-box} is equivalent to $\z_D+2kQ\subseteq K_D$.
	Thus Lemma~\ref{lem:integral-thick} applies to $K_D$ with center $\c=\z_D$ and parameter $k$.
	Combining \eqref{eq:nu-sandwich-int} with \eqref{eq:int-thick} and \eqref{eq:volD-layer} gives
		\[
			\H_d(S_D)\le \Bigl(1+\frac{1}{2k}\Bigr)^{n+d}d\int_0^\infty s^{d-1}\vol_n(L_s)\dd s = \Bigl(1+\frac{1}{2k}\Bigr)^{n+d}\vol_{n+d}(D).
		\]
	Similarly, combining \eqref{eq:nu-sandwich-int} with \eqref{eq:int-erode} and \eqref{eq:volD-layer} gives
		\[
			\H_d(S_D)\ge \Bigl(1-\frac{1}{2k}\Bigr)^{n+d}d\int_0^\infty s^{d-1}\vol_n(L_s)\dd s = \Bigl(1-\frac{1}{2k}\Bigr)^{n+d}\vol_{n+d}(D).
		\]
	This proves \eqref{eq:disc-cont-bound}.
\end{proof}

\section{Proof of main results}\label{sec:proof}

We now prove Theorem~\ref{thm:main} and Corollary~\ref{cor:main}.
Recall that $S:=C\cap(\Z^n\times\R^d)$. We build a mixed-integer point $\y^\star\in S$ by shrinking $C$ to a body $C'\subseteq C$ whose centroid has an integer $\R^n$-projection (Lemma~\ref{lem:centroid-rounding}), and we set $\y^\star:=\mathfrak{c}(C') \in S$. Fixing a halfspace $H$ with $\y^\star\in H$, Grünbaum's inequality (Theorem~\ref{thm:grunbaum}) controls the Lebesgue volumes of the two pieces of $C'$ cut by $\partial H$. A geometric bisection argument (Lemma~\ref{lem:half-bisect}) guarantees that one of the two pieces contains a translate of $\tfrac12 C'$, hence still has a sufficiently large projection. We then invoke the comparison lemma (Lemma~\ref{lem:disc-cont}) to convert these Lebesgue-volume bounds into mixed-integer volume bounds for $S$. A short case analysis handles whether the ``thick'' piece coincides with $C'\cap H$ or with its complement, and a Bernoulli-type estimate yields \eqref{eq:main-bound}.

\subsection{Auxiliary lemmas}

The first lemma allows us to restrict attention to halfspaces whose boundary passes through the candidate centerpoint.

\begin{lemma}[Boundary reduction for depth]\label{lem:boundary-reduction}
	For every $\y\in S$,
	\[
		h_S(\y)=\inf\set{\H_d(S\cap H):\ H \text{ is a closed halfspace and } \y\in \partial H}.
	\]
\end{lemma}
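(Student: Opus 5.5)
The proof establishes two inequalities between the infimum over all closed halfspaces containing $\y$ (this is $h_S(\y)$) and the infimum $h'$ over closed halfspaces with $\y\in\partial H$. The easy direction is $h_S(\y)\le h'$. Every closed halfspace $H$ with $\y\in\partial H$ contains $\y$, so the infimum defining $h_S(\y)$ ranges over a larger family.

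For the reverse inequality $h'\le h_S(\y)$, fix an arbitrary closed halfspace $H=\set{\y':\ \a^\T\y'\ge c}$ with $\y\in H$, so that $\a^\T\y\ge c$. Define the translated halfspace
\[
H':=\set{\y':\ \a^\T\y'\ge \a^\T\y}.
\]
It has the same normal vector, and its boundary passes through $\y$. Since $\a^\T\y\ge c$, every $\y'$ with $\a^\T\y'\ge\a^\T\y$ also satisfies $\a^\T\y'\ge c$, so $H'\subseteq H$. Hence $S\cap H'\subseteq S\cap H$.

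The next step is monotonicity of $\H_d$ under inclusion, which follows immediately from the fiber-sum definition \eqref{eq:Hd-fiber-sum}. For each $\z\in\Z^n$, the fiber $(S\cap H')_\z$ is contained in $(S\cap H)_\z$. These fibers are measurable, being intersections of the compact convex set $C_\z$ with closed halfspaces of $\R^d$ or with all of $\R^d$ (or empty). Therefore $\vol_d$ is monotone on each fiber, and summing over $\z$ gives
\[
\H_d(S\cap H')\le \H_d(S\cap H).
\]

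Consequently $h'\le \H_d(S\cap H')\le \H_d(S\cap H)$. Taking the infimum over all admissible $H$ yields $h'\le h_S(\y)$.

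The only point needing care is the degenerate case where the normal has $\a$ nonzero but the restriction of $\a^\T\y'$ to some fiber $\set{\z}\times\R^d$ is constant. In that case the fiber intersection is either all of $C_\z$ or empty, and monotonicity still holds. The argument is elementary, so I do not expect a genuine obstacle; the measurability remark is the main thing to state explicitly.
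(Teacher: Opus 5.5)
Your proposal is correct and matches the paper's argument: translate the halfspace along its normal so that its boundary passes through $\y$, observe that the translate is contained in the original halfspace, and conclude by monotonicity of $\H_d$. The only difference is that you also write out the trivial reverse inequality and the fiberwise measurability check, both of which the paper leaves implicit.
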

\begin{proof}
	Fix $\y\in S$, and let $H$ be a closed halfspace with $\y\in H$.
	Write $H=\set{\u\in \R^{n+d}:\ \a^\T\u\ge c}$ with $\a\neq \0$.
	If $\a^\T\y=c$, then $\y\in\partial H$ and there is nothing to show.
	Otherwise, $\a^\T\y>c$.
	Set $c_0:=\a^\T\y$ and let
	\[
		H_0:=\set{\u\in \R^{n+d}:\ \a^\T\u\ge c_0}.
	\]
	Then $\y\in \partial H_0$ and $H_0\subseteq H$.
	By monotonicity of $\H_d$, $\H_d(S\cap H_0)\le \H_d(S\cap H)$.
	Taking the infimum over all $H$ gives the claim.
\end{proof}

A natural candidate for the centerpoint $\y^\star$ is the centroid $\mathfrak{c}(C)$ of the convex body $C$, since Grünbaum's inequality guarantees a halfspace depth of at least $1/e$ for the $(n+d)$ dimensional Lebesgue measure. However, the centroid may not belong to the mixed-integer lattice $\Lambda=\Z^n\times \R^d$ because its $\z$ component is typically not an integer vector. The following lemma addresses this by shrinking $C$ toward a suitably chosen point $\w^\star$, producing a smaller body $C'$ whose centroid has an integer $\z$ component while retaining most of the original volume.

\begin{lemma}[Centroid rounding]\label{lem:centroid-rounding}
	Let $C\subset \R^{n+d}$ be a convex body and set $K=\proj_{\R^n}(C)$.
	Assume that there exist $\z_0\in \R^n$ and $k>1/2$ such that $\z_0+k\mathbb{B}_\infty^n\subseteq K$.
	Let $\epsilon:=1/(2k)$.
		Then there exists $\w^\star\in C$ such that the set $C':=(1-\epsilon)C+\epsilon \w^\star$ satisfies:
	\begin{enumerate}
		\item $C'\subseteq C$ and $\vol_{n+d}(C')=(1-\epsilon)^{n+d}\vol_{n+d}(C)$.
		\item $\proj_{\R^n}(\mathfrak{c}(C'))\in \Z^n$.
		\item $\proj_{\R^n}(C')$ contains a translate of $(k-\tfrac12)\mathbb{B}_\infty^n$.
	\end{enumerate}
\end{lemma}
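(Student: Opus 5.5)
The construction will be a homothety toward a well-chosen point $\w^\star$, with that point chosen from the central box in $K$.

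\emph{Centroid of $C'$.} Centroids commute with affine maps, so
\[
\mathfrak{c}(C')=(1-\epsilon)\mathfrak{c}(C)+\epsilon\w^\star .
\]
Write $\mathfrak{c}(C)=(\z_c,\x_c)$. Then $\proj_{\R^n}(\mathfrak{c}(C'))=(1-\epsilon)\z_c+\epsilon\z^\star$, where $\z^\star:=\proj_{\R^n}(\w^\star)$. We want this to equal some lattice point $\mathbf{p}\in\Z^n$, which requires
\[
\z^\star=\frac{\mathbf{p}-(1-\epsilon)\z_c}{\epsilon}=2k\mathbf{p}-(2k-1)\z_c .
\]

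\emph{Choosing $\mathbf{p}$.} As $\mathbf{p}$ ranges over $\Z^n$, these candidates $\z^\star$ form the lattice $2k\Z^n$ shifted by $-(2k-1)\z_c$. Every translate of the box $2kQ=k\mathbb{B}_\infty^n$ contains a point of any translate of $2k\Z^n$: round coordinatewise, exactly as in the proof of Lemma~\ref{lem:lattice-sandwich}. Applying this to the box $\z_0+k\mathbb{B}_\infty^n\subseteq K$ gives a $\mathbf{p}$ with $\z^\star\in\z_0+k\mathbb{B}_\infty^n\subseteq K$. Since $\z^\star\in K=\proj_{\R^n}(C)$, we can choose $\w^\star\in C$ with $\proj_{\R^n}(\w^\star)=\z^\star$; the $\x$-component of $\w^\star$ is arbitrary within the fiber $C_{\z^\star}$. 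This gives item~2.

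\emph{Item~1.} Since $\w^\star\in C$ and $C$ is convex, $C'=(1-\epsilon)C+\epsilon\w^\star\subseteq C$. The set $C'$ is a homothetic copy of $C$ with ratio $1-\epsilon$, and $1-\epsilon>0$ because $k>1/2$. Hence $\vol_{n+d}(C')=(1-\epsilon)^{n+d}\vol_{n+d}(C)$.

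\emph{Item~3.} Projection is linear, so
\[
\proj_{\R^n}(C')=(1-\epsilon)K+\epsilon\z^\star\supseteq(1-\epsilon)\z_0+\epsilon\z^\star+(1-\epsilon)k\mathbb{B}_\infty^n .
\]
Finally $(1-\epsilon)k=k-\tfrac12$, which is exactly the required radius.

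The only delicate step is choosing $\w^\star$ so that it is simultaneously in $C$ and makes the rounding work. This is resolved by the observation that the box of side $2k$ has the same period as the lattice $2k\Z^n$ of admissible $\z^\star$. The possible non-integrality of $2k$ is harmless, since we only need a translate of $2k\Z^n$.
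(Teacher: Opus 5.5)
Your proof is correct and is essentially the paper's argument. The paper maps the box $\z_0+2kQ$ forward under $\u\mapsto(1-\epsilon)\z_C+\epsilon\u$ to $(1-\epsilon)\z_C+\epsilon\z_0+Q$ and picks an integer point there; your observation that a box of side $2k$ meets the shifted lattice $2k\Z^n-(2k-1)\z_c$ is the same step read in the preimage coordinates, and items~1 and~3 are handled identically.
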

\begin{proof}
		For any $\w\in C$, define
		\[
			T_{\w}(\y):=(1-\epsilon)\y+\epsilon \w.
		\]
		For any $\y\in C$, the point $T_{\w}(\y)$ is a convex combination of $\y$ and $\w$. Since $C$ is convex and $\y,\w\in C$, we have $T_{\w}(\y)\in C$.
		Therefore, $T_{\w}(C)\subseteq C$.

		Let $DT_{\w}$ denote the Jacobian matrix of $T_{\w}$. Since $T_{\w}$ is affine, $DT_{\w}$ is constant and equals its linear part. Thus $DT_{\w}=(1-\epsilon)I$, so $|\det DT_{\w}|=(1-\epsilon)^{n+d}$. Hence
		\[
			\vol_{n+d}(T_{\w}(C))=(1-\epsilon)^{n+d}\vol_{n+d}(C),
		\]
		proving the first item for $T_{\w}(C)$.

	Write $\mathfrak{c}(C)=(\z_C,\x_C)$. By the change of variables formula, centroids are affine equivariant, i.e.,
		$\mathfrak{c}(T(C))=T(\mathfrak{c}(C))$ for any affine $T$. In particular,
		\[
			\mathfrak{c}(T_{\w}(C))=(1-\epsilon)\mathfrak{c}(C)+\epsilon \w,\qquad \proj_{\R^n}(\mathfrak{c}(T_{\w}(C)))=(1-\epsilon)\z_C+\epsilon \proj_{\R^n}(\w).
		\]
		We now choose $\w^\star$ so that the right hand side in the second equality is an integer vector.

	The assumption $\z_0+k\mathbb{B}_\infty^n\subseteq K$ is equivalent to $\z_0+2kQ\subseteq K$.
	Multiplying by $\epsilon=1/(2k)$ gives $\epsilon \z_0+Q\subseteq \epsilon K$, and therefore
	\[
		(1-\epsilon)\z_C+\epsilon \z_0+Q\subseteq (1-\epsilon)\z_C+\epsilon K.
	\]
	The left hand side is a translate of $Q$, so it contains an integer vector $\z^\star\in \Z^n$.
	By the inclusion above, $\z^\star\in (1-\epsilon)\z_C+\epsilon K$, and hence there exists $\u\in K$ such that
		\[
			(1-\epsilon)\z_C+\epsilon \u=\z^\star.
		\]
		Choose $\w^\star\in C$ with $\proj_{\R^n}(\w^\star)=\u$ (such a $\w^\star$ exists since $\u\in \proj_{\R^n}(C)=K$).
		Now fix this choice and set $C':=T_{\w^\star}(C)$.
		Then
		\[
			\proj_{\R^n}(\mathfrak{c}(C'))=\proj_{\R^n}(\mathfrak{c}(T_{\w^\star}(C)))=\z^\star\in \Z^n,
		\]
		proving the second item.

	Finally, since $\proj_{\R^n}$ is linear, we have
	\[
			\proj_{\R^n}(C')=\proj_{\R^n}(T_{\w^\star}(C))=(1-\epsilon)\proj_{\R^n}(C)+\epsilon \proj_{\R^n}(\w^\star)=(1-\epsilon)K+\epsilon \u.
		\]
	If $K$ contains a translate of $k\mathbb{B}_\infty^n$, then $\proj_{\R^n}(C')$ contains a translate of $(1-\epsilon)k\mathbb{B}_\infty^n$.
	Since $(1-\epsilon)k=k-\tfrac12$, this proves the third item.
\end{proof}

Figure~\ref{fig:centroid-rounding} illustrates the centroid rounding construction of Lemma~\ref{lem:centroid-rounding} for the case $n=d=1$.

\begin{figure}[t]
	\centering
	\begin{tikzpicture}[x=1.39cm, y=1.39cm]
		\fill[figblue!10]
			plot[smooth cycle, tension=0.85] coordinates {
				(-1.8,-0.3) (-1.2,1.6) (0.2,2.4) (1.8,2.1) (3.0,0.6) (2.6,-1.0) (1.2,-1.8) (-0.6,-1.6)
			};

		\fill[figgold!22]
			plot[smooth cycle, tension=0.85] coordinates {
				(-0.96,-0.08) (-0.48,1.44) (0.64,2.08) (1.92,1.84) (2.88,0.64) (2.56,-0.64) (1.44,-1.28) (0.00,-1.12)
			};

		\foreach \zz in {-2,-1,0,1,2,3} {
			\draw[figred!35, line width=0.8pt] (\zz, -2.6) -- (\zz, 3.0);
		}

		\draw[figblue, thick]
			plot[smooth cycle, tension=0.85] coordinates {
				(-1.8,-0.3) (-1.2,1.6) (0.2,2.4) (1.8,2.1) (3.0,0.6) (2.6,-1.0) (1.2,-1.8) (-0.6,-1.6)
			};
		\draw[figgold!80!black, thick]
			plot[smooth cycle, tension=0.85] coordinates {
				(-0.96,-0.08) (-0.48,1.44) (0.64,2.08) (1.92,1.84) (2.88,0.64) (2.56,-0.64) (1.44,-1.28) (0.00,-1.12)
			};

		\draw[->, thick, black!70] (-2.4,0) -- (3.8,0) node[right, font=\small, text=black!90] {$\z$};
		\draw[->, thick, black!70] (0,-2.4) -- (0,3.1) node[above, font=\small, text=black!90] {$\x$};

		\foreach \zz in {-2,-1,1,2,3} {
			\draw[black!60, thick] (\zz,0.06) -- (\zz,-0.06);
			\fill[figred!55] (\zz,0) circle (1.6pt);
		}

		\draw[figred!50, semithick, densely dashed] (0.65, 0.25) -- (2.40, 0.80);

		\draw[black!55, thin, decorate, decoration={brace, amplitude=3pt, mirror}]
			([yshift=-3pt] 0.65, 0.25) -- ([yshift=-3pt] 1.00, 0.36)
			node[midway, below=3.5pt, font=\scriptsize, black!65, sloped] {$\epsilon$};
		\draw[black!55, thin, decorate, decoration={brace, amplitude=3pt, mirror}]
			([yshift=-3pt] 1.00, 0.36) -- ([yshift=-3pt] 2.40, 0.80)
			node[midway, below=3.5pt, font=\scriptsize, black!65, sloped] {$1{-}\epsilon$};

		\fill[figblue] (0.65, 0.25) circle (2.2pt);
		\draw[figblue, densely dotted, thin] (0.65, 0.25) -- (0.65, 0);
		\fill[figblue] (0.65, 0) circle (1.2pt);
		\node[figblue, font=\small, anchor=south east] at (0.58, 0.28) {$\mathfrak{c}(C)$};

		\fill[figgold!80!black] (1.00, 0.36) circle (2.2pt);
		\draw[figgold!80!black, densely dotted, thin] (1.00, 0.36) -- (1.00, 0);
		\fill[figgold!80!black] (1.00, 0) circle (1.6pt);
		\node[figgold!80!black, font=\small, anchor=south west] at (1.08, 0.40) {$\mathfrak{c}(C')$};

		\fill[figred] (2.40, 0.80) circle (2.2pt);
		\node[figred, font=\small, anchor=south west] at (2.25, 0.85) {$\w^\star$};

		\node[figblue, font=\scriptsize, anchor=north east] at (0.88, -0.12) {$\z_C$};
		\node[figgold!80!black, font=\scriptsize, anchor=north west] at (1.0, -0.08) {$\z^\star\in\Z$};

		\node[figblue, font=\small] at (-1.5, 2.2) {$C$};
		\node[figgold!80!black, font=\small, anchor=west] at (3.2, 1.8) {$C'$};
		\draw[->, figgold!80!black, thin, shorten >=3pt] (3.15, 1.75) -- (2.60, 1.25);

		\node[figred!50, font=\scriptsize, anchor=south] at (-2, 2.9) {$\Z\times\R$};

		\node[black!65, font=\scriptsize, anchor=north] at (0.6, -2.7)
			{$C'=(1{-}\epsilon)C+\epsilon\w^\star,\qquad \epsilon = \tfrac{1}{2k}$};
	\end{tikzpicture}
\caption{Centroid rounding in Lemma~\ref{lem:centroid-rounding} (shown for $n=d=1$). The centroid $\mathfrak{c}(C)$ has $\z$ projection $\z_C\notin\Z^n$, and shrinking toward a point $\w^\star\in C$ with $\epsilon=1/(2k)$ yields $C'=(1-\epsilon)C+\epsilon\w^\star$. By affine equivariance, $\mathfrak{c}(C')=(1-\epsilon)\mathfrak{c}(C)+\epsilon\w^\star$, and $\w^\star$ is chosen so that $\proj_{\R^n}(\mathfrak{c}(C'))=\z^\star\in\Z^n$. The vertical lines indicate the fibers $\Z\times\R$.}
	\label{fig:centroid-rounding}
\end{figure}

The next lemma ensures that after cutting a convex body by a hyperplane through a specified point, at least one side remains ``geometrically large'' in the sense that it contains a half-scale copy of the original body. This is exactly what is needed to apply Lemma~\ref{lem:disc-cont} to one of the pieces.

\begin{lemma}[Halfspace bisection]\label{lem:half-bisect}
	Let $D\subset \R^{n+d}$ be a convex body and let $H$ and $\bar H$ be two complementary closed halfspaces sharing the same boundary hyperplane.
	Then there exists $\mathbf{t}\in \R^{n+d}$ such that either
	\[
		\mathbf{t}+\frac12 D\subseteq D\cap H\qquad\text{or}\qquad \mathbf{t}+\frac12 D\subseteq D\cap \bar H.
	\]
\end{lemma}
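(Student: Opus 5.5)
Write the common boundary hyperplane as $\partial H=\set{\u:\ \a^\T\u=c}$ with $H=\set{\a^\T\u\ge c}$ and $\bar H=\set{\a^\T\u\le c}$. The plan is to use homotheties $\mathbf{t}+\frac12 D=\frac12 D+\frac12\mathbf{p}$ centered at points $\mathbf{p}\in D$. By convexity, $\frac12 D+\frac12 \mathbf{p}\subseteq D$ for every $\mathbf{p}\in D$. So it only remains to choose $\mathbf{p}$ so that this half-size copy lies entirely on one side of $\partial H$.

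Let $m:=\min_{\u\in D}\a^\T\u$ and $M:=\max_{\u\in D}\a^\T\u$. Both are attained since $D$ is compact. Pick $\mathbf{p}^+\in D$ with $\a^\T\mathbf{p}^+=M$ and $\mathbf{p}^-\in D$ with $\a^\T\mathbf{p}^-=m$. For any $\u\in D$ we have
\[
\a^\T\bigl(\tfrac12\u+\tfrac12\mathbf{p}^+\bigr)\in\bigl[\tfrac{m+M}{2},\,M\bigr],\qquad
\a^\T\bigl(\tfrac12\u+\tfrac12\mathbf{p}^-\bigr)\in\bigl[m,\,\tfrac{m+M}{2}\bigr].
\]
Now split into two cases. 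If $c\le \frac{m+M}{2}$, take $\mathbf{t}:=\frac12\mathbf{p}^+$. Then every point of $\mathbf{t}+\frac12 D$ satisfies $\a^\T\u\ge \frac{m+M}{2}\ge c$, so $\mathbf{t}+\frac12 D\subseteq D\cap H$. Otherwise $c>\frac{m+M}{2}$; take $\mathbf{t}:=\frac12\mathbf{p}^-$, which gives $\a^\T\u\le\frac{m+M}{2}<c$ and hence $\mathbf{t}+\frac12 D\subseteq D\cap\bar H$.

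There is no real obstacle here. The only points to check are that the extremal points exist, which follows from compactness of $D$, and that the containment $\frac12 D+\frac12\mathbf{p}\subseteq D$ uses nothing beyond convexity. The argument does not even require $D\cap\partial H\neq\emptyset$: if the hyperplane misses $D$, one of the two cases still applies and the conclusion holds trivially.
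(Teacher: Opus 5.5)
Your proof is correct and is essentially the paper's own argument. The paper likewise takes maximizer and minimizer points $\y^\pm$ of $\a^\T\y$ over $D$ and forms the half-scale copies $\frac12 D+\frac12\y^\pm$. It then splits on whether $c$ is at most or at least the midpoint $\theta=\frac12(\a^\T\y^++\a^\T\y^-)$, exactly as you do.
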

\begin{proof}
	Let $H=\set{\y:\ \a^\T \y\ge c}$ and $\bar H=\set{\y:\ \a^\T \y\le c}$ with $\a\neq \0$.
	Choose points $\y^+,\y^-\in D$ attaining the maximum and minimum of $\a^\T \y$ over $D$.
	Define the two translates of $\frac12 D$ by
	\[
		D^+:=\frac12 D+\frac12 \y^+,\qquad D^-:=\frac12 D+\frac12 \y^-.
	\]
	Since $D$ is convex, for any $\y\in D$ we have $\frac12 \y+\frac12 \y^+\in D$, hence $D^+\subseteq D$.
	Similarly, $D^-\subseteq D$.

	Let $\theta:=\frac12\left(\a^\T \y^++\a^\T \y^-\right)$.
	If $c\le \theta$, we claim that $D^+\subseteq H$. Indeed, for $\y\in D$,
	\[
		\a^\T\Bigl(\frac12 \y+\frac12 \y^+\Bigr) = \frac12 \a^\T \y+\frac12 \a^\T \y^+ \ge \frac12 \a^\T \y^-+\frac12 \a^\T \y^+ = \theta\ge c,
	\]
	so $\frac12 \y+\frac12 \y^+\in H$ and hence $D^+\subseteq D\cap H$.

	If $c\ge \theta$, a symmetric argument shows $D^-\subseteq \bar H$, hence $D^-\subseteq D\cap \bar H$.
	In both cases we obtain the desired translate of $\frac12 D$.
\end{proof}

Figure~\ref{fig:halfspace-bisection-schematic} gives a visualization for Lemma~\ref{lem:half-bisect}. The same body $D$ is cut by a family of parallel hyperplanes orthogonal to $\a$, and the half scale translates $D^+$ and $D^-$ indicate which side is guaranteed to remain large when $c$ moves relative to $\theta$.

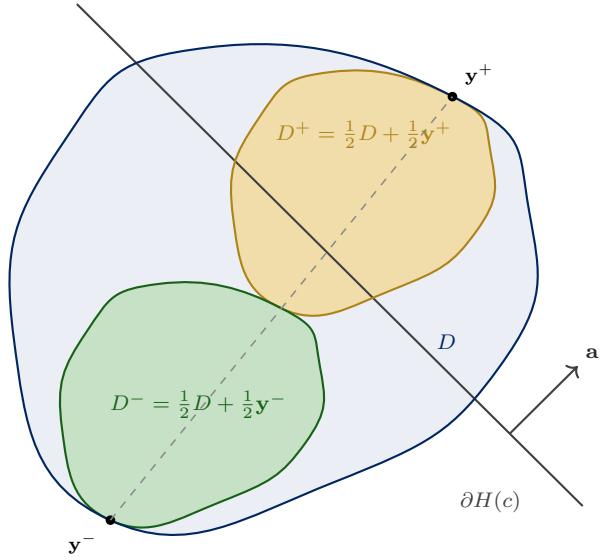
\begin{figure}[t]
	\centering
	\begin{tikzpicture}[x=1.28cm, y=1.28cm]
		\fill[figblue!8]
			plot[smooth cycle, tension=0.88] coordinates {
				(-2.5,-0.5) (-2.0,1.5) (-0.5,2.5) (2.0,2.0) (2.8,0.8) (2.5,-0.8) (0.8,-2.0) (-1.5,-2.4)
			};

		\begin{scope}[shift={(1.0055,0.9954)}, scale=0.5]
			\fill[figgold!38]
				plot[smooth cycle, tension=0.88] coordinates {
					(-2.5,-0.5) (-2.0,1.5) (-0.5,2.5) (2.0,2.0) (2.8,0.8) (2.5,-0.8) (0.8,-2.0) (-1.5,-2.4)
				};
		\end{scope}

		\begin{scope}[shift={(-0.7525,-1.1981)}, scale=0.5]
			\fill[figgreen!26]
				plot[smooth cycle, tension=0.88] coordinates {
					(-2.5,-0.5) (-2.0,1.5) (-0.5,2.5) (2.0,2.0) (2.8,0.8) (2.5,-0.8) (0.8,-2.0) (-1.5,-2.4)
				};
		\end{scope}

		\draw[black!75, thick] (-1.85,2.95) -- (3.35,-2.25);

		\begin{scope}[shift={(1.0055,0.9954)}, scale=0.5]
			\draw[figgold!80!black, thick]
				plot[smooth cycle, tension=0.88] coordinates {
					(-2.5,-0.5) (-2.0,1.5) (-0.5,2.5) (2.0,2.0) (2.8,0.8) (2.5,-0.8) (0.8,-2.0) (-1.5,-2.4)
				};
		\end{scope}
		\begin{scope}[shift={(-0.7525,-1.1981)}, scale=0.5]
			\draw[figgreen!70!black, thick]
				plot[smooth cycle, tension=0.88] coordinates {
					(-2.5,-0.5) (-2.0,1.5) (-0.5,2.5) (2.0,2.0) (2.8,0.8) (2.5,-0.8) (0.8,-2.0) (-1.5,-2.4)
				};
		\end{scope}

		\draw[figblue!85!black, thick]
			plot[smooth cycle, tension=0.88] coordinates {
				(-2.5,-0.5) (-2.0,1.5) (-0.5,2.5) (2.0,2.0) (2.8,0.8) (2.5,-0.8) (0.8,-2.0) (-1.5,-2.4)
			};

		\node[black!75, anchor=east, font=\small] at (2.8, -2.2) {$\partial H(c)$};

		\draw[->, thick, black!75] (2.6, -1.5) -- (3.3, -0.8);
		\node[anchor=south west, font=\small, text=black!85] at (3.3, -0.8) {$\a$};

		\fill[black] ( 2.0110, 1.9909) circle (1.8pt);
		\fill[black] (-1.5050,-2.3961) circle (1.8pt);
		\draw[black!45, semithick, dashed] (-1.5050,-2.3961) -- (2.0110,1.9909);
		\node[font=\scriptsize, anchor=south west] at ( 2.0600, 2.0350) {$\y^+$};
		\node[font=\scriptsize, anchor=north east] at (-1.5550,-2.4450) {$\y^-$};

		\node[font=\small, text=figblue!85!black] at (1.95,-0.55) {$D$};

		\node[figgold!80!black,  font=\small] at ( 1.1, 1.6) {$D^+=\tfrac12 D+\tfrac12\y^+$};
		\node[figgreen!70!black, font=\small] at (-0.6,-1.2) {$D^-=\tfrac12 D+\tfrac12\y^-$};

	\end{tikzpicture}
	\caption{Schematic for Lemma~\ref{lem:half-bisect}. A hyperplane $\partial H(c)$ orthogonal to $\a$ cuts $D$. The sets $D^+=\tfrac12D+\tfrac12\y^+$ and $D^-=\tfrac12D+\tfrac12\y^-$ are the two half scale translates that correspond to the alternatives in the lemma.}
	\label{fig:halfspace-bisection-schematic}
\end{figure}

\subsection{Proof of Theorem~\ref{thm:main}}

We begin by applying Lemma~\ref{lem:centroid-rounding} to shrink $C$ toward a suitable point $\w^\star$, producing a smaller body $C'\subseteq C$ whose centroid $\y^\star:=\mathfrak{c}(C')$ has an integer $\z$ component and therefore lies in $S$. For any closed halfspace $H$ containing $\y^\star$, Grünbaum's inequality bounds the $(n+d)$ dimensional volumes of $C'\cap H$ and $C'\cap \bar H$. Meanwhile, Lemma~\ref{lem:half-bisect} guarantees that one of these two pieces contains a translate of $\frac12 C'$, which in turn contains a sufficiently large box in its projection and hence satisfies the hypotheses of the comparison inequality (Lemma~\ref{lem:disc-cont}). We then convert between continuous and discrete volumes using Lemma~\ref{lem:disc-cont} and combine the bounds to obtain \eqref{eq:main-bound}.

\begin{proof}[of Theorem~\ref{thm:main}]
	Assume the hypotheses of Theorem~\ref{thm:main}.
	Since $k\ge \frac{3e}{2}(n+d)$, we have $k>1/2$, so Lemmas~\ref{lem:disc-cont} and~\ref{lem:centroid-rounding} apply.

	We first construct a point $\y^\star\in S$ with an integer projection.
	Let $\epsilon:=1/(2k)$. By Lemma~\ref{lem:centroid-rounding}, there exist $\w^\star\in C$ and a set
	\[
		C':=(1-\epsilon)C+\epsilon \w^\star \subseteq C
	\]
	such that $\proj_{\R^n}(\mathfrak{c}(C'))\in \Z^n$ and $\proj_{\R^n}(C')$ contains a translate of
	$(k-\tfrac12)\mathbb{B}_\infty^n$.
	Define
	\[
		\y^\star:=\mathfrak{c}(C').
	\]
	Then $\y^\star\in C'\subseteq C$, and its $\z$ component is an integer vector, hence $\y^\star\in \Lambda$.
	Therefore, $\y^\star\in S$.

	Let $H$ be a closed halfspace with $\y^\star\in H$.
	By Lemma~\ref{lem:boundary-reduction}, it suffices to prove \eqref{eq:main-bound} under the additional assumption $\y^\star\in \partial H$.
	Fix such an $H$ and write it as
	\[
		H=\set{\y:\ \a^\T(\y-\y^\star)\ge 0}\qquad (\a\neq \0).
	\]
		Let $\bar H=\set{\y:\ \a^\T(\y-\y^\star)\le 0}$ be the complementary closed halfspace.

		\[
			A:=C'\cap H,\qquad B:=C'\cap \bar H.
		\]

	Since $\y^\star=\mathfrak{c}(C')\in H\cap \bar H$, Grünbaum's inequality (Theorem~\ref{thm:grunbaum}) gives
	\begin{equation}\label{eq:grunbaum-AB}
		\vol_{n+d}(A)\ge \frac{1}{e}\vol_{n+d}(C'),\qquad \vol_{n+d}(B)\ge \frac{1}{e}\vol_{n+d}(C').
	\end{equation}
	Moreover, $C'=A\cup B$ and $A\cap B=C'\cap \partial H$ is contained in the boundary hyperplane $\partial H$, which has $(n+d)$ dimensional Lebesgue measure $0$.
	Therefore, $\vol_{n+d}(C')=\vol_{n+d}(A)+\vol_{n+d}(B)$, and we obtain
	\begin{equation}\label{eq:volB-upper}
		\vol_{n+d}(B) = \vol_{n+d}(C')-\vol_{n+d}(A) \le \Bigl(1-\frac{1}{e}\Bigr)\vol_{n+d}(C').
	\end{equation}

	Next, Lemma~\ref{lem:half-bisect} applied to $D=C'$ and the split $(H,\bar H)$ shows that one of $A$ or $B$
	contains a translate of $\frac12 C'$. Since $\proj_{\R^n}(C')$ contains a translate of $(k-\tfrac12)\mathbb{B}_\infty^n$, this implies that one of $\proj_{\R^n}(A)$ or $\proj_{\R^n}(B)$ contains a translate of $\frac{k-\tfrac12}{2}\mathbb{B}_\infty^n$.
	Since $k>3/2$, we have $\frac{k-\tfrac12}{2}>1/2$, so Lemma~\ref{lem:disc-cont} is applicable in either case.

	We write $S':=C'\cap \Lambda$. Since $C'\subseteq C$, we have $S'\subseteq S$ and hence
	\begin{equation}\label{eq:S-vs-Sprime}
		\H_d(S\cap H)\ge \H_d(S'\cap H).
	\end{equation}

	We now distinguish the two cases, depending on whether the large projection occurs on $A$ or on $B$.

	\noindent\textbf{Case 1: the large projection occurs on $A$.}
	Since $\proj_{\R^n}(A)$ contains a translate of $\frac{k-\tfrac12}{2}\mathbb{B}_\infty^n$, Lemma~\ref{lem:disc-cont} yields
	\[
		\H_d(A\cap \Lambda)\ge \Bigl(1-\frac{1}{k-\tfrac12}\Bigr)^{n+d} \vol_{n+d}(A)=\Bigl(\frac{2k-3}{2k-1}\Bigr)^{n+d}\vol_{n+d}(A).
	\]
	Since $S'=C'\cap \Lambda$ and $A=C'\cap H$, we have $S'\cap H=A\cap \Lambda$.
	Therefore, $\H_d(S'\cap H)=\H_d(A\cap \Lambda)$, and hence, using \eqref{eq:grunbaum-AB},
	\begin{equation}\label{eq:case1-volCprime}
		\H_d(S'\cap H) \ge \frac{1}{e}\Bigl(\frac{2k-3}{2k-1}\Bigr)^{n+d}\vol_{n+d}(C').
	\end{equation}

	We relate $\vol_{n+d}(C')$ to $\H_d(S)$. Applying Lemma~\ref{lem:disc-cont} to $C$ with parameter $k$ gives
	\[
		\H_d(S)\le \Bigl(1+\frac{1}{2k}\Bigr)^{n+d} \vol_{n+d}(C),\qquad\text{so}\qquad \vol_{n+d}(C)\ge \Bigl(\frac{2k}{2k+1}\Bigr)^{n+d} \H_d(S).
	\]
	Since $\vol_{n+d}(C')=(1-\frac{1}{2k})^{n+d}\vol_{n+d}(C)$, we obtain
	\begin{equation}\label{eq:volCprime-vs-nuS}
		\vol_{n+d}(C') \ge \Bigl(\frac{2k-1}{2k}\Bigr)^{n+d}\Bigl(\frac{2k}{2k+1}\Bigr)^{n+d} \H_d(S) = \Bigl(\frac{2k-1}{2k+1}\Bigr)^{n+d} \H_d(S).
	\end{equation}
	Combining \eqref{eq:case1-volCprime}, \eqref{eq:volCprime-vs-nuS}, and \eqref{eq:S-vs-Sprime} gives
	\begin{equation}\label{eq:case1-final}
		\H_d(S\cap H) \ge \frac{1}{e}\Bigl(\frac{2k-3}{2k+1}\Bigr)^{n+d} \H_d(S).
	\end{equation}

		\noindent\textbf{Case 2: the large projection occurs on $B$.}

		Since $S'=C'\cap \Lambda$, $B\cap \Lambda=(C'\cap \bar H)\cap \Lambda=S'\cap \bar H$, and $H\cup \bar H=\R^{n+d}$, we have
			$S'=(S'\cap H)\cup(S'\cap \bar H)=(S'\cap H)\cup(B\cap \Lambda)$.
		By subadditivity of $\H_d$,
		\begin{equation}\label{eq:nu-lower-by-B}
			\H_d(S'\cap H)\ge \H_d(S')-\H_d(B\cap \Lambda).
		\end{equation}

	We now bound the two terms on the right hand side.
	First, since $\proj_{\R^n}(C')$ contains a translate of $(k-\tfrac12)\mathbb{B}_\infty^n$, Lemma~\ref{lem:disc-cont} yields
	\begin{equation}\label{eq:nuSprime-lower}
		\H_d(S')\ge \Bigl(1-\frac{1}{2k-1}\Bigr)^{n+d} \vol_{n+d}(C') = \Bigl(\frac{2k-2}{2k-1}\Bigr)^{n+d}\vol_{n+d}(C').
	\end{equation}
	Second, since $\proj_{\R^n}(B)$ contains a translate of $\frac{k-\tfrac12}{2}\mathbb{B}_\infty^n$, Lemma~\ref{lem:disc-cont} yields
	\[
		\H_d(B\cap \Lambda)\le \Bigl(1+\frac{1}{k-\tfrac12}\Bigr)^{n+d} \vol_{n+d}(B) = \Bigl(\frac{2k+1}{2k-1}\Bigr)^{n+d} \vol_{n+d}(B).
	\]
	Using \eqref{eq:volB-upper}, we obtain
	\begin{equation}\label{eq:nuB-upper}
		\H_d(B\cap \Lambda) \le \Bigl(1-\frac{1}{e}\Bigr)\Bigl(\frac{2k+1}{2k-1}\Bigr)^{n+d} \vol_{n+d}(C').
	\end{equation}
	Substituting \eqref{eq:nuSprime-lower} and \eqref{eq:nuB-upper} into \eqref{eq:nu-lower-by-B} gives
	\[
		\H_d(S'\cap H) \ge \left[
			\Bigl(\frac{2k-2}{2k-1}\Bigr)^{n+d}-\Bigl(1-\frac{1}{e}\Bigr)\Bigl(\frac{2k+1}{2k-1}\Bigr)^{n+d}
			\right]\vol_{n+d}(C').
	\]
	Using \eqref{eq:volCprime-vs-nuS} and $S'\subseteq S$ yields
	\begin{equation}\label{eq:case2-final}
		\H_d(S\cap H) \ge \left[
			\Bigl(\frac{2k-2}{2k+1}\Bigr)^{n+d}-\Bigl(1-\frac{1}{e}\Bigr)
			\right]\H_d(S).
	\end{equation}

	\noindent\textbf{A uniform bound.}
	From \eqref{eq:case1-final} and \eqref{eq:case2-final}, for every closed halfspace $H$ with $\y^\star\in \partial H$,
	\[
		\frac{\H_d(S\cap H)}{\H_d(S)} \ge \min\left\{
		\frac{1}{e}\Bigl(\frac{2k-3}{2k+1}\Bigr)^{n+d},\
		\Bigl(\frac{2k-2}{2k+1}\Bigr)^{n+d}-\Bigl(1-\frac{1}{e}\Bigr)
		\right\}.
	\]

	We bound both terms from below using Bernoulli's inequality $(1-t)^{n+d}\ge 1-(n+d)t$ for $t\in [0,1]$.

	For the first term,
	\[
		\Bigl(\frac{2k-3}{2k+1}\Bigr)^{n+d} = \Bigl(1-\frac{4}{2k+1}\Bigr)^{n+d} \ge 1-\frac{4(n+d)}{2k+1} \ge 1-\frac{2(n+d)}{k},
	\]
	and hence
	\[
		\frac{1}{e}\Bigl(\frac{2k-3}{2k+1}\Bigr)^{n+d} \ge \frac{1}{e}-\frac{2(n+d)}{ek}\ge \frac{1}{e}-\frac{3(n+d)}{2k},
	\]
	where the last inequality uses $\frac{2}{e}\le \frac{3}{2}$.

	For the second term,
	\[
		\Bigl(\frac{2k-2}{2k+1}\Bigr)^{n+d} = \Bigl(1-\frac{3}{2k+1}\Bigr)^{n+d} \ge 1-\frac{3(n+d)}{2k+1} \ge 1-\frac{3(n+d)}{2k},
	\]
	and therefore
	\[
		\Bigl(\frac{2k-2}{2k+1}\Bigr)^{n+d}-\Bigl(1-\frac{1}{e}\Bigr) \ge \frac{1}{e}-\frac{3(n+d)}{2k}.
	\]

	Combining the two bounds shows that for every closed halfspace $H$ with $\y^\star\in \partial H$,
	\[
		\H_d(S\cap H)\ge \Bigl(\frac{1}{e}-\frac{3(n+d)}{2k}\Bigr)\H_d(S).
	\]
	By Lemma~\ref{lem:boundary-reduction}, the same inequality holds for every closed halfspace $H$ with $\y^\star\in H$.
	This proves \eqref{eq:main-bound} and completes the proof of Theorem~\ref{thm:main}.
\end{proof}

\subsection{Proofs of Corollaries~\ref{cor:main} and~\ref{cor:unimodular}}
\begin{proof}[of Corollary~\ref{cor:main}]

	Let $\y^\star\in S$ be the point given by Theorem~\ref{thm:main}.
	Since $k\ge 3e(n+d)$, we have $\frac{1}{e}-\frac{3(n+d)}{2k}\ge \frac{1}{2e}$, and therefore Theorem~\ref{thm:main} implies
	\[
		\H_d(S\cap H)\ge \frac{1}{2e}\,\H_d(S)
	\]
	for every closed halfspace $H\subseteq \R^{n+d}$ with $\y^\star\in H$, and hence $h_S(\y^\star)\ge \frac{1}{2e}\,\H_d(S)$.

	Since $\y^\star\in S$, we have $S\neq \emptyset$.
	If $\mathcal{C}(S)\neq \emptyset$, then for any $\hat \y\in \mathcal{C}(S)$ we have
	\[
		h_S(\hat \y)\ge h_S(\y^\star)\ge \frac{1}{2e}\,\H_d(S).
	\]
	Since $n\ge 1$ implies $2^n\ge 2$, we also have $\frac{1}{2e}\ge \frac{1}{2^n e}$, which yields \eqref{eq:centerpoint-bound}.
	Thus, it remains to show that $\mathcal{C}(S)\neq \emptyset$.

	For a unit vector $\u\in \R^{n+d}$ and $t\in \R$, define the closed halfspace
	\[
		H(\u,t):=\set{\y\in \R^{n+d}:\ \u^\T \y \ge t}.
	\]

	The set $S=C\cap \Lambda$ is compact because $C$ is compact and $\Lambda$ is closed.

	Define the set function $\mu(A):=\H_d(S\cap A)$ for measurable $A\subseteq \R^{n+d}$.
	Then $\mu$ is a finite measure because $\H_d(S)<\infty$.

	Fix $\u\in \mathbb{S}^{n+d-1}$ and define $F_{\u}(t):=\mu(H(\u,t))=\H_d(S\cap H(\u,t))$.
	The map $F_{\u}$ is nonincreasing.
	Moreover, since $H(\u,t-\epsilon)\downarrow H(\u,t)$ as $\epsilon\downarrow 0$, continuity from above gives $F_{\u}(t-\epsilon)\downarrow F_{\u}(t)$.
	Therefore, if $t_k\to t$ and $\epsilon>0$, then for all sufficiently large $k$ we have $t_k\ge t-\epsilon$ and hence
	$F_{\u}(t_k)\le F_{\u}(t-\epsilon)$.
	Taking $\limsup_{k\to\infty}$ and then letting $\epsilon\downarrow 0$ yields $\limsup_{k\to\infty}F_{\u}(t_k)\le F_{\u}(t)$, so $F_{\u}$ is upper semicontinuous.

	Define $\phi_{\u}(\y):=F_{\u}(\u^\T \y)=\H_d\bigl(S\cap H(\u,\u^\T \y)\bigr)$.
	Since $\y\mapsto \u^\T \y$ is continuous, $\phi_{\u}$ is upper semicontinuous.

	By Lemma~\ref{lem:boundary-reduction}, for $\y\in S$,
	\[
		h_S(\y)=\inf_{\u\in \mathbb{S}^{n+d-1}} \phi_{\u}(\y),
	\]
	so $h_S$ is upper semicontinuous.
	An upper semicontinuous function on a compact set attains its maximum, so $\argmax_{\y\in S} h_S(\y)$ is nonempty, hence $\mathcal{C}(S)\neq \emptyset$.
\end{proof}

\begin{proof}[of Corollary~\ref{cor:unimodular}]
	Consider the invertible linear map $T_U:\R^{n+d}\to\R^{n+d}$ defined by
	\[
		T_U(\z,\x):=(U^{-1}\z,\,\x),\qquad T_U^{-1}(\z,\x)=(U\z,\,\x).
	\]
	Since $|\det U|=1$, Cramer's rule gives $U^{-1}\in\Z^{n\times n}$, so $T_U(\Lambda)=\Lambda$.

	Set $\widetilde C:=T_U(C)$, $\widetilde S:=\widetilde C\cap\Lambda$, and $\widetilde K:=\proj_{\R^n}(\widetilde C)=U^{-1}K$.
	Applying $U^{-1}$ to \eqref{eq:big-box-unimod} gives
	\[
		U^{-1}\z_0+k\,\mathbb{B}_\infty^n
		= U^{-1}\bigl(\z_0+k\,U\mathbb{B}_\infty^n\bigr)
		\subseteq U^{-1}K = \widetilde K,
	\]
	so $\widetilde C$ satisfies the conditions \eqref{eq:big-box} of Theorem~\ref{thm:main} with the same $k$.
	By Theorem~\ref{thm:main}, there exists $\widetilde\y^\star\in\widetilde S$ such that for every closed halfspace $\widetilde H$ with $\widetilde\y^\star\in\widetilde H$,
	\[
		\H_d(\widetilde S\cap\widetilde H)\ge\Bigl(\tfrac{1}{e}-\tfrac{3(n+d)}{2k}\Bigr)\H_d(\widetilde S).
	\]

	Set $\y^\star:=T_U^{-1}(\widetilde\y^\star)$. Since $\widetilde\y^\star\in\widetilde C=T_U(C)$, we have $\y^\star\in C$. Since $T_U^{-1}(\Lambda)=\Lambda$ and $\widetilde\y^\star\in\Lambda$, we have $\y^\star\in\Lambda$. Hence $\y^\star\in S$.

		For every measurable set $A\subseteq\Lambda$ and each $\z\in\Z^n$, we have $(T_U(A))_\z=A_{U\z}$, since $(\z,\x)\in T_U(A)$ if and only if $(U\z,\x)\in A$. Because $\z\mapsto U\z$ is a bijection of $\Z^n$,
		\[
			\H_d(T_U(A))
			=\sum_{\z\in\Z^n}\vol_d(A_{U\z})
			=\sum_{\z\in\Z^n}\vol_d(A_\z)
			=\H_d(A).
		\]
		Fix any closed halfspace $H$ with $\y^\star\in H$, and set $\widetilde H:=T_U(H)$.
		Then $\widetilde H$ is a closed halfspace containing $\widetilde\y^\star=T_U(\y^\star)$, and we have
		$T_U(S)=\widetilde S$ and $T_U(S\cap H)=\widetilde S\cap\widetilde H$.
		Therefore,
		\[
			\begin{aligned}
				\H_d(S\cap H)
				 & =\H_d(T_U(S\cap H))
				=\H_d(\widetilde S\cap\widetilde H)\\
				&\ge \Bigl(\tfrac{1}{e}-\tfrac{3(n+d)}{2k}\Bigr)\H_d(\widetilde S)
				=\Bigl(\tfrac{1}{e}-\tfrac{3(n+d)}{2k}\Bigr)\H_d(S).
			\end{aligned}
		\]
		This yields the desired inequality for $H$.
\end{proof}

\section{Sharpness of the linear threshold}\label{sec:necessity}

This section shows that the linear scaling in Theorem~\ref{thm:main} cannot, in general, be improved if one only assumes that $\proj_{\R^n}(C)$ contains a translate of $k\mathbb{B}_\infty^n$.
We construct a family of convex bodies whose mixed-integer volume is concentrated on $2^n$ ``central'' integer fibers, while for every point in $S$ one can choose a halfspace containing it that intersects this central family in at most one fiber.
Choosing the continuous dimension $d$ large relative to $kn$ makes the remaining fibers negligible, leading to an exponentially small depth ratio of order $\exp(-\Theta((n+d)/k))$. Thus, if $k$ is sublinear in $n+d$, then we will have exponentially small halfspace depth for any point in $S$.

\begin{theorem}[Necessity of linear $\ell_\infty$ radius]\label{thm:linear-necessity-total-dim}
	For any  constant $\alpha>\log 2$, there exist constants $\gamma_\alpha,M_\alpha>0$ (depending only on $\alpha$) with the following property.

	For every integer $k\ge 2$ and every integer $m\ge 2(1+\alpha k)$, there exist integers $n,d\ge 1$ with $n+d=m$ and a convex body
	$C\subset\R^{n+d}$ such that its projection $K:=\proj_{\R^n}(C)$ contains a translate of $k\mathbb{B}_\infty^n$ and, writing $S:=C\cap(\Z^n\times\R^d)$, one has the quantitative upper bound
	\begin{equation}\label{eq:exp-upper}
		\sup_{\y\in S}\frac{h_S(\y)}{\H_d(S)}\ \le\ M_\alpha\exp\!\Bigl(-\gamma_\alpha\,\frac{m}{k}\Bigr).
	\end{equation}
\end{theorem}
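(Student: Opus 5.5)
Following the outline at the start of this section, we construct a body whose mixed-integer volume sits almost entirely on the $2^n$ fibers over the vertices of a unit cube, and we make the continuous dimension $d$ large so that the remaining fibers become negligible. Concretely, take $n := \lceil \alpha k\rceil$ (or a nearby integer chosen so that $d:=m-n\ge n$, which is possible since $m\ge 2(1+\alpha k)$). Let $P:=\set{\z\in\R^n:\ \|\z-\tfrac12\1\|_\infty\le k+\tfrac12}$, so that $K=P$ contains a translate of $k\mathbb{B}_\infty^n$. Define the concave profile
\[
	g(\z):=\max\Bigl\{0,\ 1-\beta\,\operatorname{dist}_\infty\bigl(\z,[0,1]^n\bigr)\Bigr\}\quad\text{on }P,
\]
with $\beta$ of order $1/k$ chosen so that $g$ is positive but small near $\partial P$, say $g\ge \tfrac12$ on $P$. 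Set $C:=\set{(\z,\x):\ \z\in P,\ \|\x\|_2\le g(\z)}$. Then $C$ is a convex body, by the same argument used for $E$ in the proof of Lemma~\ref{lem:integral-thick}, because $g$ is concave on $P$. Its fibers satisfy $\vol_d(C_\z)=v_d\,g(\z)^d$. Consequently every vertex $\z\in\{0,1\}^n$ has fiber volume exactly $v_d$, while an integer point at $\ell_\infty$-distance $j\ge1$ from the cube has fiber volume $v_d(1-\beta j)^d\le v_d e^{-\beta j d}$.

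The upper bound on depth follows the cube example from the introduction, lifted to the fibers. Take any $\y=(\z,\x)\in S$. Pick a vertex $\mathbf{v}\in\{0,1\}^n$ and a linear functional $\a=(\a_\z,\0)$ on the $\z$-coordinates such that the halfspace $\set{\a^\T\cdot\ \ge \a^\T\y}$ contains $\y$ but contains at most one vertex of $\{0,1\}^n$. Concretely, choose the vertex $\mathbf{v}$ "farthest" from $\z$ coordinatewise, and let $\a_\z$ have entries $\pm1$ pointing toward $\mathbf{v}$, with the threshold shifted slightly when $\z$ itself lies in the cube. Then $\H_d(S\cap H)\le v_d+\sum_{\z'\notin\{0,1\}^n}v_d\,g(\z')^d$, while $\H_d(S)\ge 2^n v_d$. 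The tail is controlled by counting integer points at $\ell_\infty$-distance $j$ from the cube, of which there are at most $(2j+2)^n$. This gives
\[
	\text{tail}\le v_d\sum_{j\ge1}(2j+2)^n e^{-\beta jd},
\]
which we want to be at most $O(v_d)$. Since $\beta\sim 1/k$ and $d\ge m/2$, the exponent $\beta d\sim m/k$ has to dominate $n\log(2j+2)/j$. This is where the choice of $\beta$, of $n$ versus $d$, and of the constants $\gamma_\alpha, M_\alpha$ enters. The ratio is then at most $(1+O(1))2^{-n}$.

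The main obstacle is balancing these parameters. To get $2^{-n}=\exp(-\Theta(m/k))$ we need $n=\Theta(m/k)$, not $n\sim\alpha k$. So we actually take $n\approx c\,m/k$ for a suitable small constant $c$ and $d=m-n$, and we keep the box condition by setting the half-width of $P$ equal to $k$ regardless of $n$. The slope $\beta$ must then satisfy $\beta k<1$ so that $C$ remains a body over all of $P$, while $\beta d\gtrsim n\log k$ must hold for the tail to be summable.

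I would try $\beta=1/(2k)$, so that $\beta d\approx m/(2k)$. The tail condition then reads roughly $m/(2k)\ge n\log 4+\text{const}$. This forces $n\le m/(2k\log 4)$ up to constants, which leaves $n$ still linear in $m/k$; the hypothesis $\alpha>\log2$ and $m\ge 2(1+\alpha k)$ should be exactly what guarantees $n\ge1$ and makes the geometric series converge. If the crude count $(2j+2)^n$ is too lossy, I would refine it by summing only over shells, using $(2j+1)^n-(2j-1)^n$, and I would carry out the computation with the explicit factor $(1-\beta j)^d$ rather than its exponential bound.
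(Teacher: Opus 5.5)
Your construction is the paper's, translated: the paper uses $t(\z)=1-\|\z-\c\|_\infty/k$ on $\c+k\mathbb{B}_\infty^n$ with $\c=(k+\tfrac12)\1$, so its $2^n$ heavy fibers sit over $\{k,k+1\}^n$, and your flat top on $[0,1]^n$ is an equivalent variant. However, two steps do not work as written.

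First, the halfspace recipe is reversed. Suppose $\z\in\{0,1\}^n$ and $\a_\z$ points toward the farthest vertex $\1-\z$, i.e.\ $(\a_\z)_i=1-2z_i$. Then $\z$ is the \emph{minimizer} of $\a_\z^\T(\cdot)$ over $\{0,1\}^n$. So $\{\a^\T\cdot\ge \a^\T\y\}$ contains all $2^n$ heavy fibers, and no shift of the threshold keeps $\y$ while removing them. Take instead $(\a_\z)_i=2z_i-1$, which makes $\z$ the unique maximizer. If $\z\notin\{0,1\}^n$, then some $z_i\le -1$ or $z_i\ge 2$, and the coordinate halfspace $\{z_i'\le -1\}$ or $\{z_i'\ge 2\}$ contains $\y$ and no heavy fiber.

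Second, the parameter step is never closed. Your expectation that $\alpha>\log 2$ and $m\ge 2(1+\alpha k)$ are exactly what make it work is false for your choices. Requiring $\text{tail}=O(v_d)$ normalizes by the wrong quantity and forces $\beta d>n\log 4$; the binding term is $j=1$, and nothing involving $\log k$ is needed. With $\beta=1/(2k)$ and $n\ge 1$ this requires $(m-1)/(2k)>\log 4$. That is incompatible with $m$ near $2(1+\alpha k)$ when $\alpha$ is close to $\log 2$, since then $(m-1)/(2k)$ is only about $\alpha+\tfrac14$.

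The repair is what the paper does:
\begin{itemize}
\item Take slope $\beta=1/k$. This is legitimate because $\operatorname{dist}_\infty(\z,[0,1]^n)\le k$ on $P$, so $g\ge 0$ is concave without the $\max$; the restriction $\beta k<1$ is unnecessary.
\item Compare the tail with $2^n v_d$ rather than $v_d$. Then the distance-$j$ shell contributes at most $(j+1)^n e^{-jd/k}$.
\item Set $n:=\lfloor m/(1+\alpha k)\rfloor$, so that $n\ge 2$ and $d\ge \alpha kn$. Each term is then at most $e^{-(\alpha-\log 2)jn}$.
\end{itemize}
This gives the ratio bound $2^{-n}+e^{-(\alpha-\log 2)n}/(1-e^{-(\alpha-\log 2)})$. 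Since $n\ge m/(2(1+\alpha k))\ge m/(2(\alpha+1)k)$, this is the stated form.

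Alternatively, you can keep $\beta=1/(2k)$ and handle the regime of bounded $m/k$ by enlarging $M_\alpha$, since the ratio never exceeds $1$. In that version $\alpha$ plays no role in the construction.
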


Figure~\ref{fig:necessity-construction} illustrates this construction for $n=d=1$ with $k=3$.

\begin{figure}[t]
	\centering
		\begin{tikzpicture}[x=1.1cm, y=1.8cm, scale=1.1, transform shape]

		\fill[figgreen!13] (4.0, -1.22) rectangle (7.6, 1.52);

		\fill[figblue!10] (0.5, 0) -- (3.5, 1.0) -- (6.5, 0) -- (3.5, -1.0) -- cycle;

		\foreach \zz in {1,2,3,4,5,6} {
			\draw[black!14, densely dotted] (\zz, -1.20) -- (\zz, 0.98);
		}

		\draw[figred!45, line width=1.1pt] (1.00, -0.167) -- (1.00, 0.167);
		\draw[figred!45, line width=1.1pt] (2.00, -0.500) -- (2.00, 0.500);
		\draw[figred!45, line width=1.1pt] (5.00, -0.500) -- (5.00, 0.500);
		\draw[figred!45, line width=1.1pt] (6.00, -0.167) -- (6.00, 0.167);
		\foreach \zz in {1,2,5,6} { \fill[figred!50] (\zz, 0) circle (1.3pt); }

		\draw[figred, line width=1.9pt] (3.00, -0.833) -- (3.00, 0.833);
		\draw[figred, line width=1.9pt] (4.00, -0.833) -- (4.00, 0.833);
		\foreach \zz in {3,4} { \fill[figred] (\zz, 0) circle (1.6pt); }

		\draw[figblue, thick] (0.5, 0) -- (3.5, 1.0) -- (6.5, 0) -- (3.5, -1.0) -- cycle;

		\draw[figgreen!70!black, line width=1.2pt, dashed] (4.0, -1.20) -- (4.0, 1.50);

		\draw[figred, line width=1.9pt] (4.00, -0.833) -- (4.00, 0.833);
		\node[figgreen!70!black, font=\scriptsize, anchor=west] at (4.16, -1.10) {$\partial H$};

		\draw[->, thick, black!70] (-0.3, 0) -- (7.7, 0)
			node[right, font=\small, text=black!90] {$\z$};
		\draw[->, thick, black!70] (0.0, -1.15) -- (0.0, 1.65)
			node[above, font=\small, text=black!90] {$\x$};

		\foreach \zz in {1,2,3,4,5,6} {
			\draw[black!60] (\zz, 0.05) -- (\zz, -0.05);
		}
		\node[below=2pt, font=\scriptsize, black!75] at (1, -0.04) {$1$};
		\node[below=2pt, font=\scriptsize, black!75] at (2, -0.04) {$2$};
		\node[below=2pt, font=\scriptsize, black!75] at (2.90, -0.04) {$k$};
		\node[below=2pt, font=\scriptsize, black!75] at (4.10, -0.04) {$k{+}1$};
		\node[below=2pt, font=\scriptsize, black!75] at (5, -0.04) {$2k{-}1$};
		\node[below=2pt, font=\scriptsize, black!75] at (6, -0.04) {$2k$};

		\draw[black!30, densely dotted] (0.5, 0.02) -- (0.5, -1.20);
		\draw[black!30, densely dotted] (6.5, 0.02) -- (6.5, -1.20);
		\draw[<->, black!40, thin] (0.5, -1.18) -- (6.5, -1.18)
			node[midway, below=7pt, xshift=-8pt, font=\scriptsize, black!55]
			{$K=\bigl[\tfrac{1}{2},\;2k+\tfrac{1}{2}\bigr]$};

		\node[figred!75, anchor=south, font=\scriptsize] at (3.5, 1.47)
			{$V_0=\{k,\;k{+}1\}$};
		\draw[->, figred!55, thin, shorten >=2pt]
			(3.28, 1.45) -- (3.04, 0.84);
		\draw[->, figred!55, thin, shorten >=2pt]
			(3.72, 1.45) -- (3.96, 0.84);

		\draw[<->, black!50, thin] (2.84, 0) -- (2.84, 0.833);
		\node[left=2pt, font=\scriptsize, black!60] at (2.82, 0.42)
			{$t(k)=1-\tfrac{1}{2k}$};

		\node[figgreen!65!black, anchor=south, font=\small] at (5.9, 1.20)
			{$H=\{\z\ge k{+}1\}$};
		\node[figgreen!65!black, font=\scriptsize, anchor=east] at (3.66, 0.5) {$\notin H$};
		\node[figgreen!65!black, font=\scriptsize, anchor=west] at (3.96, 0.5) {$\in H$};

		\node[figblue, font=\small] at (0.95, 0.65) {$C$};

		\fill[figred] (4.0, 0.26) circle (2.0pt);
		\node[figred, font=\small, anchor=west] at (4.13, 0.28) {$\y$};
	\end{tikzpicture}
	\caption{Sharpness construction in Theorem~\ref{thm:linear-necessity-total-dim} for $n=d=1$ with $k=3$. The body $C$ (blue) has projection $K=[\tfrac{1}{2},2k+\tfrac{1}{2}]$ onto the $\z$ axis, and the mixed-integer set $S=C\cap(\Z\times\R)$ consists of the fiber intervals (red segments) over $\z\in K\cap\Z$. The two central fibers $V_0=\{k,k+1\}$ (thicker segments) have maximal radius $t(k)=1-\tfrac{1}{2k}$. The halfspace $H=\{\z\ge k+1\}$ (green) contains the fiber at $\z=k+1$ and excludes the equally large fiber at $\z=k$. Because $H$ is closed, the boundary fiber at $z=k+1$ is included.}
	\label{fig:necessity-construction}
\end{figure}

\begin{proof}
	Fix $\alpha>\log 2$, an integer $k\ge 2$, and an integer $m\ge 2(1+\alpha k)$.
	Define
	\[
		n:=\Bigl\lfloor \frac{m}{1+\alpha k}\Bigr\rfloor,\qquad d:=m-n.
	\]
	Then $n\ge 2$ and $d\ge 1$.
	In fact, $(1+\alpha k)n\le m$ implies $d=m-n\ge \alpha k n$.

	Define the center $\c:=(k+\tfrac12)(1,\dots,1)\in\R^n$ and the box
	\[
		K:=\c+k\mathbb{B}_\infty^n=[\tfrac12,\,2k+\tfrac12]^n.
	\]
	Define the function $t:K\to[0,1]$ by
	\[
		t(\z):=1-\frac{1}{k}\|\z-\c\|_\infty.
	\]
	Finally, define the convex body $C\subset\R^{n+d}$ by
	\[
		C:=\set{(\z,\x)\in\R^n\times\R^d:\ \z\in K,\ \|\x\|_2\le t(\z)}.
	\]
	Since $\|\cdot\|_\infty$ is convex, the map $\z\mapsto -\|\z-\c\|_\infty$ is concave, hence $t$ is concave.
	Thus $C$ is convex: if $(\z_i,\x_i)\in C$ and $\lambda\in[0,1]$, then $\z_\lambda:=(1-\lambda)\z_1+\lambda\z_2\in K$ and
	\[
		\|\,(1-\lambda)\x_1+\lambda\x_2\,\|_2
		\le (1-\lambda)\|\x_1\|_2+\lambda\|\x_2\|_2
		\le (1-\lambda)t(\z_1)+\lambda t(\z_2)
		\le t(\z_\lambda),
	\]
	so $\bigl(\z_\lambda,(1-\lambda)\x_1+\lambda\x_2\bigr)\in C$.
	Moreover, since $K$ is compact and $t$ is continuous with $0\le t\le 1$, the set $C$ is compact, and since $t(\c)=1$ and $\c\in \intset(K)$, the set $C$ has nonempty interior.
	Clearly $\proj_{\R^n}(C)=K$, so $K$ contains a translate of $k\mathbb{B}_\infty^n$.

	Let $S:=C\cap(\Z^n\times\R^d)$.
	Write $v_d:=\vol_d(\set{\x\in\R^d:\|\x\|_2\le 1})$.
	For each $\z\in K$, the fiber $C_\z:=\set{\x:(\z,\x)\in C}$ is a Euclidean ball in $\R^d$ of radius $t(\z)$, hence $\vol_d(C_\z)=v_d t(\z)^d$.
	Since $K\cap\Z^n=\set{1,2,\dots,2k}^n$, the mixed-integer volume is
	\[
		\H_d(S)=\sum_{\z\in K\cap\Z^n}\vol_d(C_\z)=v_d\sum_{\z\in K\cap\Z^n} t(\z)^d.
	\]

	Set
	\[
		V_0:=\set{k,k+1}^n\subset\Z^n,\qquad \#V_0=2^n,\qquad T:=\sum_{\substack{\z\in K\cap\Z^n\\\z\notin V_0}} t(\z)^d.
	\]
	For $\z\in\Z^n$, each coordinate $z_i-(k+\tfrac12)$ is an integer plus $\tfrac12$, so $\|\z-\c\|_\infty\in\{\tfrac12,\tfrac32,\tfrac52,\dots\}$.
	The points of $K\cap\Z^n$ with $\|\z-\c\|_\infty=\tfrac12$ are exactly $V_0$, and for all $\z\in V_0$ one has $t(\z)=1-\frac{1}{2k}$.
	Therefore,
	\begin{equation}\label{eq:HdS-split-proof}
		\H_d(S)=v_d\left(2^n\Bigl(1-\frac{1}{2k}\Bigr)^d+T\right).
	\end{equation}

	Fix $\y=(\z,\x)\in S$. We construct a closed halfspace $H\subset\R^{n+d}$ with $\y\in H$ such that $H$ intersects $V_0\times\R^d$ in at most one integer fiber.
	If $\z=\v\in V_0$, define $\sigma(\v)\in\{\pm 1\}^n$ by $\sigma(\v)_i:=2(\v_i-k)-1$ for $i\in\{1,\dots,n\}$ and let
	\[
		H:=H_\v:=\set{(\z,\x)\in\R^{n+d}:\ \sigma(\v)^\T \z \ge \sigma(\v)^\T \v}.
	\]
	Then $\y\in H$. Since $\sigma(\v)_i = +1$ when $\v_i = k+1$ and $\sigma(\v)_i = -1$ when $\v_i = k$, the point $\v$ is the unique maximizer of $\z\mapsto \sigma(\v)^\T\z$ over $V_0$. Hence $H$ intersects $V_0\times\R^d$ in exactly one fiber and
	\[
		\H_d(S\cap H)\le v_d\left(\Bigl(1-\frac{1}{2k}\Bigr)^d+T\right).
	\]
	If instead $\z\notin V_0$, then some coordinate $\z_i\notin\{k,k+1\}$, so $\z_i\le k-1$ or $\z_i\ge k+2$.
	In the first case take $H:=\set{(\z,\x)\in\R^{n+d}:\ \z_i\le k-1}$ and in the second take $H:=\set{(\z,\x)\in\R^{n+d}:\ \z_i\ge k+2}$.
	In either case $\y\in H$ and $H\cap(V_0\times\R^d)=\emptyset$, hence $\H_d(S\cap H)\le v_dT$.
	Since $h_S(\y)\le \H_d(S\cap H)$ by definition, it follows that for all $\y\in S$,
	\begin{equation}\label{eq:uniform-depth-upper}
		h_S(\y)\ \le\ v_d\left(\Bigl(1-\frac{1}{2k}\Bigr)^d+T\right).
	\end{equation}

	Combining \eqref{eq:HdS-split-proof} and \eqref{eq:uniform-depth-upper} yields
	\[
		\sup_{\y\in S}\frac{h_S(\y)}{\H_d(S)}
		\le
		\frac{\Bigl(1-\frac{1}{2k}\Bigr)^d+T}{2^n\Bigl(1-\frac{1}{2k}\Bigr)^d+T}
		\le
		2^{-n}+\frac{T}{2^n\Bigl(1-\frac{1}{2k}\Bigr)^d}.
	\]

	To bound $\frac{T}{2^n\left(1-\frac{1}{2k}\right)^d}$, for $j\in\{0,1,\dots,k-1\}$ define the shells
	\[
		V_j:=\Bigl\{\z\in K\cap\Z^n:\ \|\z-\c\|_\infty=\frac{2j+1}{2}\Bigr\}.
	\]
	Then $(K\cap\Z^n)\setminus V_0=\bigcup_{j=1}^{k-1}V_j$ and for $\z\in V_j$ one has
	$t(\z)=1-\frac{2j+1}{2k}$, hence
	\[
		\frac{t(\z)}{1-\frac{1}{2k}}
		=\frac{1-\frac{2j+1}{2k}}{1-\frac{1}{2k}}
		=\frac{2k-2j-1}{2k-1}
		=1-\frac{2j}{2k-1}.
	\]
	Moreover, $V_j$ is contained in the cube $\{\z\in\Z^n:\ \|\z-\c\|_\infty\le \frac{2j+1}{2}\}$, whose cardinality is at most $(2j+2)^n$.
	Therefore, $\#V_j/2^n\le (j+1)^n$.
	Using $1-x \leq e^{-x}$ for any $x \in \R$ and $d\ge \alpha k n$, we obtain
	\[
		\Bigl(\frac{t(\z)}{1-\frac{1}{2k}}\Bigr)^d
		\le \exp\!\Bigl(-\frac{2j}{2k-1}\,d\Bigr)
		\le \exp(-\alpha j n),
		\qquad \z\in V_j.
	\]
	Therefore,
	\[
		\frac{T}{2^n\left(1-\frac{1}{2k}\right)^d}
		= \sum_{j=1}^{k-1}\frac{\#V_j}{2^n}\left(1-\frac{2j}{2k-1}\right)^d
		\le \sum_{j=1}^{k-1}(j+1)^n e^{-\alpha j n}.
	\]
	Since $j+1\le 2^j$ for all $j\ge 1$, the summand is at most $2^{jn}e^{-\alpha j n}=e^{-(\alpha-\log 2)jn}$.
	Thus
	\begin{equation}\label{eq:shell-geometric}
		\frac{T}{2^n\left(1-\frac{1}{2k}\right)^d}
		\le
		\sum_{j=1}^{\infty}e^{-(\alpha-\log 2)jn}
		= \frac{e^{-(\alpha-\log 2)n}}{1-e^{-(\alpha-\log 2)n}}
		\le \frac{e^{-(\alpha-\log 2)n}}{1-e^{-(\alpha-\log 2)}}.
	\end{equation}
	It follows that
	\begin{equation}\label{eq:ratio-n-bound}
		\sup_{\y\in S}\frac{h_S(\y)}{\H_d(S)}
		\le
		2^{-n}+\frac{e^{-(\alpha-\log 2)n}}{1-e^{-(\alpha-\log 2)}}.
	\end{equation}

	To rewrite \eqref{eq:ratio-n-bound} in terms of $m$ and $k$, note that $m\ge 2(1+\alpha k)$ implies
	\[
		n=\Bigl\lfloor \frac{m}{1+\alpha k}\Bigr\rfloor \ge \frac{m}{1+\alpha k}-1\ge \frac{m}{2(1+\alpha k)}.
	\]
	Set $M_\alpha:=1+\frac{1}{1-e^{-(\alpha-\log 2)}}$.
	Then \eqref{eq:ratio-n-bound} gives
	\[
		\begin{aligned}
			\sup_{\y\in S}\frac{h_S(\y)}{\H_d(S)}
			&\le M_\alpha\exp\!\bigl(-\min\{\log 2,\ \alpha-\log 2\}\,n\bigr) \\
			&\le M_\alpha\exp\!\Bigl(-\min\{\log 2,\ \alpha-\log 2\}\,\frac{m}{2(1+\alpha k)}\Bigr).
		\end{aligned}
	\]
	Since $1+\alpha k\le (\alpha+1)k$ for $k\ge 1$, the right hand side is at most
	\[
		M_\alpha\exp\!\Bigl(-\frac{\min\{\log 2,\ \alpha-\log 2\}}{2(\alpha+1)}\,\frac{m}{k}\Bigr).
	\]
	This proves \eqref{eq:exp-upper} with
	\[
		\gamma_\alpha:=\frac{\min\{\log 2,\ \alpha-\log 2\}}{2(\alpha+1)},
		\qquad
		M_\alpha:=1+\frac{1}{1-e^{-(\alpha-\log 2)}}.
	\]
\end{proof}

\begin{corollary}\label{cor:linear-necessity-sublinear}
	Let $\eta\in(0,1)$ and let $\kappa:\Z_{>0}\to(0,\infty)$ satisfy $\kappa(m)=o(m)$.
	Then there exists an integer $m_0\ge 1$ such that for every integer $m\ge m_0$ there exist integers $n,d\ge 1$ with $n+d=m$ and a convex body $C\subset\R^{n+d}$ such that, writing $S:=C\cap(\Z^n\times\R^d)$, the projection $\proj_{\R^n}(C)$ contains a translate of $\kappa(m)\mathbb{B}_\infty^n$ but
	\[
		\sup_{\y\in S} h_S(\y)\ \le\ \eta\,\H_d(S).
	\]
	Equivalently, there is no sublinear function $\kappa(m)=o(m)$ that can serve as a threshold based only on the dimension that guarantees a dimension-free constant fraction lower bound for mixed-integer halfspace depth.
\end{corollary}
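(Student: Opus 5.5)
We will derive Corollary~\ref{cor:linear-necessity-sublinear} directly from Theorem~\ref{thm:linear-necessity-total-dim} by fixing one admissible value of $\alpha$, say $\alpha:=1>\log 2$, which yields constants $\gamma:=\gamma_1>0$ and $M:=M_1>0$. The plan is to apply the theorem with the integer $k(m):=\max\{2,\lceil \kappa(m)\rceil\}$. Since $k(m)\ge\kappa(m)$, a translate of $k(m)\mathbb{B}_\infty^n$ contained in the projection contains a translate of $\kappa(m)\mathbb{B}_\infty^n$, because $\kappa(m)\mathbb{B}_\infty^n\subseteq k(m)\mathbb{B}_\infty^n$. So the box hypothesis of the corollary comes for free from the theorem's conclusion.

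It remains to choose $m_0$ so that two conditions hold for every $m\ge m_0$. The first is the admissibility requirement of the theorem, $m\ge 2(1+\alpha k(m))=2+2k(m)$. The second is the smallness requirement $M\exp(-\gamma m/k(m))\le\eta$, i.e.\ $m/k(m)\ge \gamma^{-1}\log(M/\eta)$ (note $M/\eta>1$, so this threshold is positive). Both are statements that $m/k(m)$ is large, so the key step is to show $m/k(m)\to\infty$. Since $\kappa(m)=o(m)$, we have $\lceil\kappa(m)\rceil\le\kappa(m)+1=o(m)$ and $2=o(m)$, hence $k(m)=o(m)$.

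Concretely, let $A:=\max\{4,\ \gamma^{-1}\log(M/\eta)\}$ and pick $m_0$ with $k(m)\le m/A$ for all $m\ge m_0$. Then $m\ge 4k(m)\ge 2+2k(m)$, using $k(m)\ge 2$, and also $m/k(m)\ge A$. Theorem~\ref{thm:linear-necessity-total-dim} now provides $n,d\ge1$ with $n+d=m$ and a convex body $C$ such that $\sup_{\y\in S}h_S(\y)\le M e^{-\gamma m/k(m)}\H_d(S)\le\eta\,\H_d(S)$.

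The only mildly delicate point is handling the integrality of $k$ and the constraint $k\ge2$ in the theorem, which the ceiling and the maximum take care of; there is no real obstacle. The final ``equivalently'' sentence is just a restatement: any purported sublinear threshold $\kappa$ with a guaranteed constant fraction $\eta$ would be contradicted by these bodies. Taking $\eta$ smaller than the claimed constant makes this explicit.
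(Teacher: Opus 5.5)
Your proposal is correct and follows the paper's proof essentially verbatim: fix an admissible $\alpha$ (you take $\alpha=1$), set $k:=\max\{2,\lceil\kappa(m)\rceil\}$, and use $m/k\to\infty$ to choose $m_0$ so that both $m\ge 2(1+\alpha k)$ and $M_\alpha e^{-\gamma_\alpha m/k}\le\eta$ hold. Then invoke Theorem~\ref{thm:linear-necessity-total-dim} and pass from $k\mathbb{B}_\infty^n$ to $\kappa(m)\mathbb{B}_\infty^n$ via $\kappa(m)\le k$. Your explicit threshold $A=\max\{4,\gamma^{-1}\log(M/\eta)\}$ simply makes the paper's choice of $m_0$ concrete.
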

\begin{proof}
	Choose any $\alpha>\log 2$ and let $\gamma_\alpha,M_\alpha$ be the constants from Theorem~\ref{thm:linear-necessity-total-dim}.
	Since $\kappa(m)=o(m)$, we have $m/\kappa(m)\to \infty$.
	Thus, there exists an integer $m_0\ge 1$ such that for every integer $m\ge m_0$, the integer $k:=\max\{2,\lceil \kappa(m)\rceil\}$ satisfies $m\ge 2(1+\alpha k)$ and $M_\alpha\exp(-\gamma_\alpha m/k)\le \eta$.
	Apply Theorem~\ref{thm:linear-necessity-total-dim} with this $(m,k)$ to obtain integers $n,d\ge 1$ with $n+d=m$ and a convex body $C\subset\R^{n+d}$ such that $\proj_{\R^n}(C)$ contains a translate of $k\mathbb{B}_\infty^n$.
	Since $\kappa(m)\le k$, the projection also contains a translate of $\kappa(m)\mathbb{B}_\infty^n$.
	Finally, \eqref{eq:exp-upper} implies $\sup_{\y\in S} h_S(\y)/\H_d(S)\le \eta$.
\end{proof}

Together with Theorem~\ref{thm:main}, this shows that the dependence on the total dimension in the box-radius assumption cannot be reduced to $o(n+d)$ under this type of hypothesis.

\section{Conclusion}\label{sec:conclusions}

The main contribution of this paper is a linear threshold for Oertel's conjecture on mixed-integer halfspace depth. Under the assumption that the projection $K=\proj_{\R^n}(C)$ contains a translate of $k\mathbb{B}_\infty^n$ with $k\ge 3e(n+d)$, we have shown the existence of a point in $S=C\cap(\Z^n\times\R^d)$ achieving depth at least $\frac{1}{2e}\H_d(S)$, which implies the conjectured $\frac{1}{2^n e}$ bound. Our condition scales linearly in the total dimension and linearly in $d$, improving upon prior thresholds that depend exponentially on $n$ or polynomially as $d^2 n^{3/2}$. We have also shown that a linear threshold cannot be relaxed: no sublinear radius $\kappa(m)=o(m)$ can guarantee a dimension independent constant fraction lower bound.

The most natural direction is to investigate whether Conjecture~\ref{conj:oertel} holds for every convex body $C\subset\R^{n+d}$ without any largeness assumption on the projection. The authors do not see an obvious approach to this question and it seems to require fundamentally new ideas. There also seems to be scope for improving the quantitative bounds: Theorem~\ref{thm:main} guarantees depth $\frac{1}{e}-\Theta((n+d)/k)$ for large $k$, while Theorem~\ref{thm:linear-necessity-total-dim} constructs sets with depth $\exp(-\Theta(m/k))$. Closing this gap and determining the exact asymptotic dependence on $(n,d,k)$ remain open.

Finally, the point $\y^\star$ in Theorem~\ref{thm:main} arises from centroid rounding and its efficient computation is unclear. Can one approximate such a point from standard oracles for $C$? More broadly, can one design a polynomial time cutting plane procedure that provably removes a constant fraction of mixed-integer volume per iteration? Progress on these questions would have direct implications for the complexity theory of mixed-integer convex optimization.

\renewcommand{\ackname}{Acknowledgments and Disclosure of Funding}
\begin{acknowledgements}
Both authors gratefully acknowledge support from the Air Force Office of Scientific Research (AFOSR) grant FA9550-25-1-0038. The first author also received support from a MINDS Fellowship awarded by the Mathematical Institute for Data Science (MINDS) at Johns Hopkins University.
\end{acknowledgements}

\bibliographystyle{spbasic}
\bibliography{references}

\end{document}